\newcommand{\nc}{\newcommand}
\newcommand{\delete}[1]{}
\nc{\mlabel}[1]{\label{#1}}  
\nc{\mcite}[1]{\cite{#1}}  
\nc{\mref}[1]{\ref{#1}}  
\nc{\mbibitem}[1]{\bibitem{#1}} 
\nc{\mlabel}[1]{\label{#1}  
{\hfill \hspace{1cm}{\small\tt{{\ }\hfill(#1)}}}}
\nc{\mcite}[1]{\cite{#1}{\small{\tt{{\ }(#1)}}}}  
\nc{\mref}[1]{\ref{#1}{{\tt{{\ }(#1)}}}}  
\nc{\mbibitem}[1]{\bibitem[\bf #1]{#1}} 
\newtheorem{theorem}{Theorem}[section]
\newtheorem{prop}[theorem]{Proposition}
\newtheorem{lemma}[theorem]{Lemma}
\theoremstyle{definition}
\newtheorem{defn}[theorem]{Definition}
\newtheorem{prop-def}{Proposition-Definition}[section]
\newtheorem{remark}[theorem]{Remark}
\newtheorem{tempex}[theorem]{Example}
\newtheorem{tempexs}[theorem]{Examples}
\newtheorem{temprmk}[theorem]{Remark}
\newtheorem{tempexer}{Exercise}[section]
\nc{\vsa}{\vspace{-.1cm}} \nc{\vsb}{\vspace{-.2cm}}
\nc{\vsc}{\vspace{-.3cm}} \nc{\vsd}{\vspace{-.4cm}}
\nc{\vse}{\vspace{-.5cm}}
\nc{\Irr}{\mathrm{Irr}}
\nc{\ncrbw}{\calr}  
\nc{\NS}{U_{NS}}
\nc{\FN}{F_{\mathrm Nij}}
\nc{\dfgen}{V} \nc{\dfrel}{R}
\nc{\dfgenb}{\vec{v}} \nc{\dfrelb}{\vec{r}}
\nc{\dfgene}{v} \nc{\dfrele}{r}
\nc{\dfop}{\odot}
\nc{\dfoa}{\dfop^{(1)}} \nc{\dfob}{\dfop^{(2)}}
\nc{\dfoc}{\dfop^{(3)}} \nc{\dfod}{\dfop^{(4)}}
\nc{\mapm}[1]{\lfloor\!|{#1}|\!\rfloor}
\nc{\cmapm}[1]{\frakC(#1)}
\nc{\red}{\mathrm{Red}}
\nc{\cm}{C}
\nc{\supp}{\mathrm{Supp}}
\nc{\lex}{\mathrm{lex}}
\nc{\disp}[1]{\displaystyle{#1}}
\nc{\bin}[2]{ (_{\stackrel{\scs{#1}}{\scs{#2}}})}  
\nc{\binc}[2]{ \left (\!\! \begin{array}{c} \scs{#1}\\
    \scs{#2} \end{array}\!\! \right )}  
\nc{\bincc}[2]{  \left ( {\scs{#1} \atop
    \vspace{-.5cm}\scs{#2}} \right )}  
\nc{\sarray}[2]{\begin{array}{c}#1 \vspace{.1cm}\\ \hline
    \vspace{-.35cm} \\ #2 \end{array}}
\nc{\bs}{\bar{S}} \nc{\ep}{\epsilon}
\nc{\dbigcup}{\stackrel{\bullet}{\bigcup}}
\nc{\la}{\longrightarrow} \nc{\cprod}{\ast} \nc{\rar}{\rightarrow}
\nc{\dar}{\downarrow} \nc{\labeq}[1]{\stackrel{#1}{=}}
\nc{\dap}[1]{\downarrow \rlap{$\scriptstyle{#1}$}}
\nc{\uap}[1]{\uparrow \rlap{$\scriptstyle{#1}$}}
\nc{\defeq}{\stackrel{\rm def}{=}} \nc{\dis}[1]{\displaystyle{#1}}
\nc{\dotcup}{\ \displaystyle{\bigcup^\bullet}\ }
\nc{\sdotcup}{\tiny{ \displaystyle{\bigcup^\bullet}\ }}
\nc{\fe}{\'{e}}
\nc{\hcm}{\ \hat{,}\ } \nc{\hcirc}{\hat{\circ}}
\nc{\hts}{\hat{\shpr}} \nc{\lts}{\stackrel{\leftarrow}{\shpr}}
\nc{\denshpr}{\den{\shpr}}
\nc{\rts}{\stackrel{\rightarrow}{\shpr}} \nc{\lleft}{[}
\nc{\lright}{]} \nc{\uni}[1]{\tilde{#1}} \nc{\free}[1]{\bar{#1}}
\nc{\freea}[1]{\tilde{#1}} \nc{\freev}[1]{\hat{#1}}
\nc{\dt}[1]{\hat{#1}}
\nc{\wor}[1]{\check{#1}}
\nc{\intg}[1]{F_C(#1)}
\nc{\den}[1]{\check{#1}} \nc{\lrpa}{\wr} \nc{\mprod}{\pm}
\nc{\dprod}{\ast_P} \nc{\curlyl}{\left \{ \begin{array}{c} {} \\
{} \end{array}
    \right .  \!\!\!\!\!\!\!}
\nc{\curlyr}{ \!\!\!\!\!\!\!
    \left . \begin{array}{c} {} \\ {} \end{array}
    \right \} }
\nc{\longmid}{\left | \begin{array}{c} {} \\ {} \end{array}
    \right . \!\!\!\!\!\!\!}
\nc{\lin}{\call} \nc{\ot}{\otimes}
\nc{\ora}[1]{\stackrel{#1}{\rar}}
\nc{\ola}[1]{\stackrel{#1}{\la}}
\nc{\scs}[1]{\scriptstyle{#1}} \nc{\mrm}[1]{{\rm #1}}
\nc{\margin}[1]{\marginpar{\rm #1}}   
\nc{\dirlim}{\displaystyle{\lim_{\longrightarrow}}\,}
\nc{\invlim}{\displaystyle{\lim_{\longleftarrow}}\,}
\nc{\mvp}{\vspace{0.5cm}}
\nc{\mult}{m}       
\nc{\svp}{\vspace{2cm}} \nc{\vp}{\vspace{8cm}}
\nc{\proofbegin}{\noindent{\bf Proof: }}
\nc{\proofend}{$\blacksquare$ \vspace{0.5cm}}
\nc{\sha}{{\mbox{\cyr X}}}  
\nc{\ncsha}{{\mbox{\cyr X}^{\mathrm NC}}}
\newfont{\scyr}{wncyr10 scaled 550}
\nc{\ssha}{\mbox{\bf \scyr X}}
\nc{\ncshao}{{\mbox{\cyr X}^{\mathrm NC,\,0}}}
\nc{\shpr}{\diamond}    
\nc{\shprc}{\shpr_c}
\nc{\shpro}{\diamond^0}    
\nc{\shpru}{\check{\diamond}} \nc{\spr}{\cdot}
\nc{\catpr}{\diamond_l} \nc{\rcatpr}{\diamond_r}
\nc{\lapr}{\diamond_a} \nc{\lepr}{\diamond_e} \nc{\sprod}{\bullet}
\nc{\un}{u}                 
\nc{\vep}{\varepsilon} \nc{\labs}{\mid\!} \nc{\rabs}{\!\mid}
\nc{\hsha}{\widehat{\sha}} \nc{\psha}{\sha^{+}} \nc{\tsha}{\tilde{\sha}}
\nc{\lsha}{\stackrel{\leftarrow}{\sha}}
\nc{\rsha}{\stackrel{\rightarrow}{\sha}} \nc{\lc}{\lfloor}
\nc{\rc}{\rfloor} \nc{\sqmon}[1]{\langle #1\rangle}
\nc{\altx}{\Lambda} \nc{\vecT}{\vec{T}} \nc{\piword}{{\mathfrak P}}
\nc{\lbar}[1]{\overline{#1}}
\nc{\dep}{\mathrm{dep}}
\nc{\mmbox}[1]{\mbox{\ #1\ }}
\nc{\ayb}{\mrm{AYB}} \nc{\mayb}{\mrm{mAYB}} \nc{\cyb}{\mrm{cyb}}
\nc{\ann}{\mrm{ann}} \nc{\Aut}{\mrm{Aut}} \nc{\cabqr}{\mrm{CABQR
}} \nc{\can}{\mrm{can}} \nc{\colim}{\mrm{colim}}
\nc{\Cont}{\mrm{Cont}} \nc{\rchar}{\mrm{char}}
\nc{\cok}{\mrm{coker}} \nc{\dtf}{{R-{\rm tf}}} \nc{\dtor}{{R-{\rm
tor}}}
\nc{\Div}{{\mrm Div}} \nc{\End}{\mrm{End}} \nc{\Ext}{\mrm{Ext}}
\nc{\FG}{\mrm{FG}} \nc{\Fil}{\mrm{Fil}} \nc{\Frob}{\mrm{Frob}}
\nc{\Gal}{\mrm{Gal}} \nc{\GL}{\mrm{GL}} \nc{\Hom}{\mrm{Hom}}
\nc{\hsr}{\mrm{H}} \nc{\hpol}{\mrm{HP}} \nc{\id}{\mrm{id}} \nc{\Id}{\mathrm{Id}}  \nc{\ID}{\mathrm{ID}}
\nc{\im}{\mrm{im}} \nc{\incl}{\mrm{incl}} \nc{\Loday}{\mrm{ABQR}\
} \nc{\length}{\mrm{length}} \nc{\LR}{\mrm{LR}} \nc{\mchar}{\rm
char} \nc{\pmchar}{\partial\mchar} \nc{\map}{\mrm{Map}}
\nc{\MS}{\mrm{MS}} \nc{\OS}{\mrm{OS}} \nc{\NC}{\mrm{NC}}
\nc{\rba}{\rm{Rota-Baxter algebra}\xspace}
\nc{\rbas}{\rm{Rota-Baxter algebras}\xspace}
\nc{\rbw}{\ncrbw}
\nc{\rbws}{\rm{RBWs}\xspace}
\nc{\rbadj}{\rm{RB}\xspace}
\nc{\mpart}{\mrm{part}} \nc{\ql}{{\QQ_\ell}} \nc{\qp}{{\QQ_p}}
\nc{\rank}{\mrm{rank}} \nc{\rcot}{\mrm{cot}} \nc{\rdef}{\mrm{def}}
\nc{\rdiv}{{\rm div}} \nc{\rtf}{{\rm tf}} \nc{\rtor}{{\rm tor}}
\nc{\res}{\mrm{res}} \nc{\SL}{\mrm{SL}} \nc{\Spec}{\mrm{Spec}}
\nc{\tor}{\mrm{tor}} \nc{\Tr}{\mrm{Tr}}
\nc{\mtr}{\mrm{tr}}
\nc{\ab}{\mathbf{Ab}} \nc{\Alg}{\mathbf{Alg}}
\nc{\Bax}{\mathbf{CRB}} \nc{\Algo}{\mathbf{Alg}^0}
\nc{\cRB}{\mathbf{CRB}} \nc{\cRBo}{\mathbf{CRB}^0}
\nc{\RBo}{\mathbf{RB}^0} \nc{\BRB}{\mathbf{RB}}
\nc{\Dend}{\mathbf{DD}} \nc{\bfk}{{\bf k}} \nc{\bfone}{{\bf 1}}
\nc{\base}[1]{{a_{#1}}} \nc{\Cat}{\mathbf{Cat}}
 \nc{\DN}{\mathbf{DN}}
\nc{\NA}{\mathbf{NA}}
\nc{\SDN}{\mathbf{SDN}}
\nc{\Diff}{\mathbf{Diff}} \nc{\gap}{\marginpar{\bf
Incomplete}\noindent{\bf Incomplete!!}
    \svp}
\nc{\FMod}{\mathbf{FMod}} \nc{\Int}{\mathbf{Int}}
\nc{\Mon}{\mathbf{Mon}}
\nc{\RB}{\mathbf{RB}} \nc{\remarks}{\noindent{\bf Remarks: }}
\nc{\Rep}{\mathbf{Rep}} \nc{\Rings}{\mathbf{Rings}}
\nc{\Sets}{\mathbf{Sets}} \nc{\bfx}{\mathbf{x}}
\nc{\BA}{{\Bbb A}} \nc{\CC}{{\Bbb C}} \nc{\DD}{{\Bbb D}}
\nc{\EE}{{\Bbb E}} \nc{\FF}{{\Bbb F}} \nc{\GG}{{\Bbb G}}
\nc{\HH}{{\Bbb H}} \nc{\LL}{{\Bbb L}} \nc{\NN}{{\Bbb N}}
\nc{\QQ}{{\Bbb Q}} \nc{\RR}{{\Bbb R}} \nc{\TT}{{\Bbb T}}
\nc{\VV}{{\Bbb V}} \nc{\ZZ}{{\Bbb Z}}
\nc{\cala}{{\mathcal A}} \nc{\calb}{{\mathcal B}}
\nc{\calc}{{\mathcal C}}
\nc{\cald}{{\mathcal D}} \nc{\cale}{{\mathcal E}}
\nc{\calf}{{\mathcal F}} \nc{\calg}{{\mathcal G}}
\nc{\calh}{{\mathcal H}} \nc{\cali}{{\mathcal I}}
\nc{\calj}{{\mathcal J}} \nc{\call}{{\mathcal L}}
\nc{\calm}{{\mathcal M}} \nc{\caln}{{\mathcal N}}
\nc{\calo}{{\mathcal O}} \nc{\calp}{{\mathcal P}}
\nc{\calr}{{\mathcal R}} \nc{\cals}{{\mathcal S}} \nc{\calt}{{\mathcal T}}
\nc{\calw}{{\mathcal W}} \nc{\calx}{{\mathcal X}} \nc{\caly}{{\mathcal Y}} \nc{\calz}{{\mathcal Z}}
\nc{\CA}{\mathcal{A}}
\nc{\frakA}{{\mathfrak A}}
\nc{\fraka}{{\mathfrak a}}
\nc{\frakB}{{\mathfrak B}}
\nc{\frakb}{{\mathfrak b}}
\nc{\frakC}{{\mathfrak C}}
\nc{\frakd}{{\mathfrak d}}
\nc{\frakF}{{\mathfrak F}}
\nc{\frakg}{{\mathfrak g}}
\nc{\frakm}{{\mathfrak m}}
\nc{\frakM}{{\mathfrak M}}
\nc{\frakMo}{{\mathfrak M}^0}
\nc{\frakP}{{\mathfrak P}}
\nc{\frakp}{{\mathfrak p}}
\nc{\frakS}{{\mathfrak S}}
\nc{\frakSo}{{\mathfrak S}^0}
\nc{\fraks}{{\mathfrak s}}
\nc{\os}{\overline{\fraks}}
\nc{\frakT}{{\mathfrak T}}
\nc{\frakTo}{{\mathfrak T}^0}
\nc{\oT}{\overline{T}}
\nc{\frakX}{{\mathfrak X}}
\nc{\frakXo}{{\mathfrak X}^0}
\nc{\frakx}{{\mathbf x}}
\nc{\frakTx}{\frakT}      
\nc{\frakTa}{\frakT^a}        
\nc{\frakTxo}{\frakTx^0}   
\nc{\caltao}{\calt^{a,0}}   
\nc{\ox}{\overline{\frakx}} \nc{\fraky}{{\mathfrak y}}
\nc{\frakz}{{\mathfrak z}} \nc{\oX}{\overline{X}} \font\cyr=wncyr10
\nc{\tred}[1]{\textcolor{red}{#1}} \nc{\tgreen}[1]{\textcolor{green}{#1}}
\nc{\tblue}[1]{\textcolor{blue}{#1}} \nc{\tpurple}[1]{\textcolor{purple}{#1}}
\nc{\li}[1]{\tpurple{\underline{Li:}#1 }}
\nc{\liadd}[1]{\tpurple{#1}}
\nc{\xing}[1]{\tblue{\underline{Xing:}#1 }}
\nc{\markus}[1]{\tred{\underline{Markus:} #1}}
\nc{\dnx}{\Delta_n X} \nc{\dx}{\Delta X} \nc{\dgp}{{\rm deg_{P}}}
\nc{\dgt}{{\rm deg_{T}}} \nc{\dg}{{\rm deg}} \nc{\ida}{ID($A$)} \nc{\tu}{\tilde{u}} \nc{\tv}{\tilde{v}}
\nc{\nr}{\calr_n} \nc{\nz}{\calz_n} \nc{\fun}{\cala_{n,d}}
 \nc{\fbase}{\calb} \nc{\LF}{\mathrm{RF}} \nc{\FFA}{\mathrm{LF}} \nc{\irr}{\mathrm{Irr}}
 \nc{\result}{\bfk\mathrm{Irr}(S_n)}  \nc{\I}{I_{\mathrm{ID},n}^0}
 \nc{\nrs}{\calr_n^\star} \nc{\ii}{\mathrm{I}} \nc{\iii}{\mathrm{II}}
\nc{\ws}[1]{{#1}}
\nc{\deleted}[1]{\delete{#1}}
\nc{\plas}{placements\xspace}
\nc{\bvp}[2]{\boxed{\begin{array}{l}#1\\#2\end{array}}}
\nc{\evl}{E}
\nc{\cum}{{\textstyle \varint}}
\begin{document}

\title[Free integro-differential algebras]{Free integro-differential algebras and Gr\"{o}bner-Shirshov bases}

\author{Xing Gao}
\address{School of Mathematics and Statistics,
Key Laboratory of Applied Mathematics and Complex Systems,
Lanzhou University, Lanzhou, Gansu, 730000, P.R. China}
\email{gaoxing@lzu.edu.cn}
\author{Li Guo}
\address{
Department of Mathematics and Computer Science,
Rutgers University,
Newark, NJ 07102, USA}
\email{liguo@rutgers.edu}

\author{Markus Rosenkranz}
\address{
 	School of Mathematics, Statistics and Actuarial Science,
	University of Kent,
	Canterbury CT2 7NF, England}
\email{M.Rosenkranz@kent.ac.uk}

\date{\today}

\begin{abstract}
  The notion of commutative integro-differential algebra was
  introduced for the algebraic study of boundary problems for linear
  ordinary differential equations. Its noncommutative analog achieves
  a similar purpose for linear systems of such equations. In both
  cases, free objects are crucial for analyzing the underlying
  algebraic structures, e.g.\@ of the (matrix) functions.

  In this paper we apply the method of Gr\"obner-Shirshov bases to
  construct the free (noncommutative) integro-differential algebra on
  a set. The construction is from the free Rota-Baxter algebra on the
  free differential algebra on the set modulo the differential
  Rota-Baxter ideal generated by the noncommutative integration by
  parts formula. In order to obtain a canonical basis for this quotient, we first reduce to the case when the set is
  finite. Then in order to obtain the monomial order needed for the
  Composition-Diamond Lemma, we consider the free Rota-Baxter algebra on the truncated free differential algebra. A
  Composition-Diamond Lemma is proved in this context, and a
  Gr\"obner-Shirshov basis is found for the corresponding differential Rota-Baxter ideal.
\end{abstract}

\keywords{
Integro-differential algebra, free objects, Gr\"obner-Shirshov bases, Rota-Baxter algebra, differential Rota-Baxter algebra.
}

\maketitle

\tableofcontents

\setcounter{section}{0}

\allowdisplaybreaks

\section{Introduction}

\subsection{Commutative Setting}

An \emph{integro-differential algebra}~$(R,d,P)$ is an algebraic abstraction of
the familiar setting of calculus, where one employs a notion of
differentiation~$d$ together with a notion of integration~$P$ on some (real or
complex) algebra of functions.

For understanding the motivation behind this abstraction, let us first consider
the~$(R,d)$. This is the familiar setting of \emph{differential algebra} as set
up in the work of Ritt~\cite{Ri1,Ri2} and Kolchin~\cite{Ko}. The idea is to
capture the structure of (polynomially) nonlinear differential equations from a
purely algebraic viewpoint. If one speaks of solutions in this context, one
usually means elements in a suitable differential field~$\bar{R}$
extending~$R$. In particular, in differential Galois theory, an ``integral''
of~$f \in R$ is taken as an element~$u \in \bar{R}$ such that~$d(u) = f$.

In applications, however, differential equations often come together with
\emph{boundary conditions} (for simplicity here we include also initial
conditions under this term). Incorporating these into the algebraic model
requires some modifications: Assuming every~$f \in R$ has an integral~$u \in R$,
the condition~$d(u) = f$ becomes~$d \circ P = 1_R$, and it is natural to assume
that the operator~$P\colon f \mapsto u$ is linear. In the standard setting~$R =
C^\infty(\RR)$ we have~$d(u) = u'$ and~$P(f) = \cum_a^x f(\xi) \, d\xi$ for some
initial point~$a \in \RR$. This leads us to expect some further properties
of~$P$:
\begin{itemize}
\item The Fundamental Theorem of Calculus tells us that~$P$ is a right inverse
  of~$d$, as noted above. But it also tells us that~$P$ is \emph{not} a left
  inverse; rather, we have~$P \circ d = 1_R - \evl_a$ in the standard setting,
  where~$\evl_a$ is the \emph{evaluation}~$u \mapsto u(a)$. Note that~$\evl_a$ is
  a multiplicative functional on~$R$.
\item Just like~$d$ satisfies the product rule (also known as the Leibniz law),
  so $P$ satisfies the well-known \emph{integration by parts} rule. In its
  strong form, this is the rule~$P(fd(g)) = fg - P(d(f)g) - \evl(f)\evl(g)$; in
  its weak form it is given by~$P(f)P(g) = P(f P(g)) + P(P(f) g)$. Both can be
  verified immediately in the standard setting; for their distinction in general
  see below.
\end{itemize}
We will now explain briefly why both of these properties are instrumental for
treating \emph{boundary problems} (differential equations with boundary
conditions) on an algebraic level. We restrict ourselves to the classical case
of two-point boundary problems for a linear ordinary differential equations. For
this and the more general setting of Stieltjes boundary conditions, we refer
to~\cite{RR}.

If~$R$ is an arbitrary~$\bfk$-algebra, we can define an \emph{evaluation} as a
multiplicative linear functional~$R \to \bfk$. In the case of a two-point
boundary problem over~$[a,b] \subset \RR$, one will have two
evaluations~$\evl_a\colon u \mapsto u(a)$ and~$\evl_b\colon u \mapsto u(b)$. A
boundary condition like~$2u(a) - 3u'(a) + u'(b) = 0$ then translates
to~$\beta(u) = 0$ with the linear functional~$\beta = 2\evl_a - 3 \evl_a d +
\evl_b d$.

We can now define a general boundary problem over~$(R,d, \evl_a, \evl_b)$ as the
task of finding for given~$f \in R$ the solution $u \in R$ of
\begin{equation*}
  \label{eq:bvp}
  \bvp{Tu=f,}{\beta_1(u) = \cdots = \beta_n(u) = 0,}
\end{equation*}
where~$T \in R[d]$ is a monic linear differential operator of order~$n$ and the
boundary conditions~$\beta_i$ are linear functionals built from~$d$ and the
evaluations~$\evl_a, \evl_b$ as above, with differentiation order below~$n$. We
call the boundary problem~\eqref{eq:bvp} \emph{regular} if there is a unique
solution~$u \in R$ for every~$f \in R$. In this case, the association~$f \mapsto
u$ gives rise to linear map~$G\colon R \to R$ known as the \emph{Green's
  operator} of~\eqref{eq:bvp}.

It turns out~\cite[Thm.~26]{RR} that the Green's operator~$G$ of~\eqref{eq:bvp}
can be computed algebraically from a given fundamental system of~$T$. Moreover,
$G$ can be written in the form of an integral operator~$u = \cum_a^b g(x,\xi) \,
f(\xi) \, d\xi$, where~$g(x,\xi)$ is the so-called \emph{Green's function}
of~\eqref{eq:bvp}. More precisely, defining the operator ring generated
by~$R[d]$, the integral operator~$P$ and the evaluations~$\evl_a, \evl_b$,
modulo suitable relations, $G$ can be written as an element of this quotient
ring, with~$g$ as its canonical representative. We observe that a \emph{single}
integration is sufficient for undoing~$n$ differentiations---this is achieved by
collapsing~$n$ integrations into one, using integration by parts as one of the
relations.

In fact, the relations contain two different rules that encode \emph{integration
  by parts}: The rewrite rule~$\cum f \cum \to \dots$ encapsulates the weak
form~$P(f)P(g) = P(f P(g)) + P(P(f) g)$ while the rewrite rule~$\cum f \partial
\to \dots$ encodes the strong form~$P(fd(g)) = fg - P(d(f)g) -
\evl(f)\evl(g)$. The former contracts multiple integrations into one, the
purpose of latter is to eliminate derivatives from the Green's operator.

In concluding this brief account on the algebraic treatment of boundary
problems, let us note that the operator ring is much more general than the usual
Green's functions. Extending two-point conditions to \emph{Stieltjes boundary
  conditions} leads to a threefold generalization: More than two point
evaluations can be used, definite integrals may appear, and the differentiation
order need not be lower than that of~$T$. In this case, $G$ is still
representable as an element of the operator ring, and as before it may be
computed from a given fundamental system of~$T$.

Let us now turn to the distinction between the ``weak'' form (also called
Rota-Baxter axiom) and the ``strong'' form (called the hybrid Rota-Baxter axiom)
of integration by parts. Since the former does not involve the derivation~$d$,
it can be used to encode an algebraic structure~$(R,P)$ with just an
integral---this leads to the important notion of a Rota-Baxter algebra,
introduced below in a more general context in
Def.~\ref{def:cats}\ref{it:rbalg}. Rota-Baxter algebras form an extremely rich
structure with important applications in combinatorics, physics (Yang-Baxter
equation, renormalization theory), and probability; see~\cite{Gub} for a
detailed survey. Here we restrict our interest to the interaction between the
Rota-Baxter operator~$P$ and the derivation~$d$. If this interaction is only
given by the section axiom~$d \circ P = 1_R$, one speaks of a \emph{differential
  Rota-Baxter algebra}, introduced formally in
Def.~\ref{def:cats}\ref{it:drbalg} below. Intuitively, this is a weak coupling
between the differential algebra~$(R,d)$ and the Rota-Baxter algebra~$(R,P)$.

In contrast, the hybrid Rota-Baxter axiom involves~$P$ as well as~$d$, and it
creates a stronger coupling between~$d$ and~$P$. In fact, one checks immediately
that it implies the Rota-Baxter axiom, but the converse is not in general true
as one sees from Example~3 in~\cite{RR}. An \emph{integro-differential
  algebra}~$(R,d,P)$ is then defined as a differential ring~$(R,d)$ with a right
inverse~$P$ of~$d$ that satisfies the hybrid Rota-Baxter axiom; see
Def.~\ref{def:cats}\ref{it:intdiffalg} for the more general setting. Hence every
integro-differential algebra is also a differential Rota-Baxter algebra but
generally not vice versa. The crucial difference between the two categories can
be expressed in various equivalent ways~\cite[Thm.~2.5]{GRR} of which we shall
mention only two. An integro-differential algebra~$(R,d,P)$ is a differential
Rota-Baxter algebra satisfying one of the following equivalent extra conditions:
\begin{itemize}
\item The projector~$E := 1_R - P \circ d$ is \emph{multiplicative}. So if
  additionally~$\ker{d} = \bfk$ as is typically the case in an ordinary
  differential algebra, then~$E$ deserves to be called an ``evaluation''. This
  is the situation we had observed before in the standard setting.
\item The image~$P(R)$ is not only a subalgebra (as in any Rota-Baxter algebra)
  but an \emph{ideal} of~$R$. As a consequence, this excludes the possibility
  that~$(R,d)$ has the structure of a differential field so common in
  differential Galois theory (see above).
\end{itemize}

In many ``natural'' examples---such as the standard setting described
above---the notions of differential Rota-Baxter algebra and integro-differential
algebra actually coincide. However, their differences are borne out fully when
it comes to constructing the corresponding \emph{free objects}: For differential
Rota-Baxter algebras, this works in the same way as for the free Rota-Baxter
algebra (only with differential instead of plain monomials). Due to the tighter
differential/Rota-Baxter coupling, the construction of the free
integro-differential algebra is significantly more complex. Two different
methods have been used to this end: In~\cite{GRR} an artificial evaluation is
set up while in~\cite{GGZ} Gr\"obner-Shirshov bases are employed.

Free objects are useful in many ways. In the case of the free
integro-differential algebra, we mention the following two \emph{applications},
where we think of the~$R$ as function spaces similar to the standard setting:
\begin{itemize}
\item It allows to build up integro-differential subalgebras~$R \subset
  C^\infty(\RR)$ by \emph{adjoining} new functions. For example, we can create
  the subalgebra of exponentials~$R = \RR[e^x]$ by forming the free
  integro-differential algebra in one indeterminate~$e$ and passing to the
  quotient modulo the integro-differential ideal generated by~$P(e) - e +
  1$. Note that this implies the differential relation~$d(e) = e$ and the
  initial value~$\evl(e) = 1$.
\item It attaches a rigorous meaning to the intuitive notion of \emph{purely
    algebraic manipulations of integro(-differential) equations}. For example,
  in the proof of the Picard-Lindel\"of theorem, one transforms a given initial
  value problem for a differential equation into an equivalent integral
  equation.
\end{itemize}
Intuitively, one should think of the elements in a free integro-differential as
an integro-differential generalization of differential polynomials (with trivial
derivation on the coefficients).

\subsection{Noncommutative Setting.}

Up to now we have thought of the ring~$R$ as commutative but the above
considerations---in particular the applications of the free integro-differential
algebra---will also make sense without the assumption of commutativity. In fact,
the noncommutative standard example is the (real or complex) \emph{matrix
  algebra}~$R = C^\infty(\RR)^{n \times n}$, and this forms the basis for
two-point (and more general) boundary problems for linear systems of ordinary
differential equations. Hence we may think of the (noncommutative) free object
as the substrate for adjoining matrix functions and manipulating systems of
integro-differential equations (the usual situation of the Picard-Lindel\"of
theorem).

This can immediately be generalized. The \emph{matrix functor} assigns
to an arbitrary (commutative or noncommutative) integro-differential
algebra~$(R,d,P)$ the (necessarily noncommutative)
integro-differential algebra~$(R^{n \times n}, \bar{d}, \bar{P})$
whose derivation~$\bar{d}$ and Rota-Baxter operator~$\bar{P}$ are
defined coordinatewise; the same is true for the transport of
morphisms from~$R \to S$ to~$R^{n \times n} \to S^{n \times n}$.

Another familiar functor from the category of integro-differential algebras to
itself is given by the construction of \emph{noncommutative
  polynomials}~$R\langle x_1, \dots, x_k \rangle$ over a commutative
integro-differential algebra~$(R,d,P)$, where the~$x_1, \dots, x_k$ are assumed
to commute with the coefficients in~$R$ but not amongst themselves. The
derivation and Rota-Baxter operator, as well as the transport of morphisms, are
defined coefficientwise.

The construction of~$R\langle x_1, \dots, x_k\rangle$ models some extensions of
a commutative integro-differential algebra to a larger noncommutative one: In
some cases, the larger algebra will be a quotient of~$R\langle x_1, \dots,
x_k\rangle$. A typical case is given by extending~$R = C^\infty(\RR)$
to~$R[i,j,k] := R\langle i, j, k\rangle/I$ where~$I$ is the ideal generated by
the familiar relations~$i^2 = j^2 = k^2 = -1$ and~$ij=k, jk=i, ki=j$ with their
anticommutative counterparts. Obviously~$R[i,j,k]$ can be seen as an algebraic
model for smooth \emph{quaternion-valued functions} of a real variable. (Finding
the right notions of differentiation and integration for functions of a
quaternion variable is a far more delicate process, giving rise to the
\emph{quaternion calculus}~\cite{Deav}. It would be interesting to investigate
this in the frame of noncommutative integro-differential algebras but this is
beyond the scope of the current paper.)

Finally, let us mention a potential application in combinatorics: In
\emph{species theory}~\cite{BLL}, the usage of derivations and so-called
combinatorial differential equations~\cite{L} is
well-established. Algebraically, the isomorphism classes of species form a
differential semiring that can be extended to a differential ring by introducing
so-called virtual species. Using the more restricted setting of linear species,
it is also possible to introduce an integral operator~\cite{BLL,PSS}, thus
endowing the class of virtual linear species with the structure of an
integro-differential ring. Since species can be extended to a noncommutative
setting~\cite{DP}, it would be interesting to see how an integro-differential
structure can be set up in this case.

\subsection{Structure of the Paper.}

In this paper we construct free integro-differential algebras. This
construction, built on an earlier construction of free differential
Rota-Baxter algebras~\mcite{GK3}, is obtained by applying the method
of Gr\"obner bases or Gr\"obner-Shirshov bases. The method has its
origin in the works of Buchberger~\mcite{Bu}, Hironaka~\mcite{Hi},
Shirshov~\mcite{Sh} and Zhukov~\mcite{Zh}. Even though it has been
fundamental for many years in commutative algebra, associative
algebra, algebraic geometry and computational
algebra~\mcite{Be,Bo}. It has only recently shown how comprehensive
the method of Gr\"obner-Shirshov bases can be, through the large
number of algebraic structures that the method has been successfully
applied to.  See~\mcite{BC,BCC,BCL,BLSZ} for further details.
The method is especially useful in constructing free objects in
various categories, including the alternative constructions of free
Rota-Baxter algebras and free differential Rota-Baxter
algebras~\mcite{BCD,BCQ}.
In the recent paper~\mcite{GGZ}, this method is applied to construct the free commutative integro-differential algebras.

The layout of the paper is as follows. In
\emph{Section~\mref{sec:ida}}, we give the definition of
integro-differential algebra and summarize the construction of free
differential Rota-Baxter algebras as a preparation for the
construction of free (noncommutative) integro-differential
algebras. In \emph{Section~\mref{sec:monorder}}, we set up a weakly
monomial order on differential Rota-Baxter monomials of order $n$. In
\emph{Section~\mref{sec:cd}}, we prove the Composition-Diamond Lemma
for free differential Rota-Baxter algebras of order $n$. In
\emph{Section~\mref{sec:gs}}, we prove that the differential
Rota-Baxter ideal of the free differential Rota-Baxter algebra that
defines the relations for free integro-differential algebras possesses
a Gr\"obner-Shirshov basis. Therefore we can apply the
Composition-Diamond Lemma to obtain a canonical basis, identified as
the set of functional monomials, for the free integro-differential
algebra of order $n$. We then show that the order $n$ pieces form a
direct system whose functional monomials accumulate to a canonical
basis of the free integro-differential algebra on a finite set
$X$. Finally, we prove that for an arbitrary set $X$, the inclusions
of the finite subsets of $X$ into $X$ also preserve the functional
monomials, which allows us to take their union as a canonical basis of
the free integro-differential algebra on $X$.

\section{Free integro-differential algebras}
\mlabel{sec:ida}

We recall the concepts of algebras with various differential and integral operators that lead to the integro-differential algebra. We also summarize the constructions of the free objects in the corresponding categories. See~\mcite{GG,GRR} for further details and examples.

\subsection{The definitions}
Algebras considered in this paper are assumed to be unitary, unless specified otherwise.

\begin{defn}\label{def:cats}
{\rm Let $\bfk$ be a unitary commutative ring. Let $\lambda\in \bfk$ be fixed.
\begin{enumerate}
\item\label{it:dalg} A {\bf differential $\bfk$-algebra of weight
    $\lambda$} (also called a {\bf $\lambda$-differential
    $\bfk$-algebra}) is defined to be an associative $\bfk$-algebra $R$ together
  with a linear operator $d \colon R\to R$ such that
\begin{equation}
d(1)=0,\ d(uv)=d(u)v+ud(v)+\lambda d(u)d(v) \text{ for all } u, v\in R.
\mlabel{eq:diffl}
\end{equation}

\item\label{it:rbalg} A {\bf Rota-Baxter $\bfk$-algebra of weight $\lambda$} is defined to be an
    associative $\bfk$-algebra $R$ together with a linear operator
    $P\colon R\to R$ such that
\begin{equation}
P(u)P(v)=P(uP(v))+P(P(u)v)+\lambda P(uv) \text{ for all } u, v\in R.\mlabel{eq:rb}
\end{equation}
\item\label{it:drbalg} A {\bf differential Rota-Baxter k-algebra of weight $\lambda$} (also called a {\bf $\lambda$-differential
      Rota-Baxter $\bfk$-algebra}) is defined to be a differential $\bfk$-algebra
    $(R,d)$ of weight $\lambda$ and a Rota-Baxter operator $P$ of
    weight $\lambda$ such that
\begin{equation} d\circ P=\id.
\mlabel{eq:fft}
\end{equation}
\item\label{it:intdiffalg} An {\bf integro-differential $\bfk$-algebra of weight
      $\lambda$} (also called a {\bf $\lambda$-integro-differential
      $\bfk$-algebra}) is defined to be a differential $\bfk$-algebra $(R,d)$ of
    weight $\lambda$ with a linear operator $P\colon R \to R$ that satisfies Eq.~(\mref{eq:fft}) and such that
\begin{equation}
\begin{aligned}
P(d(u)P(v))&= uP(v)-P(uv) - \lambda P(d(u)v) \text{ for all } u, v\in R,\\
P(P(u)d(v))&= P(u)v-P(uv) - \lambda P(ud(v)) \text{ for all } u, v\in R.
\mlabel{eq:ibpl}
\end{aligned}
\end{equation}
\end{enumerate}
}
\end{defn}

Eqs.~($\mref{eq:rb}$), ($\mref{eq:fft}$) and ($\mref{eq:ibpl}$) are called the {\bf Rota-Baxter axiom}, {\bf section axiom} and {\bf integration by parts axiom}, respectively. See~\mcite{GRR} for the equivalent conditions for the integration by parts axiom in various forms.

\subsection{Free differential algebras}
We recall the standard construction of free differential algebras. We also  introduce the concept of a differential polynomial algebra with bounded order as it will be needed later in the paper.

For a set $Y$, let $M(Y)$ be the free monoid on $Y$ with identity 1, and let $S(Y)$ be the free semigroup on $Y$. Thus elements in $M(Y)$ are words, plus the identity $1$, from the alphabet set $Y$. Further the noncommutative polynomial algebra $\bfk\langle Y\rangle$ on $Y$ is the semigroup algebra $\bfk M(Y)$.

\begin{theorem}
\begin{enumerate}
\item
Let $Y$ be a set with a map $d_0\colon Y\to Y$. Extend $d_0$ to $d \colon
\bfk\langle Y \rangle  \to \bfk\langle Y \rangle$ as follows. Let $w=u_1\cdots u_k,
u_i\in Y$, $1\leq i\leq k$, be a word from the alphabet set $Y$. Recursively define
\begin{equation}
      d(w)=d_0(u_1)u_2 \cdots u_k + u_1 d (u_2 \cdots u_k) + \lambda
      d_0(u_1)d(u_2 \cdots u_k).
      \mlabel{eq:prodind}
\end{equation}
Explicitly, we have
\begin{equation}
    d(w)=\sum_{\emptyset \neq I\subseteq [k]} \lambda^{|I|-1} d_I(u_1)\cdots d_I(u_k), \quad
    d_I(u_i):=d_{w,I}(u_i)=\left\{\begin{array}{ll} d(u_i), & i\in I, \\
    u_i, & i\not\in I. \end{array} \right.
    \mlabel{eq:prodexp}
\end{equation}
Further define $d(1)=0$ and then extend $d$ to $\bfk\langle Y \rangle$
by linearity. Then $(\bfk\langle Y \rangle,d)$ is a differential
algebra of weight $\lambda$.
\mlabel{it:diffs}
\item
Let $X$ be a set. Let $ Y:=\Delta X: = \{ x^{(n)} \mid
x\in X, n\geq 0\}$ with the map $d_0\colon \Delta X\to \Delta X, x^{(n)}\mapsto x^{(n+1)}$. Then with the extension $d$ of $d_0$ as in Eq.~(\mref{eq:prodind}), $(\bfk \langle \Delta X\rangle, d)$ is the free differential algebra of weight $\lambda$ on the set $X$.  \mlabel{it:commfreediff}
\item
For a given $n\geq 1$, let $\Delta X^{(n+1)}:=\left\{x^{(k)}\,\big|\, x\in X, k\geq n+1\right\}$. Then $\bfk\langle\dx \rangle\Delta X^{(n+1)}\bfk\langle\dx \rangle$ is the differential ideal $I_n$ of $\bfk\langle\dx \rangle$ generated by the set $\{ x^{(n+1)}\,|\,x\in X\}$. The quotient differential algebra $\bfk\langle \dx\rangle/I_n$ is of order $n$ and has a canonical basis given by
$$\Delta_n X:=\{x^{(k)}\,|\, x\in X, k\leq n\},$$
thus giving a differential algebra isomorphism $\bfk\langle \dx \rangle/I_n\cong \bfk\langle\Delta_n X \rangle$, called the {\bf differential polynomial algebra of order $n$}. Here the differential structure on the later algebra is given by
\begin{equation*}
d(x^{(i)}) =
\left\{ \begin{array}{ll} x^{(i+1)}, & 1\leq i\leq n-1, \\ 0, & i=n. \end{array}\right .
 \mlabel{eq:diffn}
\end{equation*}
\mlabel{it:diffordn}
\end{enumerate}
\mlabel{thm:diff}
\end{theorem}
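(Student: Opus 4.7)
The plan has three parts corresponding to (a), (b), (c) of Theorem~\mref{thm:diff}, with (a) as the technical engine.

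Part (a). The recursion in (\mref{eq:prodind}) together with $d(1)=0$ and linearity already \emph{defines} $d$ as a $\bfk$-linear endomorphism of $\bfk\langle Y\rangle$; the work is to verify that $d$ satisfies the weight-$\lambda$ Leibniz identity
\[ d(uv) = d(u)v + u\, d(v) + \lambda\, d(u)\, d(v) \]
for all words $u,v$. First I would prove the explicit formula (\mref{eq:prodexp}) by induction on the word length $k$: the base $k=1$ is immediate, and in the inductive step I split the sum $\sum_{\emptyset\neq I\subseteq [k]}$ according to whether $1\in I$, apply the inductive version of (\mref{eq:prodexp}) to the tail $u_2\cdots u_k$, and reassemble the three resulting groups of terms to recover the three summands on the right-hand side of (\mref{eq:prodind}). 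With (\mref{eq:prodexp}) established, the Leibniz rule for $u=u_1\cdots u_p$ and $v=v_1\cdots v_q$ becomes a counting identity: nonempty $I\subseteq[p+q]$ are in bijection with pairs $(I_1,I_2)$, $I_1\subseteq [p]$, $I_2\subseteq [q]$, not both empty; the cases ``$I_2=\emptyset$'', ``$I_1=\emptyset$'', and ``both nonempty'' contribute respectively $d(u)v$, $u\, d(v)$, and $\lambda\, d(u)\, d(v)$, the last one absorbing the extra power of $\lambda$ through $|I|-1 = (|I_1|-1)+(|I_2|-1)+1$.

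Part (b). Given a differential $\bfk$-algebra $(R,\partial)$ of weight $\lambda$ and a map $f\colon X\to R$, I would define $\tilde f\colon \Delta X\to R$ by $\tilde f(x^{(n)}) := \partial^n(f(x))$ and extend it to a $\bfk$-algebra homomorphism $\bar f\colon \bfk\langle \Delta X\rangle \to R$ by the universal property of the free associative algebra on $\Delta X$. The compatibility $\bar f\circ d = \partial\circ \bar f$ needs only to be verified on monomials $w\in M(\Delta X)$; this is done by induction on $|w|$, applying (\mref{eq:prodind}) on the source and the weight-$\lambda$ Leibniz identity on the target together with the equality $\partial(\tilde f(x^{(n)})) = \partial^{n+1}(f(x)) = \tilde f(d_0(x^{(n)}))$. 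Uniqueness is forced: any differential homomorphism extending $x\mapsto f(x)$ must send $x^{(n)}$ to $\partial^n(f(x))$.

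Part (c). Let $J := \bfk\langle \Delta X\rangle\, \Delta X^{(n+1)}\, \bfk\langle \Delta X\rangle$. It is manifestly a two-sided algebra ideal containing $\{x^{(n+1)}\mid x\in X\}$; the key step is to show $d(J)\subseteq J$, so that $J$ coincides with the differential ideal $I_n$ generated by $\{x^{(n+1)}\mid x\in X\}$. For a generator $w\,x^{(k)}\,v$ with $k\geq n+1$, two applications of the Leibniz rule established in part (a) expand $d(w\,x^{(k)}\,v)$ into a sum in which every term carries a factor $x^{(k)}$ or $x^{(k+1)}$, each lying in $\Delta X^{(n+1)}$, hence in $J$. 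Conversely, any differential ideal containing $\{x^{(n+1)}\}$ must contain all iterates $d^j(x^{(n+1)}) = x^{(n+1+j)}$, hence all of $\Delta X^{(n+1)}$, hence $J$; this proves $J = I_n$. Since $I_n$ is generated as an algebra ideal by the subset $\Delta X^{(n+1)}\subseteq \Delta X$ of the free generating set, the quotient is canonically the free associative algebra on $\Delta X\setminus\Delta X^{(n+1)} = \Delta_n X$. The induced derivation sends $x^{(i)}$ to $x^{(i+1)}$ for $i\leq n-1$ and to $0$ for $i=n$ because $x^{(n+1)}\in I_n$.

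The main obstacle. The combinatorial core is part (a): deriving the explicit formula (\mref{eq:prodexp}) from the recursion (\mref{eq:prodind}) and matching it against the Leibniz rule through the subset bijection, being careful with the weight-$\lambda$ bookkeeping. Once this is in place, the universal property in (b) and the $d$-stability of $J$ in (c) follow routinely from it.
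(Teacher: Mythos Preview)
Your proposal is correct and gives exactly the natural argument: the inductive derivation of~(\mref{eq:prodexp}) from~(\mref{eq:prodind}), the subset-splitting proof of the weight-$\lambda$ Leibniz rule, the standard universal-property construction for~(b), and the verification that $J$ is $d$-stable for~(c) are all sound, including the $\lambda^{|I|-1}$ bookkeeping. The paper itself does not prove this theorem in any detail: it simply remarks that~(a) generalizes~(b), that both can be proved as in the reference~\mcite{GK3}, and that~(c) is a direct consequence of~(b); your write-up therefore supplies precisely what the paper leaves to citation, and the method is the expected one.
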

\begin{proof} Item~\mref{it:diffs} is a generalization of Item~\mref{it:commfreediff} from~\mcite{GK3} and can be proved in the same way. Item~\mref{it:diffordn} is a direct consequence of Item~\mref{it:commfreediff}.
\end{proof}

\subsection{Free operated algebras}

We now recall the construction of the free operated algebra on a set $X$ that has appeared in various studies. In particular it gives the free (differential) Rota-Baxter algebra as a quotient~\mcite{BCQ,Gop,Gub,GSZ}. 

\begin{defn}
{\rm An {\bf operated monoid (resp. $\bfk$-algebra) with operator set $\Omega$} is defined to be a monoid (resp. $\bfk$-algebra) $G$ together with a set of maps $\alpha_\omega\colon G\to G,\omega\in \Omega$. A morphism between operated monoids (resp. $\bfk$-algebras) $(G,\{\alpha_\omega\}_\omega)$ and $(H,\{\beta_\omega\}_\omega)$ is a monoid (resp. $\bfk$-algebra) homomorphism $f\colon G\to H$ such that $f\circ \alpha_\omega=\beta_\omega\circ f$ for $\omega\in \Omega$.
}
\end{defn}
We next construct the free operated monoids generated by a set.

Fix a set $Y$. We define monoids $\frakM_{\Omega,n}:=\frakM_{\Omega,n}(Y)$ for $n\geq 0$ by the following recursion. We use the notation $\sqcup$ for disjoint union.

First denote $\frakM_{\Omega,0}:= M(Y)$.
Let $\lc M(Y)\rc_\omega:=\{\lc u\rc_\omega\,|\, u\in M(Y)\}, \omega\in \Omega,$ be disjoint sets in bijection with and disjoint from $M(Y)$.
Then define
$$\frakM_{\Omega,1}:= M(Y\sqcup (\sqcup_{\omega\in \Omega} \lc M(Y)\rc_\omega)).$$
Even though elements in $\lc M(Y)\rc_\omega$ are symbols indexed by elements in $M(Y)$, the sets $\lc M(Y)\rc_\omega$ and $M(Y)$ are disjoint. In particular $\lc1\rc_\omega$ is a symbol that is different from $1$.

The natural inclusion $Y\hookrightarrow Y\sqcup (\sqcup_{\omega\in \Omega}\lc\frakM_{\Omega,0}\rc_\omega)$ induces a monomorphism
$i_{0,1}\colon  \frakM_{\Omega,0}=M(Y)\hookrightarrow \frakM_{\Omega,1}=M(Y\sqcup (\sqcup_{\omega \in \Omega}\, \lc\frakM_{\Omega,0}\rc_\omega))$ of free monoids, allowing we to identify $\frakM_{\Omega,0}$ with its image in $\frakM_{\Omega,1}$.
Assume that $\frakM_{\Omega,m-1}$ has been defined for $m\geq 2$ and that the embedding
\begin{equation}
i_{m-2,m-1}\colon  \frakM_{\Omega,m-2} \hookrightarrow \frakM_{\Omega,m-1}
\mlabel{eq:emb}
\end{equation}
has been obtained. We define
\begin{equation*}
 \frakM_{\Omega,m}:=M(Y\sqcup (\sqcup_{\omega \in \Omega} \lc\frakM_{\Omega,m-1}\rc_\omega) ).
 \mlabel{eq:frakm}
 \end{equation*}
From the embedding in Eq.~(\mref{eq:emb}), we obtain the injection
$$  \lc\frakM_{\Omega,m-2}\rc_\omega \hookrightarrow
    \lc \frakM_{\Omega,m-1} \rc_\omega, \ \omega\in \Omega.$$
Thus by the universal property of $\frakM_{\Omega,m-1}=M(Y\sqcup (\sqcup_{\omega \in \Omega} \lc\frakM_{\Omega,m-2}\rc_\omega))$ as a free monoid, we have
\begin{eqnarray*}
\frakM_{\Omega,m-1} &=& M(Y\sqcup (\sqcup_{\omega \in \Omega} \lc\frakM_{\Omega,m-2}\rc_\omega))\hookrightarrow
    M(Y\sqcup (\sqcup_{\omega \in \Omega} \lc \frakM_{\Omega,m-1}\rc_\omega)) =\frakM_{\Omega,m}.
\end{eqnarray*}
This completes the inductive construction of the monoids $\frakM_{\Omega,n}, n\geq 0$. 

We finally define the monoid from the direct limit
$$ \frakM_\Omega(Y):=\dirlim \frakM_{\Omega,m}=\bigcup_{m\geq 0}\frakM_{\Omega,m}.$$
When $\Omega$ is a singleton, the subscript $\Omega$ will be suppressed.
Elements in $\frakM_\Omega(Y)$ are called {\bf bracketed monomials} in $Y$. With the operators
\begin{equation*}
\lc\ \rc_\omega\colon \frakM_\Omega(Y)\to \frakM_\Omega(Y), u\mapsto \lc u\rc_\omega, \ \omega\in \Omega,
\mlabel{eq:mapp}
\end{equation*}
the pair $(\frakM_\Omega(Y), \{\lc\ \rc_\omega\}_{\omega \in \Omega})$ is an operated monoid. Therefore it linear span $(\bfk\frakM_\Omega(Y), \lc\ \rc_{\omega \in \Omega})$ is an operated $\bfk$-algebra.

\begin{prop}$($\mcite{Gop}$)$
Let $j_Y\colon Y \hookrightarrow \frakM_\Omega(Y)$ denote the natural embedding.
Then the triple $(\bfk\frakM_\Omega(Y),\{\lc\ \rc_\omega\}_\omega, j_Y)$ is the free operated $\bfk$-algebra on $Y$. More precisely, for any operated $\bfk$-algebra $R$ and any set map $f \colon Y\to R$, there is a unique extension of $f$ to a homomorphism $\free{f}\colon \bfk\frakM_\Omega(Y)\to R$ of operated $\bfk$-algebras.
\mlabel{pp:freetm}
\end{prop}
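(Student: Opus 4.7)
\medskip

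\noindent\textbf{Proof proposal.} The plan is to construct the extension $\free{f}$ by induction along the layers $\frakM_{\Omega,m}(Y)$ used in the construction of $\frakM_\Omega(Y)$, and then to pass to the direct limit and extend $\bfk$-linearly. Let $R$ be an operated $\bfk$-algebra with operators $P_\omega \colon R\to R$ for $\omega\in\Omega$, and let $f\colon Y\to R$ be any set map. I will build a compatible family of monoid homomorphisms $\free{f}_m\colon \frakM_{\Omega,m}(Y)\to R$ (where $R$ is regarded as a monoid under its multiplication) satisfying $\free{f}_m\circ\lc\ \rc_\omega = P_\omega \circ \free{f}_{m-1}$ on $\frakM_{\Omega,m-1}(Y)$, and compatible with the embeddings $i_{m-1,m}$ of Eq.~(\mref{eq:emb}).

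At the base step, since $\frakM_{\Omega,0}=M(Y)$ is the free monoid on $Y$, the universal property of free monoids extends $f$ uniquely to a monoid homomorphism $\free{f}_0\colon M(Y)\to R$. For the inductive step, assume $\free{f}_{m-1}\colon \frakM_{\Omega,m-1}(Y)\to R$ has been constructed. Define a set map
\[
g_m\colon Y\sqcup\Bigl(\sqcup_{\omega\in\Omega}\lc \frakM_{\Omega,m-1}(Y)\rc_\omega\Bigr)\longrightarrow R
\]
by $g_m(y):=f(y)$ for $y\in Y$ and $g_m(\lc u\rc_\omega):=P_\omega(\free{f}_{m-1}(u))$ for $u\in\frakM_{\Omega,m-1}(Y)$ and $\omega\in\Omega$. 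Since $\frakM_{\Omega,m}(Y)$ is the free monoid on this disjoint union, $g_m$ extends uniquely to a monoid homomorphism $\free{f}_m\colon \frakM_{\Omega,m}(Y)\to R$. By construction $\free{f}_m\circ\lc\ \rc_\omega=P_\omega\circ\free{f}_{m-1}$ on $\frakM_{\Omega,m-1}(Y)$, and $\free{f}_m\circ i_{m-1,m}=\free{f}_{m-1}$ because both sides are monoid homomorphisms agreeing on the generators of $\frakM_{\Omega,m-1}(Y)$ (by the inductive hypothesis and the definition of $g_m$ on $Y$ and on the subset $\lc \frakM_{\Omega,m-2}(Y)\rc_\omega$).

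The compatibility $\free{f}_m\circ i_{m-1,m}=\free{f}_{m-1}$ means the family $\{\free{f}_m\}_{m\geq 0}$ is a cocone over the directed system defining $\frakM_\Omega(Y)=\dirlim\frakM_{\Omega,m}(Y)$, hence there is a unique monoid homomorphism $\free{f}\colon \frakM_\Omega(Y)\to R$ restricting to each $\free{f}_m$. Extending $\bfk$-linearly to $\bfk\frakM_\Omega(Y)$ yields a $\bfk$-algebra homomorphism, and the identity $\free{f}\circ\lc\ \rc_\omega=P_\omega\circ\free{f}$ holds on each $\frakM_{\Omega,m}(Y)$ by construction, so it holds on all of $\bfk\frakM_\Omega(Y)$ and $\free{f}$ is a morphism of operated $\bfk$-algebras. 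For uniqueness, any morphism of operated $\bfk$-algebras $\free{f}\,'\colon \bfk\frakM_\Omega(Y)\to R$ with $\free{f}\,'\circ j_Y=f$ must agree with $\free{f}$ on $Y$, and then a straightforward induction on the depth of bracketed monomials (using multiplicativity and compatibility with each $\lc\ \rc_\omega$) forces $\free{f}\,'=\free{f}$.

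The main bookkeeping obstacle is verifying the compatibility $\free{f}_m\circ i_{m-1,m}=\free{f}_{m-1}$ cleanly; the subtlety is that $\frakM_{\Omega,m-1}(Y)$ plays two roles, as the ambient monoid into which $\free{f}_{m-1}$ is defined and as the indexing set for the new bracketed generators at level $m$. This is resolved by appealing at each level to the universal property of the free monoid on the augmented alphabet, which forces two monoid homomorphisms agreeing on generators to coincide; everything else is routine induction and a direct-limit argument.
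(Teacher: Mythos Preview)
Your proof is correct. The paper does not actually prove this proposition; it simply cites~\mcite{Gop}, where the result is established. Your argument---constructing $\free{f}_m$ inductively on the layers $\frakM_{\Omega,m}(Y)$ via the universal property of free monoids, checking compatibility with the embeddings $i_{m-1,m}$, and passing to the direct limit before extending $\bfk$-linearly---is exactly the standard construction and is the approach taken in the cited reference. The compatibility verification you flag as the main bookkeeping point is handled correctly: for $m=1$ it reduces to two monoid homomorphisms on $M(Y)$ agreeing on $Y$, and for $m\geq 2$ the inductive hypothesis $\free{f}_{m-1}\circ i_{m-2,m-1}=\free{f}_{m-2}$ gives agreement on the bracketed generators.
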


\subsection {The construction of free Rota-Baxter algebras.}

Consider $\frakM_\Omega(Y)$ with $\Omega=\{\omega\}$ being a singleton. Denote $P(u):=\lc u\rc:=\lc u\rc_\omega, u\in \frakM(Y)$.
For a nonempty set $Y$ and nonempty subsets $U$ and $V$ of $\frakM(Y)$, define the {\bf alternating products of $U$ and $V$} to be the following subsets of $\frakM(Y)$
\begin{align}
\Lambda(U,V):= \left(\bigcup_{r\geq 0} (UP(V))^rU\right) \bigcup  \left(\bigcup_{r\geq 1} \big(UP(V)\big)^r\right) \bigcup  \left(\bigcup_{r\geq 0} (P(V)U)^rP(V) \right) \bigcup \left(\bigcup_{r\geq 1} (P(V)U)^r\right ).
\mlabel{Eq:fourparts}
\end{align}
With these notations, define
$\Lambda_0(Y) = M(Y)$ to be the free monoid on $Y$ and, for $m\geq 1$, define
$$\Lambda_m(Y) = \Lambda(S(Y), \Lambda_{m-1}(Y)) \cup \{1\}.$$
Then $\Lambda_m(Y), m\geq 0,$ define an increasing sequence
and we define the set of {\bf Rota-Baxter words} to be
$$\ncrbw(Y) := \Lambda_{\infty}(Y):= \cup_{m\geq 0} \Lambda_m(Y).$$
Each $1\neq u\in \ncrbw(Y)$ can be uniquely expressed as $u = u_1\cdots u_m$,
where $u_1,\cdots,u_m$ are alternately in $S(Y)$ and $P(\ncrbw(Y))$.
The {\bf depth} $\dep(u)$ of $u$ is defined to be the least $m\geq 0$ such that $u$ is contained in $\Lambda_m(Y)$.
Define
\begin{equation*}
P_Y\colon  \ncrbw(Y)\to \ncrbw(Y), \quad u\mapsto \lc u\rc, \quad u\in \ncrbw(Y).
\mlabel{eq:freerbp}
\end{equation*}

Let $I_{\mathrm{RB}}(Y)$ denote the operated ideal of $\bfk\frakM(Y)$ generated by elements of the form
$$ \lc u\rc \lc v\rc- \lc u\lc v\rc\rc-\lc \lc u\rc v\rc - \lambda \lc uv\rc, \quad u, v\in \bfk\frakM(Y).$$
By~\mcite{EG,Gub} where $\bfk\rbw(Y)$ is denoted by $\ncsha(Y)$, the composition
\begin{equation}
\bfk\rbw(Y) \to \bfk\frakM(Y) \to \bfk\frakM(Y)/I_{\mathrm{RB}}(Y)
\mlabel{eq:frbij}
\end{equation}
is a bijection. Hence (the coset representatives of) the words
  in~$\rbw(Y)$ form a linear basis of the free Rota-Baxter algebra on $Y$.
Further, write
\begin{equation}
\red:= \alpha \circ \eta\colon  \bfk\frakM(Y)\to \bfk\frakM(Y)/I_{\mathrm{RB}}(Y)
\to \bfk\ncrbw(Y),
\mlabel{eq:red}
\end{equation}
where $\eta:\bfk\frakM(Y)\to \bfk\frakM(Y)/I_{\mathrm{RB}}$ is the quotient map and $\alpha:\bfk\frakM(Y)/I_{\mathrm{RB}}\to \bfk\ncrbw(Y)$ is the inverse of the linear bijection in Eq.~(\mref{eq:frbij}).

Define a product $\diamondsuit$ on $\bfk\ncrbw(Y)$ as follows. Let
$u=u_1u_2\cdots u_s$ and $v= v_1v_2\cdots v_t$ be two Rota-Baxter
words, where $u_i$ for $1\leq i\leq s$ and $v_j$ for $1\leq j\leq t$
are alternately in $S(Y)$ and $\lc \rbw(Y) \rc$.
\begin{enumerate}
\item
If ~$s=t=1$ and hence $u, v\in S(Y)\cup\lc \rbw(Y)\rc$,  then define
\begin{equation}
u\diamondsuit v:=\left\{\begin{array}{ll}
uv, & u \text{ or } v \in S(Y), \\
\red(\lc \tilde{u}\rc\lc \tilde{v}\rc)= \red(\lc B(\tilde{u},\tilde{v})\rc)=\lc \red(B(\tilde{u},\tilde{v}))\rc, & u=\lc \tilde{u}\rc, v=\lc \tilde{v}\rc \in \lc \rbw(Y)\rc, \end{array}\right.
 \mlabel{eq:diapr}
\end{equation}
where $B(\tilde{u}, \tilde{v})= \tilde{u}\lc \tilde{v}\rc + \lc \tilde{u} \rc \tilde{v} + \lambda \tilde{u}\,\tilde{v}$.
\item
If  $s>1$ or $t>1$, then define $$u\diamondsuit v:= u_1u_2\cdots (u_s \diamondsuit v_1)v_2\cdots v_t,$$
where $u_s \diamondsuit  v_1$  is defined by Eq.~(\mref{eq:diapr}) and the remaining products are given by concatenation together with $\bfk$-linearity when $u_s\diamondsuit v_1$ is a linear combination.
\end{enumerate}

We call $\ncrbw(\dx)$ the set of {\bf differential Rota-Baxter (DRB) monomials} on $X$.

\begin{theorem}
\begin{enumerate}
\item\mlabel{it:freediffrba} $($\cite{EG}$)$ Let $Y$ be a set. Then
  $(\bfk\ncrbw(Y),\diamondsuit,P_Y)$ is the free Rota-Baxter algebra
  on $Y$.
\item
$($\cite{GK3}$)$
Let $X$ be a set and $(\bfk \langle  \Delta X\rangle,d)$ the differential algebra of weight $\lambda$ on $X$ in Theorem~\mref{thm:diff}.\mref{it:commfreediff}. There is a unique extension $d_{\Delta X}$ of $d$ to $\bfk \ncrbw(\Delta X)$ such that $(\bfk \ncrbw (\Delta X), d_{\Delta X}, P_{\Delta X})$, together with $j_X\colon \bfk \langle  \dx\rangle  \hookrightarrow \bfk\ncrbw(\Delta X)$, is the free differential Rota-Baxter
    $\bfk$-algebra of weight $\lambda$ on the differential algebra $\bfk \langle  \dx \rangle $.
\label{it:bk3}
\end{enumerate}
\mlabel{thm:freediffrb}
\end{theorem}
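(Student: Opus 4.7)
The plan is to handle the two parts in sequence. For Part (\ref{it:freediffrba}), the cleanest route is to exhibit the quotient $\bfk\frakM(Y)/I_{\mathrm{RB}}(Y)$ as the free Rota-Baxter algebra on $Y$: Proposition \ref{pp:freetm} produces the free operated algebra on $Y$, and quotienting by the operated ideal generated by the Rota-Baxter relation yields, via the usual universal property of quotients, an algebra to which every set map $Y\to R$ into a Rota-Baxter algebra $R$ extends uniquely. One then transports the algebra structure of the quotient along the linear bijection (\ref{eq:frbij}) onto $\bfk\ncrbw(Y)$, and must check that the resulting product coincides with $\diamondsuit$ as defined by the case analysis preceding the theorem. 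Only the bracketed case $\lc\tilde u\rc\diamondsuit\lc\tilde v\rc$ requires real work: in the quotient the naive concatenation equals $\lc B(\tilde u,\tilde v)\rc$ modulo $I_{\mathrm{RB}}(Y)$, and applying $\red$ to the interior yields precisely formula (\ref{eq:diapr}). The Rota-Baxter axiom on $(\bfk\ncrbw(Y),\diamondsuit,P_Y)$ then holds by transport.

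For Part (\ref{it:bk3}), given the Rota-Baxter structure from Part (\ref{it:freediffrba}), I would build $d_{\Delta X}$ by induction on the depth of Rota-Baxter words. On the depth-zero layer $\bfk\langle\Delta X\rangle$ set $d_{\Delta X}:=d$. The section axiom $d_{\Delta X}\circ P_{\Delta X}=\id$ forces $d_{\Delta X}(\lc w\rc):=w$ for any $w\in\rbw(\Delta X)$, and for a Rota-Baxter word $u=u_1\cdots u_k$ with $k\geq 2$ the Leibniz rule from (\ref{eq:diffl}) dictates
\[
d_{\Delta X}(u):=d_{\Delta X}(u_1)\diamondsuit(u_2\cdots u_k)+u_1\diamondsuit d_{\Delta X}(u_2\cdots u_k)+\lambda\, d_{\Delta X}(u_1)\diamondsuit d_{\Delta X}(u_2\cdots u_k),
\]
with the tail $d_{\Delta X}(u_2\cdots u_k)$ available by induction on length. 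One then checks that this operator satisfies the Leibniz rule on all $\diamondsuit$-products (not just on those already in canonical form) and that $d_{\Delta X}(1)=0$, making $(\bfk\ncrbw(\Delta X),d_{\Delta X},P_{\Delta X})$ a differential Rota-Baxter algebra of weight $\lambda$.

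The universal property finally follows by a standard two-step extension: given a differential Rota-Baxter algebra $(R,d_R,P_R)$ and a differential-algebra morphism $\phi:\bfk\langle\Delta X\rangle\to R$, Part (\ref{it:freediffrba}) supplies a unique Rota-Baxter morphism $\tilde\phi:\bfk\ncrbw(\Delta X)\to R$ extending $\phi$, and one verifies the identity $\tilde\phi\circ d_{\Delta X}=d_R\circ\tilde\phi$ by induction on depth, using $d_R\circ P_R=\id_R$ at bracketed letters and the Leibniz rule in $R$ on products. The main technical obstacle throughout is the compatibility of these recursions with the product $\diamondsuit$, whose bracketed case involves the nontrivial reduction $\red$; I would organize the verification by double induction on the depths of the two factors, with base cases being plain concatenations and the inductive step invoking the Rota-Baxter and section axioms to reduce to lower-depth instances.
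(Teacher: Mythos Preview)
The paper does not supply its own proof of this theorem: both parts are quoted results, attributed inline to \cite{EG} and \cite{GK3} respectively, and the paper simply states them before moving on to use $\bfk\ncrbw(\Delta_n X)$ in the subsequent sections. So there is no proof in the paper to compare your proposal against.

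That said, your outline is a faithful sketch of how these results are established in the cited sources. For Part~(\ref{it:freediffrba}) the argument via the quotient $\bfk\frakM(Y)/I_{\mathrm{RB}}(Y)$ and transport along the bijection~(\ref{eq:frbij}) is exactly the construction carried out in \cite{EG,Gub}; the only delicate point, as you note, is checking that the transported product agrees with the explicit recursion~(\ref{eq:diapr}) in the bracketed case, which comes down to the identity $\red(\lc\tilde u\rc\lc\tilde v\rc)=\lc\red(B(\tilde u,\tilde v))\rc$. For Part~(\ref{it:bk3}) your depth-induction definition of $d_{\Delta X}$ matches the approach of \cite{GK3}: the section axiom forces $d_{\Delta X}(\lc w\rc)=w$, and the $\lambda$-Leibniz rule handles products. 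The genuine work, as you correctly flag, is verifying that $d_{\Delta X}$ is a $\lambda$-derivation with respect to $\diamondsuit$ (not just concatenation), since the reduction $\red$ intervenes when two bracketed factors meet; this is handled in \cite{GK3} by an induction on total depth using the Rota-Baxter identity. Your plan for the universal property is likewise the standard one.
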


In the same fashion, one obtains~$\ncrbw(\Delta_n X))$, called the set of {\bf DRB monomials of order $n$} on $X$, as a basis of $\bfk
  \ncrbw(\Delta_n X)$ by applying~\mref{it:freediffrba}
  to~$Y:=\Delta_n X, n\geq 1$.
 We note that in $\bfk \ncrbw(\Delta_n
X)$, the property $d^{n+1}(u)=0$ only applies to $u\in X$. For
example, taking $n=1$, then $d^2(x)=0$. But $d(\lc x\rc)= x$ and hence
$d^2(\lc x \rc)=d(x)=x^{(1)}\neq 0$.

\subsection{Free integro-differential algebras}
\mlabel{ss:red}

From the universal property of $\bfk\frakM(Y)$, we obtain the following result on free integro-differential algebra, by general principles of universal algebra~\mcite{BN,Co}.

\begin{prop}
Let $X$ be a set. Let $\Omega=\{d, P\}$ and denote $d(u):=\lc u\rc_d, P(u):=\lc u\rc_P$\,.
Let $J_{\ID}=J_{\ID,X}$ be the operated ideal of $\bfk\frakM_\Omega(X)$ generated by the set
$$\left\{\left . \begin{array}{l}
d(uv)-d(u)v-ud(v)-\lambda d(u)d(v),\\
d(1),\\
(d\circ P)(u)-u, \\
P(d(u)P(v))- uP(v)+ P(uv)+ \lambda P(d(u)v),\\
P(P(u)d(v))- P(u)v + P(uv) + \lambda P(ud(v))
\end{array} \,\right|\, u, v\in \frak\frakM_{\Omega}(X)\right\}.$$
Then the quotient operated algebra
$\bfk\frakM_\Omega(X)/J_{\ID}$, with the quotient of the operator $d$
and $P$, is the free integro-differential algebra on $X$.
\mlabel{pp:freerb}
\end{prop}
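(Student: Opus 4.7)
The plan is to reduce the claim to the universal property of the free operated algebra established in Proposition~\ref{pp:freetm} together with the observation that the generating set of $J_{\ID}$ literally lists the defining axioms of an integro-differential algebra from Definition~\ref{def:cats}. In other words, this is a standard presentation-by-generators-and-relations argument in the variety of operated $\bfk$-algebras with operator set $\Omega=\{d,P\}$.

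First I would argue that the quotient $A:=\bfk\frakM_\Omega(X)/J_{\ID}$ is itself an integro-differential algebra. Since $J_{\ID}$ is by construction an \emph{operated} ideal, both $d$ and $P$ descend to well-defined linear operators on $A$. The five families of generators of $J_{\ID}$ become zero in $A$, which translates exactly to the Leibniz identity (Eq.~(\ref{eq:diffl})), the normalization $d(1)=0$, the section axiom $d\circ P=\id$ (Eq.~(\ref{eq:fft})), and the two forms of the integration-by-parts axiom (Eq.~(\ref{eq:ibpl})). Note that the Rota-Baxter axiom is not listed among the generators, but it follows from the integration-by-parts axioms together with the section axiom by the standard derivation (see the discussion after Def.~\ref{def:cats}), so $A$ is in particular a differential Rota-Baxter algebra.

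Next I would verify the universal property. Given any integro-differential $\bfk$-algebra $(R,d_R,P_R)$ and any set map $f\colon X\to R$, Proposition~\ref{pp:freetm} provides a unique morphism $\free{f}\colon \bfk\frakM_\Omega(X)\to R$ of operated $\bfk$-algebras extending $f$, where the operators on $R$ are $d_R$ (for $\omega=d$) and $P_R$ (for $\omega=P$). Because $R$ satisfies all the integro-differential axioms, each generator of $J_{\ID}$ is mapped to zero under $\free{f}$; since $\free{f}$ is a morphism of operated algebras, its kernel is an operated ideal, so $J_{\ID}\subseteq \ker\free{f}$. Hence $\free{f}$ factors uniquely through $A$ as an operated algebra homomorphism $\bar{f}\colon A\to R$ extending $f$. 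This $\bar{f}$ is automatically a morphism of integro-differential algebras, since the only additional structure required is compatibility with $d$ and $P$, which is already built in. Uniqueness of $\bar{f}$ follows from uniqueness of $\free{f}$ and surjectivity of the quotient map.

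There is no serious obstacle here: the statement is a direct instance of the general principle that, inside any variety of algebras defined by operators and identities, the free object on a set $X$ is obtained from the free operated algebra on $X$ by quotienting by the operated ideal generated by the defining identities applied to all elements (see~\mcite{BN,Co}). The only point one must be careful about is that $J_{\ID}$ is the \emph{operated} ideal (i.e.\@ an ideal stable under both $d$ and $P$) rather than merely a two-sided algebra ideal, which is precisely what is needed so that $d$ and $P$ descend to the quotient.
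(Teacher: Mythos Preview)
Your proposal is correct and is essentially what the paper does: the paper simply remarks that the proposition follows ``from the universal property of $\bfk\frakM(Y)$ \ldots by general principles of universal algebra~\mcite{BN,Co}'' without giving any further details. Your write-up spells out exactly those general principles via the standard generators-and-relations factorization through the free operated algebra of Proposition~\mref{pp:freetm}, which is the intended argument.
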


Our main purpose in this paper is to give an explicit construction of the free integro-differential algebra by determining a canonical subset of $\frakM_\Omega(X)$. The construction is given in Theorem~\mref{thm:gsb}.

We will achieve this construction in several steps. First let $J_{\mathrm{DRB}} = J_{\mathrm{DRB},X}$ denote the operated ideal of $\bfk\frakM_\Omega(X)$ generated by the set
$$\left\{\left . \begin{array}{l}
d(uv)-d(u)v-ud(v)-\lambda d(u)d(v),\\
d(1),\\
(d\circ P)(u)-u, \\
P(u)P(v)-P(uP(v))-P(P(u)v)-\lambda P(uv) \end{array} \,\right|\, u, v\in \frak\frakM_{\Omega}(X)\right\}.$$
Then the quotient operated algebra $\bfk\frakM_{\Omega}(X)/J_{\mathrm{DRB}}$, with the quotient operators $d$ and $P$, is the free differential Rota-Baxter algebra on $X$. Its explicit construction is given in~\mcite{GK3} and recalled in Theorem~\mref{thm:freediffrb}:
$$\bfk\frakM_{\Omega}(X)/J_{\mathrm{DRB}} \cong \bfk \ncrbw(\Delta X),$$
as the free Rota-Baxter algebra on the free differential algebra
$\bfk\langle \Delta X \rangle$ on $X$.

By a simple substitution of $u$ by $P(u)$ in the integro-differential identity in Eq.~(\mref{eq:ibpl}), we see that an integro-differential algebra is a differential Rota-Baxter algebra~\mcite{GRR}. Thus $J_{\ID}$ contains $J_{\mathrm{DRB}}$. Let $I_{\ID}$ denote the image of $J_{\ID}$ under the quotient map $\bfk\frakM_{\Omega}(X)\to \bfk \ncrbw(\Delta X)$, then we have
$$ \bfk \frakM_{\Omega}(X)/J_{\ID}\cong \bfk\ncrbw(\Delta X)/I_{\ID}.$$
Further,  $I_{\ID}$ is the differential Rota-Baxter ideal of $\ncrbw(\Delta X)$ generated by the set
$$\left\{\left . \begin{array}{l}
P(d(u)P(v))- uP(v)+ P(uv)+ \lambda P(d(u)v),\\
P(P(u)d(v))- P(u)v + P(uv) + \lambda P(ud(v))
\end{array} \,\right|\, u, v\in \ncrbw(\Delta X)\right\}.$$

Thus to obtain an explicit construction of the free integro-differential algebra $\bfk \frakM_{\Omega}(X)/J_{\ID}$ by providing a canonical subset of $\frakM_{\Omega}(X)$ as
a basis (of coset representatives) of the quotient, we just need to determine a canonical subset of $\ncrbw(\Delta X)$ as a basis of the quotient $\bfk\ncrbw(\Delta X)/I_{\ID}$.

However, in order to apply the Gr\"obner-Shirshov basis method, we need a monomial (well) order on $\ncrbw(\Delta X)$ which is easily seen to be nonexistent: Suppose $x>P(x)$,
then we have $x>P(x)>\cdots >P^n(x)>\cdots$ leading to an infinite descending chain. Suppose $P(x)>x$, then we have $x>d(x)$, again leading to an infinite descending chain $x>d(x)\cdots >x^{(n)}>\cdots$.
To overcome this difficulty, we consider, for each $n\geq 1$, the free Rota-Baxter algebra $\bfk \ncrbw(\Delta_n X)$ on the truncated differential algebra $\bfk[\Delta_n X]$ in Theorem~\mref{thm:diff}.\mref{it:diffordn} and construct
an explicit basis of the quotient $\bfk\ncrbw(\Delta_nX)/I_{\ID,n}$
where $I_{\ID,n}$ is the differential Rota-Baxter ideal of the Rota-Baxter
algebra $\bfk\ncrbw(\Delta_nX)$ generated by the set
\begin{equation}
\left\{\left . \begin{array}{l}
\phi_1(u,v):=P(d(u)P(v))- uP(v)+ P(uv)+ \lambda P(d(u)v),\\
\phi_2(u,v):=P(P(u)d(v))- P(u)v + P(uv) + \lambda P(ud(v))
\end{array} \,\right|\, u, v\in \ncrbw(\Delta_n X)\right\}.
\mlabel{eq:ibpn}
\end{equation}

Then as $n$ goes to infinity, the above explicit basis will give the desired basis of $\bfk\ncrbw(\Delta X)/I_{\ID}$ and hence of $\bfk \frakM_{\Omega}(X)/J_{\ID}$. See the proof of Theorem~\mref{thm:gsb} for details of this last step.

\section{Weakly monomial order}
\mlabel{sec:monorder}
Write $\nr:= \calr(\dnx)$.

\begin{defn}
{\rm Let $X$ be a set, $\star$ a symbol not in $X$ and $\Delta_n
X^\star := \Delta_n (X\cup \{\star\})$.
\begin{enumerate}
\item
A {\bf
$\star$-DRB monomial on $\Delta_n X$} is defined to be an expression in $\ncrbw(\Delta_n X^\star)$ with exactly one
occurrence of $\star$. We let $\nrs$ denote the set of all $\star$-DRB monomials on $\Delta_n X$.
\item
For $q\in \nrs$ and
$u\in \nr$, we define
$$q|_u := q|_{\star \mapsto u}$$
to be the bracketed monomial in $\frakM (\Delta_n X)$ obtained by replacing the letter $\star$ in $q$ by
$u$. We call $q|_u$ a {\bf $u$-monomial on $\Delta_n X$}.
\item
For $s=\sum_i c_i u_i \in \bfk
\nr$ with $c_i\in \bfk$, $u_i\in \nr$
and $q\in \nrs$, define
$$q|_s := \sum_i c_i q|_{u_i},$$
which is in $\bfk\frakM(\Delta_n X)$. We call $q|_s$ an {\bf $s$-monomial} on $\Delta_n X$.
This applies in particular when $s$ is a monomial.
\end{enumerate}
}
\mlabel{def:drbm}
\end{defn}
We note that the $u$-monomial $q|_u$ from a $\star$-DRB monomial $q$ might not be a DRB monomial.
For example, $q=P(x)\star$ is in $\nrs$ and $u=P(x)$ is in $\nr$ where $x\in X$. But the $u$-monomial $q|_u=P(x)P(x)$ is not in $\nr$.

By the same argument as in the commutative case~\mcite{GGZ}, we have 
\begin{lemma}
Let $S$ be a subset of $\bfk \nr $ and $\mathrm{Id(S)}$
be the differential Rota-Baxter ideal of $\bfk \nr$ generated by $S$.
We have
$$\mathrm{Id(S)} = \left\{\left. \sum_i c_i q_i | _{s_i} \, \right | c_i\in \bfk,  q_i\in \nrs, s_i\in S \right\}.$$
\mlabel{lemma:operator ideal}
\end{lemma}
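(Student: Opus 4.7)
I would establish the equality by proving each inclusion separately, in the standard style for ideals of operated algebras described via $\star$-substitution.

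For the inclusion RHS $\subseteq \mathrm{Id}(S)$, it suffices by linearity to show that each generator $q|_s$ lies in $\mathrm{Id}(S)$, which I would prove by induction on the structure of $q\in\nrs$. The base case $q=\star$ gives $q|_s=s\in S\subseteq \mathrm{Id}(S)$. If $q$ factors as $q=u\diamondsuit v$ with $\star$ occurring in (say) the factor $v$, then by the recursive definition of $\diamondsuit$ we have $q|_s=u\diamondsuit (v|_s)$ with $v|_s\in\mathrm{Id}(S)$ by induction and $u\in\bfk\calr_n$, so closure of $\mathrm{Id}(S)$ under left multiplication by $\bfk\calr_n$ yields $q|_s\in\mathrm{Id}(S)$; the right factor case is symmetric. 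If $q=\lc q'\rc$, then $q|_s=P(q'|_s)\in\mathrm{Id}(S)$ by induction and closure under $P$.

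For the inclusion $\mathrm{Id}(S)\subseteq$ RHS, I would verify that the set $R:=\{\sum_i c_i q_i|_{s_i}\mid c_i\in\bfk, q_i\in\nrs, s_i\in S\}$ is itself a differential Rota-Baxter ideal of $\bfk\calr_n$ containing $S$; by minimality of $\mathrm{Id}(S)$ this gives the inclusion. Containment $S\subseteq R$ is witnessed by taking $q=\star$, and $R$ is manifestly a $\bfk$-subspace. For closure under left and right multiplication by an element $a\in\calr_n$, one extends the substitution picture by forming $a\diamondsuit^\star q\in\bfk\nrs$, i.e.\ the diamond product of $a$ with $q$ that treats $\star$ as an inert DRB letter; then $(a\diamondsuit^\star q)|_s=a\diamondsuit(q|_s)$, so the product is again a sum of terms of the required form. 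Closure under $P$ is immediate from $P(q|_s)=\lc q\rc|_s$ with $\lc q\rc\in\nrs$.

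The main obstacle is the closure under $d$. Expanding $d(q|_s)$ by iterated use of the Leibniz identity in Eq.~(\mref{eq:diffl}) and the section axiom $d\circ P=\id$ produces a sum of terms in which $d$ is distributed over the factors of $q$ and, possibly, over the inserted $s$. Terms where $d$ strikes a letter of $\dnx$ or a $P$-bracket inside $q$ reassemble, using that $d$ carries letters of $\dnx$ into $\dnx$ and that $d\circ P=\id$, into an element $q'|_s$ with $q'\in\nrs$; while terms where $d$ strikes the inserted $s$ give contributions of the form $q''|_{d(s)}$, which lie in $R$ because the generating set may be closed under $d$ without loss of generality (replacing $S$ by $\bigcup_{k\ge 0}d^k(S)$ does not alter $\mathrm{Id}(S)$ and in the application of Eq.~(\mref{eq:ibpn}) a direct calculation shows $d(\phi_1)=d(\phi_2)=0$, so no enlargement is needed). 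The bookkeeping required to combine these cases is the one technical hurdle, and as the authors indicate it follows the same pattern as the commutative treatment in~\mcite{GGZ}.
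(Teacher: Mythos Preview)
Your plan is correct and is precisely the standard argument the paper is invoking from~\cite{GGZ}; no separate proof is given here.  One sharpening is worth making explicit.  Your identification of closure under $d$ as the only delicate step is exactly right, and your fix---passing to the $d$-closure of $S$---is not merely a convenience but is genuinely needed in the order-$n$ setting: although the type~I monomials $d^{\ell}(\star)=\star^{(\ell)}\in\nrs$ let you absorb one application of $d$ into $q$ for $\ell\le n$, the symbol $\star^{(n+1)}$ is not available in $\Delta_n X^\star$, so the identity ``$d(q|_s)=(d q)|_s$'' breaks down at the top order.  This is why every subsequent use of the lemma in the paper (Lemma~\mref{lemma:normalexp}, Lemma~\mref{lemma:basis}, Theorem~\mref{thm:CD lemma}) carries the standing hypothesis $d(S)\subseteq S$, and why, as you observe via Eq.~(\mref{eq:zero}), the intended generating set $S_n$ already satisfies it.  With that hypothesis in force the term $q''|_{d(s)}$ you isolate is literally of the required form, and the bookkeeping goes through exactly as you describe.
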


We now refine the concept of $\star$-DRB monomials. 
\begin{defn}
If $q = p|_{d^\ell(\star)}$ for some $p\in \ncrbw^\star(\Delta_n
X)$ and $\ell\in \mathbb{Z}_{\geq 1}$, then we call $q$ a {\bf type I $\star$-DRB monomial}. Let $\calr_{n,\ii}^\star$ denote the set of type I $\star$-DRB monomials on
$\Delta_n X$ and call
$$\calr_{n,\iii}^\star := \nrs \setminus \calr_{n,\ii}^\star$$
the set of {\bf type II $\star$-DRB monomials}.
\end{defn}

\begin{defn}
Let $<$ be a linear order on $\ncrbw(\Delta_nX)$, $q\in \nrs$ and $s\in
\mathbf{k}\nr$.
\begin{enumerate}
\item
For any $0\neq f\in \bfk \nr$, let $\lbar{f}$ denote the leading term of $f$:
$f = c \overline{f} + \sum_{i} c_iu_i$, where $0\neq  c, c_i\in
\bfk$, $u_i\in \nr$, $u_i< \overline{f}$. Furthermore, $f$ is called {\bf monic} if $c=1$.
\item
Write
$$\overline{q |_s} := \overline{\red(q|_{s}}),$$
where $\red\colon \bfk\frakM(\Delta_nX)\to\bfk \nr$ is the reduction map in Eq.~(\mref{eq:red}).
\item
The element $q|_s\in \bfk\nr$ is called {\bf normal} if
$q|_{\lbar{s}}$ is in $\nr$. In other words, if $\red(q|_{\lbar{s}}) = q|_{\lbar{s}}$. \mlabel{it:normal}
\end{enumerate}
\mlabel{def:normaldef}
\end{defn}

\begin{remark} \begin{enumerate}
\item By definition, $q|_s$ is normal if and only if
$q|_{\lbar{s}}$ is normal if and only if the $\lbar{s}$-DRB monomial $q|_{\lbar{s}}$ is already a DRB monomial, that is, no further reduction in $\bfk\nr$ is possible.
\item
Examples of not normal (abnormal) $s$-DRB monomials are
\begin{enumerate}
\item $q=\star P(x)$ and $\bar{s}=P(x)$, giving $q|_{s}=P(x)P(x)$, which is reduced to $P(xP(y))+P(P(x)y)+\lambda P(xy)$ in $\bfk\nr$;
\item $q=d(\star)$ and $\bar{s}=P(x)$, giving $q|_{\bar{s}}=d(P(x))$, which is reduced to $x$ in $\bfk\nr$;
\item $q=d(\star)$ and $\bar{s}=x^2$, giving $q|_{\bar{s}}=d(x^2)$, which is reduced to $2xx^{(1)}+\lambda (x^{(1)})^2$ in $\bfk\nr$;
\item $q=d^n(\star)$ and $\bar{s}=d(x)$, giving $q|_{\bar{s}}=d^{n+1}(s)$, which is reduced to $0$ in $\bfk\nr$.
\end{enumerate}
\end{enumerate}
\mlabel{rk:ex}
\end{remark}

\begin{defn}
A {\bf weakly monomial order} on $\nr$ is a well order
$<$ satisfying
$$
u < v\, \Rightarrow
\overline{q|_u} < \overline{q|_v} \text{ if either } q \in \calr_{n,\iii}^\star, \text{ or } q \in
\ncrbw^{\star}_{n,\ii} \text{ and } q|_v  \text{ is normal}
$$
for $u, v\in \nr$.
\mlabel{defweakmonomial}
\end{defn}

Let $X$ be a well-ordered set. Let $n\geq 0$ be given. First, we extend the order on $X$ to $\dx$ and $\dnx$. For $x_0^{(i_0)}, x_1^{(i_1)}\in \Delta X$ (resp. $\Delta_n X$) with $x_0, x_1\in X$, define
\begin{equation}
x_0^{(i_0)} < x_1^{(i_1)} \left(\text{resp. } x_0^{(i_0)}<_n x_1^{(i_1)}\right) \Leftrightarrow (x_0,-i_0) < (x_1, -i_1) \quad \text{
lexicographically}.
\mlabel{eq:difford}
\end{equation}
For example $x^{(2)} < x^{(1)}< x$. Also, $x_1<x_2$ implies
$x_1^{(2)} < x_2^{(2)}$. Then by~\mcite{BN}, the order $<_n$ is a well order on $\Delta_n X$. Next, we extend the well order on $\Delta_n X$ to a weakly monomial order on $\nr$.

We adapt the order defined in \mcite{BCD} to the case when the set is
taken to be $\dnx$ and when the order is restricted to
$\nr$. For any $u\in \nr$ and for a
set $T\subseteq \dnx \cup \{P\}$, denote by $\dg_{T}(u)$ the number of
occurrences of $t\in T$ in $u$. Let
$$\dg(u) = (\dg_{P \cup \dnx} (u), \dgp(u) ).$$
We order $\dg(u)$ lexicographically. If $u\in \dnx \cup P(\nr)$, then $u$ is called {\bf indecomposable}. For any $u\in \nr$, $u$ has a {\bf standard form:}
\begin{equation}
u = u_0\cdots u_k, \text{ where } u_0, \cdots, u_k \text{ are indecomposable.}
\mlabel{eq:sdec}
\end{equation}
Now we set up an order $<_n$ on $\nr$ as follows. Let $u,v\in \nr$. If $\dg(u) <\dg(v)$, then $u<_nv$. If $\dg(u)=\dg(v) = (m_1,m_2)$, then we define $u<_n v$ by induction on $(m_1,m_2)$ which is at least $(1,0)$. If $(m_1,m_2) = (1,0)$, that is, $u,v \in \dnx$, we use the order in Eq~(\mref{eq:difford}). Let $(m_1,m_2) > (1,0)$ be given, and assume the order is defined for all $(m_1^\prime, m_2^\prime) < (m_1,m_2)$ and consider $u, v$ with $\dg(u)=\dg(v)=(m_1,m_2)$.
If $u,v\in P(\nr)$, say $u = P(\tilde{u})$ and $v = P(\tilde{v})$, then define $u<_n v$ if and only if $\tilde{u}<_n \tilde{v}$ where the latter is defined by the induction hypothesis. Otherwise, let $u=u_0\cdots u_k$ and $v = v_0\cdots v_\ell$ be the standard forms with $k>0$ or $\ell >0$. Then define $u<_nv$ if and only if $(u_0,\cdots, u_k) < (v_0,\cdots, v_\ell)$ lexicographically. Here the latter is again defined by the induction hypothesis.

We next show that the order $<_n$ defined above is a weakly monomial
order on $\nr$. Recall the following lemma from~\mcite{BCD} on
$\mathcal{R}(X)$ which still applies when it is restricted to $\nr$.

\begin{lemma} (\mcite{BCD} Lemma 3.3)
If $u<_nv$ with $u,v \in \nr$, then $\lbar{uw} <_n \lbar{vw}$ and $\lbar{wu} <_n \lbar{wv}$ for any $w\in \nr$.  \mlabel{lemma:mulcom}
\end{lemma}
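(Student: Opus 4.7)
The plan is to mimic the proof of Lemma~3.3 in~\mcite{BCD}, adapted to the truncated setting $\nr=\calr(\dnx)$. I would proceed by induction on a complexity measure of $(u,v,w)$, namely the lexicographic pair $(\dg(u)+\dg(v)+\dg(w),\max\{\dep(u),\dep(v),\dep(w)\})$, together with a case split on whether the formal concatenations $uw$ and $vw$ are already DRB monomials or must be reduced via the Rota-Baxter relation. In the no-reduction case, $\lbar{uw}=uw$ and $\lbar{vw}=vw$; if $\dg(u)<\dg(v)$ then appending $w$ preserves the strict degree inequality, and if $\dg(u)=\dg(v)$ then the standard forms of $uw$ and $vw$ agree from the position at which $w$ begins, so the lexicographic component of $<_n$ immediately inherits $u<_nv$.

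The heart of the argument is the reduction case, where $u$ ends with $P(a)$ and $w$ begins with $P(b)$, forcing an application of
\[
P(a)\diamondsuit P(b)=P(a\diamondsuit P(b))+P(P(a)\diamondsuit b)+\lambda P(a\diamondsuit b).
\]
A degree count shows that the first two summands share the $\dg_{P\cup\dnx}$-value of $P(a)P(b)$ while the third is strictly smaller, so $\lbar{uw}$ is the $<_n$-larger of the two $P$-wrapped summands; both have strictly smaller depth inside the outermost $P$ and can therefore be analyzed inductively. The same analysis applied to $vw$ (and, if necessary, to its internal subproducts) identifies $\lbar{vw}$, and the induction hypothesis on the $P$-interiors yields the desired inequality $\lbar{uw}<_n\lbar{vw}$.

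The main obstacle will be the asymmetric situation in which $u$ and $v$ have structurally different endings, so that $uw$ and $vw$ undergo different reduction patterns — one of them may require no reduction while the other triggers the Rota-Baxter relation. Here the argument relies on two facts: that $<_n$ is dominated by $\dg$ at the outermost level, and that Rota-Baxter reduction preserves $\dg_{P\cup\dnx}$ on leading terms; together these force $\dg(\lbar{uw})\leq\dg(\lbar{vw})$, with any equality case resolved by descending into the recursive comparison of $P$-interiors where induction applies. Finally, the left-multiplication inequality $\lbar{wu}<_n\lbar{wv}$ follows by a fully symmetric argument, interchanging the roles of leftmost and rightmost indecomposables.
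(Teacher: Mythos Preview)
The paper does not actually prove this lemma: it is stated with the citation ``(\mcite{BCD} Lemma~3.3)'' and the surrounding text says only that the result from~\mcite{BCD}, proved there for $\calr(Y)$ with an arbitrary alphabet $Y$, ``still applies when it is restricted to $\nr$.'' Since $\nr=\calr(\dnx)$ is precisely the free Rota-Baxter word set on the particular alphabet $Y=\dnx$, and the order $<_n$ is exactly the order of~\mcite{BCD} with that alphabet plugged in, the lemma is a literal instance of the cited result; no adaptation to the ``truncated setting'' is required at all.

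Your plan to rederive the proof from scratch is therefore doing strictly more than the paper does. The outline you sketch is broadly the shape of the argument in~\mcite{BCD}, but if you pursue it you should recheck your induction measure: in the reduction case $u=u'P(a)$, $w=P(b)w'$, the recursive subproducts $a\diamondsuit P(b)$ and $P(a)\diamondsuit b$ do not obviously decrease $\max\{\dep(u),\dep(v),\dep(w)\}$, nor does the sum $\dg(u)+\dg(v)+\dg(w)$ decrease when you pass from analyzing $(u,v,w)$ to analyzing these internal products. The standard fix is to induct on a quantity attached to the \emph{product} being reduced (e.g.\ the total degree or depth of the pair being multiplied), rather than on the triple $(u,v,w)$. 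That said, for the purposes of this paper the one-line citation is all that is expected.
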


\begin{lemma}
Let $\ell\geq 1$ and $s\in
\nr$. Then $d^\ell(\star)|_{s}$ is normal if and only if $s\in
\Delta_{n-\ell} X$. \mlabel{lemma:diffnormal}
\end{lemma}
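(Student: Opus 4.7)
The substitution $d^\ell(\star)|_s$ is the bracketed monomial in $\bfk\frakM(\Delta_n X)$ obtained by replacing $\star$ in $d^\ell(\star)$ by $s$, and the question is whether the resulting expression already lies in $\nr$ without invoking any of the rewrites available in $\bfk\nr$---the order bound $d(x^{(n)})=0$, the Leibniz rule, the section axiom $d\circ P=\id$, or the Rota--Baxter axiom. The only identification that does \emph{not} count as a rewrite is the letter shift $d^\ell(x^{(k)})=x^{(k+\ell)}$ within the alphabet $\Delta_n X$, which is valid precisely when $k+\ell\le n$.

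For ($\Leftarrow$), I would take $s=x^{(k)}\in\Delta_{n-\ell}X$, so that $k\le n-\ell$ and hence $d^\ell(\star)|_s=x^{(k+\ell)}\in\Delta_n X\subseteq\nr$, making it a DRB monomial and hence normal.

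For ($\Rightarrow$), I would argue the contrapositive by inspecting the standard form $s=u_0 u_1\cdots u_m$ of $s\in\nr$ (Eq.~(\mref{eq:sdec})), where each indecomposable factor $u_i$ lies in $\Delta_n X\cup P(\nr)$. Four subcases cover all possibilities with $s\notin\Delta_{n-\ell}X$: (i) $s=1$, where $d(1)=0$ forces $d^\ell(s)=0\notin\nr$; (ii) $s=x^{(j)}\in\Delta_n X$ with $j>n-\ell$, where the shift $d^\ell(x^{(j)})=x^{(j+\ell)}$ falls outside $\Delta_n X$ and is thus reduced to $0$ by the order bound; (iii) $s=P(t)$ for some $t\in\nr$, where the section axiom rewrites $d(s)$ to $t$, so the formal substitution is not literally in $\nr$; (iv) $m\ge 1$, where the Leibniz rule forces $d(s)$ to split into a sum of several products. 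In each subcase, $d^\ell(\star)|_s$ is not literally in $\nr$, confirming non-normality.

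The main conceptual subtlety is distinguishing literal membership of $d^\ell(s)$ in $\nr$ (free letter substitution together with the admissible shift of the derivation index) from equality with a DRB monomial \emph{after} reductions in $\bfk\nr$. Under the former---which is what the paper's definition of normality requires---even $d(P(t))$ fails to be normal despite its reducing to the single DRB monomial $t$, because the section axiom must be invoked. Once this distinction is fixed, the four cases above reduce to a direct inspection of the defining identities of a differential Rota--Baxter algebra recalled in Section~\mref{sec:ida}, and the argument is a routine noncommutative adaptation of the commutative proof in~\mcite{GGZ}.
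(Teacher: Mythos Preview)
Your proof is correct and follows essentially the same approach as the paper. The paper's own proof is terser: it splits the contrapositive into only two cases, $s\notin\Delta_n X$ and $s\in\Delta_n X\setminus\Delta_{n-\ell}X$, and defers the verification of non-normality to the list of abnormal examples in Remark~\mref{rk:ex}; your cases (i), (iii), (iv) simply refine the first of these, and your case (ii) is the second.
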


\begin{proof}
If $s \in \Delta_{n-\ell} X$, then $d^{\ell}(s)$ is in $\Delta_nX$ and hence
$d^\ell(\star)|_{s}$ is normal. Conversely, if $s \notin
\Delta_{n-\ell} X$, then either $s \notin \dnx$ or $s \in \Delta_n X \setminus \Delta_{n-\ell} X$. In both cases we have that $d^\ell(\star)|_{s}$ is not normal. See Remark~\mref{rk:ex}.
\end{proof}

\begin{lemma}
\label{leading diffRB}  Let $u, v\in \nr$ and $\ell\in \mathbb{Z}_{\geq 1}$. If $u <_n v$ and $d^\ell(\star)|_v$ is normal,
then $\lbar{d^\ell(u)} <_n \lbar{d^\ell(v)}$. \mlabel{lemma: leading
diffRB}
\end{lemma}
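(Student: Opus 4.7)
The plan is to use Lemma~\mref{lemma:diffnormal} to unpack the normality hypothesis on $d^\ell(\star)|_v$ into the equality $v \in \Delta_{n-\ell}X$. Writing $v = y^{(j)}$ with $y \in X$ and $0 \le j \le n-\ell$, we then have $d^\ell(v) = y^{(j+\ell)} \in \dnx$ as a single, nonzero monomial with $\lbar{d^\ell(v)} = y^{(j+\ell)}$.

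Next I would exploit the fact that $v \in \dnx$ carries the minimal non-unit $\dg$-value, namely $(1,0)$: any occurrence of the operator $P$ in $u$ forces $\dgp(u) \ge 1$, and any product of two or more indecomposable factors forces $\dg_{P\cup\dnx}(u) \ge 2$. Consequently the hypothesis $u <_n v$ forces $u \in \dnx \cup \{1\}$. This collapse is the only genuinely structural step of the argument; once it is in place the rest is a short case distinction.

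If $u = 1$, or if $u = x^{(i)} \in \dnx$ with $i + \ell > n$, then $d^\ell(u) = 0$ and the conclusion holds by the convention that the leading term of $0$ is dominated by any nonzero leading term. Otherwise $u = x^{(i)}$ with $i + \ell \le n$, so $d^\ell(u) = x^{(i+\ell)}$, and the desired inequality $x^{(i+\ell)} <_n y^{(j+\ell)}$ translates via Eq.~(\mref{eq:difford}) to $(x,-i-\ell) < (y,-j-\ell)$ lexicographically, which is immediate from $(x,-i) < (y,-j)$ since simultaneously shifting both second coordinates by $-\ell$ preserves a lexicographic comparison.

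I expect no serious obstacle: once the normality hypothesis is converted into $v \in \Delta_{n-\ell} X$, the problem reduces to a one-line comparison inside $\dnx$, and only the bookkeeping concerning the zero element of $\bfk\nr$ (and the trivial case $u=1$) requires any attention.
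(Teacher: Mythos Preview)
Your argument is correct and follows the same underlying idea as the paper: use Lemma~\mref{lemma:diffnormal} to force $v\in\Delta_{n-\ell}X$, then use the minimality of $\dg(v)=(1,0)$ to force $u\in\dnx\cup\{1\}$, and finish by a direct comparison in $\dnx$. The only difference is that the paper wraps this in an induction on $\ell$ (doing your computation for $\ell=1$ and then iterating), whereas you compute $d^\ell(y^{(j)})=y^{(j+\ell)}$ directly; your handling of the degenerate cases $u=1$ and $d^\ell(u)=0$ is also a bit more explicit than the paper's.
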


\begin{proof}
We prove the result by induction on $\ell$. We first consider $\ell
=1$ and prove $\lbar{d(u)} <_n \lbar{d(v)}$. Since $d(\star)|_v$ is normal, we have $v=x_1^{(i_1)}\in \Delta_{n-1} X$ by Lemma
\ref{lemma:diffnormal}. Since $u <_n v$, by the definition of $<_n$, we have $u=x_2^{(i_2)}\in \Delta_n X$ with either $x_2 < x_1$ or $x_1 = x_2$ and $i_2> i_1$. Hence $\lbar{d(u)} <_n \lbar{d(v)}$.

Next, suppose the result holds for $1\leq m <\ell$. Then by the induction hypothesis, we have
$$\lbar{d^{\ell}(u)} = \lbar{d(d^{\ell-1}(u))} = \lbar{d(\lbar{d^{\ell-1}(u)})} <_n \lbar{d(\lbar{d^{\ell-1}(v)})}=
\lbar{d(d^{\ell-1}(v))} = \lbar{d^{\ell}(v)}.$$
\end{proof}

\begin{prop}
 The order $<_n$ is a weakly monomial order on $\nr$. \mlabel{lemma:weakmonomial}
\end{prop}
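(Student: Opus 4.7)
The plan is to verify both defining properties of a weakly monomial order: that $<_n$ is a well order on $\nr$, and that it satisfies the substitution-compatibility clause. For the well-order property, I would induct on the degree tuple $\dg(u) \in \NN \times \NN$, which is itself well-ordered lexicographically. Within a fixed $\dg$-level set, the recursive definition of $<_n$ either strips an outer $P$ (passing to a comparison at strictly lower degree) or performs a lexicographic comparison of the indecomposable standard-form factors (each of strictly lower degree), so no infinite descending chain within a single level is possible; together with well-foundedness of $\NN \times \NN$ and linearity from the trichotomy at each recursive step, this makes $<_n$ a well order.

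For the compatibility clause, suppose $u <_n v$ in $\nr$ and $q \in \nrs$ satisfies either $q \in \calr_{n,\iii}^\star$, or $q \in \calr_{n,\ii}^\star$ with $q|_v$ normal. I would prove $\overline{q|_u} <_n \overline{q|_v}$ by structural induction on $q$, relying on two preliminary observations: the type of $q$ together with the exponent $\ell \in \{0, \ldots, n\}$ of the sole letter $\star^{(\ell)}$ in $q$ is inherited by every $\star$-DRB sub-monomial of $q$; and $P$ is strictly monotone under $<_n$, so $\overline{P(s)} = P(\overline{s})$ for any $s \in \bfk\nr$. The base case $q = \star^{(\ell)}$ is trivial for $\ell = 0$; for $\ell \geq 1$, normality of $q|_v$ forces $v \in \Delta_{n-\ell}X$ by Lemma~\mref{lemma:diffnormal}, so $\overline{q|_v} = v^{(\ell)}$, and Lemma~\mref{lemma: leading diffRB} yields $\overline{q|_u} = \overline{d^\ell(u)} <_n v^{(\ell)}$. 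In the inductive step $q = P(q')$, normality descends to $q'|_v$ (since a DRB monomial of the form $P(a)$ forces $a \in \nr$), the inductive hypothesis gives $\overline{q'|_u} <_n \overline{q'|_v}$, and monotonicity of $P$ finishes the case. In the remaining case, $q$ is either a Rota-Baxter product $q_1 \cdots q_m$ with $m \geq 2$ or a single $S$-block word $y_1 \cdots y_r$ with $r \geq 2$ containing $\star^{(\ell)}$; the induction applies to the unique $\star$-carrying factor, and iterated applications of Lemma~\mref{lemma:mulcom} on the surrounding $\nr$-factors extend the inequality to all of $q$.

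The main obstacle I expect is controlling the interplay between the leading-term operator $\overline{\,\cdot\,}$ and the Rota-Baxter reductions performed by $\red$ when the substituted value $d^\ell(u)$ expands as a $\bfk$-linear combination of several DRB monomials. The key point is that every expansion term of $d^\ell(u)$ is $\leq_n \overline{d^\ell(u)}$, which in the type I case is itself strictly smaller than $v^{(\ell)} = \overline{d^\ell(v)}$ by Lemma~\mref{lemma: leading diffRB}; by Lemma~\mref{lemma:mulcom}, placing any such smaller term into the surrounding context of $q$ yields a DRB monomial strictly below $\overline{q|_v}$. Hence no cancellation or Rota-Baxter reduction inside $\red(q|_u)$ can lift its leading term above $\overline{q|_v}$, and the induction goes through cleanly.
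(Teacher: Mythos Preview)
Your proposal is correct and follows essentially the same approach as the paper. Both arguments rest on the same three lemmas (Lemma~\mref{lemma:mulcom} for multiplication, Lemma~\mref{lemma:diffnormal} for normality of $d^\ell(\star)|_v$, and Lemma~\mref{lemma: leading diffRB} for the $d^\ell$ step); the only difference is organizational. The paper does a three-case split on where $\star$ sits (top level, inside $P$, inside $d$) and in Case~3 factors $q = p|_{d^\ell(\star)}$ with $p$ of type~II, handling the $d^\ell$ part first and then invoking Cases~1--2 for $p$. You instead run a structural induction on $q$, peeling off outer products and $P$'s until you reach the base case $\star^{(\ell)}$. This is the same reduction in reverse order. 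Your explicit verification of the well-order property is an addition the paper omits, deferring to~\mcite{BCD}.
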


\begin{proof}
Let $u,v\in \nr$ with $u <_n v$ and $q\in
\nrs$. Depending on the location of the symbol $\star$, we have the
following three cases to consider.
\smallskip

\noindent
{\bf Case 1.} Suppose the symbol $\star$ in $q$ is not contained in $P$ or $d$. Then $q = s\star t$ where $s,t\in \nr$. This case is covered by Lemma \mref{lemma:mulcom}

\noindent
{\bf Case 2.} Suppose the symbol $\star$ is contained in $P$. Then $q = sP(p)t$ for some $s,t\in \nr$ and
$p \in \nrs$. This case can be verified by
induction on $\mathrm{dep}(q)$ and the fact that, for $u,v \in \nr$, $u <_n v$ implies $P(u) <_n P(v)$ by the definition of $<_n$.
\smallskip

\noindent
{\bf Case 3.}  The symbol $\star$ is contained in $d$, that is, $q\in \calr_{n,\ii}^\star$. Then $q =
p|_{d^{\ell}(\star)}$ for some $p\in \nrs$ and
$\ell\in \mathbb{Z}_{\geq 1}$. Take such $\ell$
maximal so that $p\in \calr_{n,\iii}^\star$. We need to show that if $u<_nv$ and $q|_v$ is
normal, then $\lbar{q|_{u}} <_n \lbar{q|_{v}}$. But if $q|_v$ is normal then $d^{\ell}(\star)|_v$ is normal. Then by Lemma
\ref{leading diffRB}, we have $\lbar{d^\ell(u)} <_n
\lbar{d^\ell(v)}$. Then by Cases 1 and 2, we have $\lbar{q|_{u}} =
\lbar{p|_{\lbar{d^\ell(u)}}}
<_n\lbar{p|_{\lbar{d^\ell(v)}}} = \lbar{q|_{v}}$. This completes the proof.
\end{proof}

We shall use the weakly monomial order $<_n$ on $\nr$ throughout the
rest of this paper. The following consequence of Proposition~\mref{lemma:weakmonomial} will be applied in
Section~\mref{sec:cd}.

\begin{lemma}
Let $q\in \nrs$ and let $s\in
\mathbf{k}\nr$ be monic. If $q|_s$ is normal, then
$\lbar{q|_s} = q|_{\lbar{s}}$. \mlabel{normalequiv}
\end{lemma}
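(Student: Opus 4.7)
The plan is to unfold the definitions and reduce the statement directly to the defining property of a weakly monomial order (Definition~\ref{defweakmonomial}), which by Proposition~\ref{lemma:weakmonomial} applies to~$<_n$. Since $s$ is monic, I would write
$$s = \lbar{s} + \sum_i c_i u_i, \qquad c_i \in \bfk, \ u_i \in \nr, \ u_i <_n \lbar{s}.$$
Linearity of the substitution then gives
$$q|_s = q|_{\lbar{s}} + \sum_i c_i\, q|_{u_i} \in \bfk\frakM(\Delta_n X),$$
and applying $\red\colon \bfk\frakM(\Delta_n X)\to \bfk\nr$ yields
$$\red(q|_s) = \red(q|_{\lbar{s}}) + \sum_i c_i\, \red(q|_{u_i}).$$
The normality hypothesis on $q|_s$ says precisely that $\red(q|_{\lbar{s}}) = q|_{\lbar{s}} \in \nr$, so this simplifies to $\red(q|_s) = q|_{\lbar{s}} + \sum_i c_i\, \red(q|_{u_i})$.

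Next I would show that each summand $\red(q|_{u_i})$ is strictly below $q|_{\lbar{s}}$ in the order $<_n$. By Definition~\ref{def:normaldef}(b) we have $\overline{q|_{u_i}} = \overline{\red(q|_{u_i})}$, so it suffices to check $\overline{q|_{u_i}} <_n q|_{\lbar{s}}$. Here I split into the two cases of Definition~\ref{defweakmonomial}: if $q\in \calr_{n,\iii}^\star$, then $u_i <_n \lbar{s}$ alone gives $\overline{q|_{u_i}} <_n \overline{q|_{\lbar{s}}}$; if $q\in \calr_{n,\ii}^\star$, then I additionally use that $q|_{\lbar{s}}$ is normal (which follows from the normality of $q|_s$, since normality of $q|_s$ is by definition normality of $q|_{\lbar{s}}$), and the same conclusion follows. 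In either case $\overline{q|_{\lbar{s}}} = q|_{\lbar{s}}$ because $q|_{\lbar{s}}$ is already in $\nr$, so we obtain $\overline{\red(q|_{u_i})} <_n q|_{\lbar{s}}$.

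Finally, reading off the leading term from $\red(q|_s) = q|_{\lbar{s}} + \sum_i c_i\, \red(q|_{u_i})$ gives $\overline{q|_s} = \overline{\red(q|_s)} = q|_{\lbar{s}}$, as desired. The argument is essentially bookkeeping; the only subtle point is to be careful that "normal" propagates from $q|_s$ to $q|_{\lbar{s}}$ and that this is exactly the extra hypothesis needed to trigger the weakly monomial order condition in the type~I case. No composition or diamond argument is needed at this stage.
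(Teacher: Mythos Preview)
Your proof is correct and follows essentially the same approach as the paper's own proof: write $s=\lbar{s}+\sum_i c_i u_i$ with $u_i<_n\lbar{s}$, use normality to get $q|_{\lbar{s}}\in\nr$, and then split into the type~I and type~II cases of Definition~\ref{defweakmonomial} (via Proposition~\ref{lemma:weakmonomial}) to conclude that each $\overline{q|_{u_i}}<_n q|_{\lbar{s}}$. Your write-up is in fact slightly more explicit than the paper's in tracking the role of~$\red$ when passing from $\bfk\frakM(\Delta_n X)$ to $\bfk\nr$, but the argument is the same.
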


\begin{proof}
Let $s = \lbar{s} + \sum_i c_i s_i$ where $0\neq c_i\in \bfk$ and $s_i <_n \lbar{s}$.
Then we have $q|_s = q|_{\lbar{s}} + \sum_i c_i q|_{s_i}$. Since $q|_s$ is
normal, it follows that $q|_{\lbar{s}} \in \nr$. Thus
$\lbar{q|_{\lbar{s}}} = q|_{\lbar{s}}$. We consider the following two
cases.

\noindent
{\bf Case 1.} Suppose $q\in\calr_{n,\iii}^\star$. Then
$\lbar{q|_{s_i}} <_n \lbar{q|_{\lbar{s}}} = q|_{\lbar{s}}$ by
Definition \ref{defweakmonomial} and Proposition \mref{lemma:weakmonomial}. This gives
$\lbar{q|_s} = \lbar{q|_{\lbar{s}}}=q|_{\lbar{s}}$.

\noindent
{\bf Case 2.} Suppose $q\in \calr_{n,\ii}^\star$. Since $q|_s$ is mormal, we have $q|_{\lbar{s}}$ is normal and so $\lbar{q|_{s_i}} < \lbar{ q|_{\lbar{s}} } = q|_{\lbar{s}}$
by Definition \mref{defweakmonomial} and Proposition \mref{lemma:weakmonomial}.
Hence $\lbar{q|_s} = q|_{\lbar{s}}$.
\end{proof}

\section{Composition-Diamond lemma}
\mlabel{sec:cd}
In this section, we establish the Composition-Diamond lemma for the free differential Rota-Baxter algebra of order $n$ defined in Theorem~\mref{thm:diff}.

\begin{defn}
Let $X$ be a set, $\star_1$, $\star_2$ two distinct symbols not in
$X$ and $\Delta_n X^{\star_1, \star_2} := \Delta_n (X\cup
\{\star_1,\star_2\})$.

\begin{enumerate}
\item
We define $\ncrbw(\Delta_n X^{\star_1,\star_2})$ in the same way as for $\calr(\dnx)$ with $X$ replaced by $X\cup \{\star_1, \star_2\}$.
\item
 We define a {\bf
$(\star_1,\star_2)$-DRB monomial on
$\Delta_n X$ } to be an expression in $\ncrbw(\Delta_n
X^{\star_1,\star_2})$ with exactly one occurrence of $\star_1$ and
exactly one occurrence of $\star_2$. The set of all $(\star_1,
\star_2)$-DRB monomials on $\Delta_n X$ is denoted by $\ncrbw_n^{\star_1, \star_2}$.
\item
For $q\in\ncrbw_n^{\star_1, \star_2}$ and $u_1, u_2\in \bfk
\nr$, we define
$$q|_{u_1,u_2} := q|_{\star_1 \mapsto u_1, \star_2 \mapsto u_2}$$
to be the bracketed monomial obtained by
replacing the letter $\star_1$ (resp. $\star_2$) in $q$ by $u_1$
(resp. $u_2$) and call it a {\bf $(u_1,u_2)$-monomial on $\Delta_n X$ }.
\item
The element $q|_{u_1,u_2}$ is called {\bf normal} if $q|_{\lbar{u}_1,\lbar{u}_2}$ is in $\nr$. In other words, if $\red(q|_{\lbar{u}_1,\lbar{u}_2}) = q|_{\lbar{u}_1,\lbar{u}_2}$.
\end{enumerate}
\mlabel{def:dDRBmon}
\end{defn}

A $(u_1,u_2)$-DRB monomial on $\Delta_n
X$ can also be recursively defined by
$q|_{u_1,u_2} := (q^{\star_1}|_{u_1})|_{u_2},$
where $q^{\star_1}$ is $q$ when $q$ is regarded as a
$\star_1$-DRB monomial on the set
$\Delta_n X^{\star_2}$. Then $q^{\star_1}|_{u_1}$ is in
$\ncrbw^{\star_2}(\Delta_n X)$. Similarly, we have
$q|_{u_1,u_2} := (q^{\star_2}|_{u_2})|_{u_1}.$

\begin{defn}
\begin{enumerate}
\item Let $u,w \in \nr$. We call $u$ a {\bf subword} of $w$ if there is a $q\in \nrs$ such that $w=q|_u$.
\item
Let $u_1$ and $u_2$ be two subwords of $w$. Then $u_1$ and $u_2$ are called {\bf separated} if $u_1, u_2\in \nr$ and there is a $q\in \ncrbw^{\star_1,
\star_2}(\Delta_n X)$ such that $w=q|_{u_1,u_2}$.
\item
Let $u = u_1\cdots u_k\in \nr $ be the standard form. The integer $k$ is called
the {\bf breadth} of $u$ and is denoted by $\mathrm{bre}(u)$.
\item
Let $f,g\in \nr$. A pair $(u,v)$ with $u,v\in \nr$ is called an {\bf intersection pair} for $(f,g)$ if $w:=fu=vg$ or $w:=uf=gv$ is a differential Rota-Baxter monomial and satisfies
$\max\{\mathrm{bre}(f),\mathrm{bre}(g)\}<\mathrm{bre}(w)<\mathrm{bre}(f) + \mathrm{bre}(g)$. In this case $f$ and $g$ are called {\bf overlapping}.
\end{enumerate}
\end{defn}

There are three kinds of compositions.

\begin{defn}
Let $f,g\in \mathbf{k}\nr$ be monic with respect to $<_n$.

\begin{enumerate}
\item If $\overline{f}\in \nr P(\nr)$, then define a {\bf composition of
right multiplication} to be $fu$ where $u\in P(\nr)\nr$. We similarly define a {\bf composition of
left multiplication}.
\item If there is an intersection pair $(u,v)$ for $(\overline{f},
\overline{g})$ with $w:=\overline{f}u=v\overline{g}$ (resp. $w:=u\overline{f}=\overline{g}v$), then we denote
$$(f,g)_w :=(f,g)^{u,v}_{w} := fu-vg\ (\text{resp. }uf-gv)$$ and call it an {\bf intersection
composition} of $f$ and $g$.
\item If there is $q\in \nrs$ such that
$w:=\overline{f}=q|_{\overline{g}}$, then we denote $(f,g)_w
:=(f,g)^{q}_{w} := f-q|_g$ and call it an {\bf inclusion
composition} of $f$ and $g$ with respect to $q$. Note that in this case, $q|_g$ is normal.
\end{enumerate}
\end{defn}

In the last two cases, $w$ is called the {\bf ambiguity} of the composition.

\begin{defn}
Let $S\subseteq \mathbf{k}\nr$ be a set of monic differential Rota-Baxter polynomials and $w\in \nr$.

\begin{enumerate}
\item  An element $g$ in $\bfk \nr$ is called {\bf trivial modulo $[S]$} if
$g = \sum_i c_i q_i|_{s_i},$
where, for each $i$, we have $0\neq c_i\in \bfk$, $q_i\in \nr^\star$, $s_i\in S$ such that $q_i|_{s_i}$
is normal and $q_i|_{\overline{s_i}} \leq_n \overline{g}$.
If this is the case, we write $g \equiv 0  \text{ mod }  [S].$

\item  The composition of right (resp. left) multiplication $fu$ (resp. $uf$) is called {\bf trivial modulo
$[S]$} if $fu \equiv 0 $ mod $[S]$ (resp. $uf \equiv 0 $ mod $[S]$).

\item For $u,v\in \mathbf{k}\nr$, we call $u$ and $v$
{\bf congruent modulo $[S,w]$} and denote this by $$u \equiv v \text
{ mod } [S,w]$$
if $u-v=0$, or if $u-v = \sum_{i}c_iq_i|_{s_i}$, where $0\neq c_i\in\bfk$, $q_i\in \nrs$, $s_i\in S$ such that $q_i|_{s_i}$ is normal and $q_i|_{\lbar{s_i}} <_n w$.
\item For $f,g\in \mathbf{k}\nr$ and suitable $u,v$ or $q$
that give an intersection composition $(f,g)^{u,v}_{w}$ or an including composition $(f,g)^{q}_{w}$, the composition is called
{\bf trivial modulo $[S,w]$} if $$(f,g)^{u,v}_{w} \text{ or }
(f,g)^{q}_{w}\equiv 0 \text{ mod } [S,w].$$

\item The set $S\subseteq \mathbf{k}\nr$ is a {\bf Gr\"{o}bner-Shirshov basis} if all
compositions of right multiplication and left multiplication are trivial modulo $[S]$, and, for $f, g\in S$, all
intersection compositions $(f,g)^{u,v}_{w}$ and all inclusion compositions $(f,g)^{q}_{w}$ are trivial modulo $[S,w]$.
\end{enumerate}
\mlabel{def:trivial}
\end{defn}

We give some preparatory lemmas before establishing the Composition-Diamond Lemma.

\begin{lemma}
Let $S\subseteq \mathbf{k}\nr$ with $d(S)\subseteq S$. If each composition of left multiplication and right multiplication of
$S$ is trivial modulo $[S]$, then $q|_s$ is trivial modulo $[S]$ for every $q \in \calr_n^\star$ and $s
  \in S$.
\mlabel{lemma:normalexp}
\end{lemma}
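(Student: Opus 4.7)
The plan is to induct on the pair $(\dep(q), \mathrm{bre}(q))$ ordered lexicographically. The base case $q = \star$ is immediate, since $q|_s = s = 1 \cdot (\star)|_s$ trivially satisfies the triviality conditions (normality and $\lbar{s} \leq_n \lbar{s}$). For the inductive step, I first reduce the type I case to type II: if $q = p|_{d^\ell(\star)}$ with $\ell \geq 1$ and $p \in \calr_{n,\iii}^\star$, then $q|_s = p|_{d^\ell(s)}$, and since $d(S) \subseteq S$ we have $d^\ell(s) \in S$, so the argument for $(p, d^\ell(s))$ transfers directly. Thus I may assume $q$ is type II and $q \neq \star$, and write $q = u_0 u_1 \cdots u_k$ in standard alternating form with $\star$ occurring in a unique $u_j$.

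If $u_j = P(q')$ for some $q'\in\nrs$, then $\dep(q') < \dep(q)$, and by the induction hypothesis $q'|_s = \sum_i c_i p_i|_{s_i}$ in trivialising form. Setting $\tilde{q}_i := u_0 \cdots u_{j-1} P(p_i) u_{j+1} \cdots u_k \in \nrs$, we obtain $q|_s = \sum_i c_i \tilde{q}_i|_{s_i}$. The alternating structure around the indecomposable slot $P(\cdot)$ is preserved, hence each $\tilde{q}_i|_{s_i}$ is normal, and the inequality $\tilde{q}_i|_{\lbar{s_i}} \leq_n \lbar{q|_s}$ follows from the monotonicity of $P$ in the definition of $<_n$ together with iterated application of Lemma \mref{lemma:mulcom}.

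If instead $u_j = v_1 \star v_2 \in S(\Delta_n X^\star)$ with $v_1, v_2 \in M(\Delta_n X)$, the only obstructions to normality of $q|_{\lbar{s}}$ are $P$-on-$P$ collisions at the left interface (occurring precisely when $v_1 = 1$, $j \geq 1$, and $\lbar{s} \in P(\nr)\nr$) or at the symmetric right interface. If no such collision occurs, then $q|_s$ is already normal and the decomposition $q|_s = 1\cdot q|_s$ does the job. Otherwise, assuming the left collision, the product $u_{j-1} \cdot s$ is by definition a composition of left multiplication of $s$; the hypothesis on $S$ gives $u_{j-1} s = \sum_i c_i p_i|_{s_i}$ in trivialising form, and substitution yields $q|_s = \sum_i c_i \bigl(u_0 \cdots u_{j-2} \cdot p_i \cdot u_{j+1} \cdots u_k\bigr)|_{s_i}$. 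A symmetric argument with composition of right multiplication handles a right collision, and simultaneous collisions are resolved in two successive steps.

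The main obstacle is the bookkeeping in this last case: the composite expressions $u_0 \cdots u_{j-2} \cdot p_i \cdot u_{j+1} \cdots u_k$ need to be re-assembled as bona fide $\star$-DRB monomials $\tilde{q}_\alpha$ so that $\tilde{q}_\alpha|_{s_\alpha}$ is normal and $\tilde{q}_\alpha|_{\lbar{s_\alpha}} \leq_n \lbar{q|_s}$. The key lever is the strict decrease $\lbar{u_{j-1} s} <_n \lbar{q|_s}$, which is ensured by Proposition \mref{lemma:weakmonomial} because the outer factors $u_0 \cdots u_{j-2}$ and $u_{j+1} \cdots u_k$ strictly enlarge the leading monomial. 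This strict decrease drives a secondary induction (on the ambiguity, analogous to the proof of Lemma \mref{normalequiv}) that closes the argument.
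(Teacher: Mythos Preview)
Your approach is essentially the same as the paper's: reduce type~I to type~II via $d(S)\subseteq S$, then handle the type~II case by the standard Rota-Baxter argument. The paper's own proof is extremely terse---it simply cites \cite[Lemma~3.6]{BCD} for the type~II case and gives the one-line reduction for type~I---so you have in fact supplied more detail than the paper does.

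One remark on your final paragraph: the ``secondary induction on the ambiguity'' driven by the strict inequality $\lbar{u_{j-1}s}<_n\lbar{q|_s}$ is not quite the right lever, because when there are no outer factors (e.g.\ $q=P(w)\star$ or $q=P(w)\star P(w')$) there is no strict decrease. In those boundary cases, however, $q|_s$ is itself (or is built directly from) a left/right multiplication composition, so the hypothesis applies without induction. After resolving one collision and substituting $u_{j-1}s=\sum_i c_i p_i|_{s_i}$, any residual collision in $(u_0\cdots u_{j-2})\,p_i\,(v_2 u_{j+1}\cdots u_k)$ is again a multiplication composition (now of $s_i$), and one iterates. The correct bookkeeping measure is therefore not $\lbar{q|_s}$ but rather the number of unresolved $P$--$P$ adjacencies together with the depth, which is how \cite{BCD} organises it. Your sketch is otherwise sound.
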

\begin{proof}
We have the following two cases to consider.
\smallskip

\noindent
{\bf Case 1.}  $q\in \calr_{n,\iii}^\star$. This case is similar to the proof of Lemma 3.6 in \mcite{BCD}.

\noindent
{\bf Case 2.} $q\in \calr_{n,\ii}^\star$. Then $q =
p|_{d^{\ell}(\star)}$ for some $p\in \nrs$ and
$\ell{\geq 1}$. Choose such an $\ell$ to be maximal so that
$p$ is in $\calr_{n,\iii}^\star$. Since $d(S) \subseteq S$, by Case 1 that has been proved above, the result holds.
\end{proof}

\begin{lemma}
Let $S \subseteq \bfk \nr$ with $d(S) \subseteq S$ be a Gr\"{o}bner-Shirshov basis. Let $s_1,s_2\in S$, $q_1,q_2 \in \nrs$ and $w\in \nr$ such that $w= q_1|_{\overline{s_1}} = q_2|_{\overline{s_2}}$,
where $q_i|_{s_i}$ is normal for $i=1,2$. If $\overline{s_1}$ and
$\overline{s_2}$ are separated in $w$, then $q_1|_{s_1} \equiv q_2|_{s_2}$ mod $[S,w]$.
\mlabel{lemma:separated}
\end{lemma}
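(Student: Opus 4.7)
The hypothesis of separation yields a $(\star_1,\star_2)$-DRB monomial $q\in \ncrbw(\Delta_n X^{\star_1,\star_2})$ with $w = q|_{\overline{s_1},\overline{s_2}}$, $q_1 = q|_{\star,\overline{s_2}}$, and $q_2 = q|_{\overline{s_1},\star}$, where in the last two expressions the remaining $\star_i$ slot has been relabelled $\star$. Write $s_i = \overline{s_i} + r_i$ with $r_i \in \bfk\nr$ supported on monomials $u_{i,j} <_n \overline{s_i}$. A direct bilinearity computation (adding and subtracting $q|_{s_1,s_2}$) gives the identity
\[
q_1|_{s_1} - q_2|_{s_2} \,=\, q|_{s_1,\overline{s_2}} - q|_{\overline{s_1},s_2} \,=\, q|_{r_1,s_2} - q|_{s_1,r_2}.
\]
Thus it suffices to show that each summand $q|_{u_{1,j},s_2}$ and $q|_{s_1,u_{2,j}}$ is trivial modulo $[S,w]$; I describe the treatment of $q|_{u_{1,j},s_2}$, the symmetric case being identical.

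The formal substitution $q|_{u_{1,j},\star_2}$ lies in $\bfk\frakM(\Delta_n X^{\star_2})$ but need not itself be a DRB monomial, because plugging $u_{1,j}$ into the $\star_1$-slot may produce $P(\cdot)P(\cdot)$ adjacencies with the context. Regarding $\star_2$ temporarily as an auxiliary letter and applying the reduction map of Eq.~(\mref{eq:red}), I obtain a decomposition
\[
\red\bigl(q|_{u_{1,j},\star_2}\bigr) \,=\, \sum_{k} d_{j,k} \, p_{j,k}, \qquad p_{j,k}\in\nrs,
\]
where each $p_{j,k}$ contains exactly one copy of $\star_2$ (again relabelled $\star$) because the Rota-Baxter relations do not duplicate letters. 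Since $I_{\mathrm{RB}}(\Delta_n X^{\star_2})$ is an operated ideal, it is stable under the substitution $\star_2 \mapsto s_2$, so reduction commutes with this substitution, yielding the equality $q|_{u_{1,j},s_2} = \sum_k d_{j,k}\, p_{j,k}|_{s_2}$ in $\bfk\nr$.

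It remains to verify, for each pair $(p_{j,k},s_2)$, the two conditions required by Definition~\mref{def:trivial}: (a) that $p_{j,k}|_{s_2}$ is normal, i.e., $p_{j,k}|_{\overline{s_2}}\in\nr$; and (b) that $p_{j,k}|_{\overline{s_2}} <_n w$. Condition (b) follows by combining Proposition~\mref{lemma:weakmonomial} with Lemma~\mref{normalequiv}: since $u_{1,j} <_n \overline{s_1}$ and the normality of $q_1|_{s_1}$ allows the weakly monomial order to be applied at the position of $\star_1$, one deduces $\overline{q|_{u_{1,j},\overline{s_2}}} <_n \overline{q|_{\overline{s_1},\overline{s_2}}} = w$, and the leading monomial of each $p_{j,k}|_{\overline{s_2}}$ is bounded by this quantity. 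The principal obstacle is condition (a): substituting $\overline{s_2}$ into the restructured $p_{j,k}$ may re-introduce $P(\cdot)P(\cdot)$ patterns, so $p_{j,k}|_{\overline{s_2}}$ need not lie in $\nr$. I plan to overcome this by an induction on the depth of $q$ together with a case analysis on whether the $\star$-slot in each $p_{j,k}$ sits in a type I or a type II position, paralleling Cases~1--3 in the proof of Proposition~\mref{lemma:weakmonomial}; the structural input is that $w = q|_{\overline{s_1},\overline{s_2}} \in \nr$ already encodes a compatible boundary condition on the context of $\overline{s_2}$ in $q$, and this condition propagates through the Rota-Baxter reductions producing the $p_{j,k}$, at the cost of some further bookkeeping in which recalcitrant terms are refined by a secondary round of reduction.
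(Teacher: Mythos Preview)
Your bilinear decomposition $q_1|_{s_1} - q_2|_{s_2} = q|_{r_1,s_2} - q|_{s_1,r_2}$ is exactly the paper's computation, and your argument for the bound~(b) via the weakly monomial order at the $\star_1$-slot is also how the paper proceeds. The only divergence is in how you propose to handle condition~(a).

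The obstacle you flag there is not genuine: it is precisely what Lemma~\mref{lemma:normalexp} was set up to dispatch. Once you have each $p_{j,k}\in\nrs$ and $s_2\in S$, that lemma gives $p_{j,k}|_{s_2}=\sum_m c_m\,q_m|_{t_m}$ with $t_m\in S$, each $q_m|_{t_m}$ normal, and $q_m|_{\overline{t_m}}\leq_n\overline{p_{j,k}|_{s_2}}$. Chaining this with your bound~(b) finishes the proof. Your planned induction on $\dep(q)$ with a type~I/type~II case split would amount to reproving Lemma~\mref{lemma:normalexp} from scratch.

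The paper is even terser: it skips your intermediate reduction of $q|_{u_{1,j},\star_2}$ and invokes Lemma~\mref{lemma:normalexp} in a single stroke on $q|_{s_1,v_j}$, obtaining both the normal representation and the bound $\leq_n \overline{q|_{s_1,v_j}}<_n w$ simultaneously. Your explicit reduction step is not wasted---it makes the hypothesis $p_{j,k}\in\nrs$ of Lemma~\mref{lemma:normalexp} visible, a point the paper glosses over---but it forces you to bound each $\overline{p_{j,k}|_{s_2}}$ individually (with attendant worries about cancellations among the $p_{j,k}$), whereas the paper's single application of the lemma delivers the global bound for free.
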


\begin{proof}
Let $q\in \ncrbw_n^{\star_1, \star_2}$ be the
$(\star_1,\star_2)$-DRB monomial obtained by replacing the occurrence of $\overline{s_1}$ in $w$ by
$\star_1$ and the occurrence of $\overline{s_2}$ in $w$ by
$\star_2$. Then we have
$$q^{\star_1}|_{\overline{s_1}} = q_2, q^{\star_2}|_{\overline{s_2}}
= q_1 \text{ and } q|_{\overline{s_1}, \overline{s_2}} =
q_1|_{\overline{s_1}} = q_2|_{\overline{s_2}},$$
where in the first two equalities, we have identified $\ncrbw_n^{\star_2}$ and
$\ncrbw_n^{\star_1}$ with $\nrs$. Let
$s_1-\overline{s_1} = \sum_{i} c_iu_i$ and $s_2-\overline{s_2} =
\sum_{j} d_jv_j$ with $0\neq c_i,d_j\in \mathbf{k}$ and $u_i,v_j\in
\nr$ such that $u_i <_n \lbar{s_1}$ and $v_j <_n \lbar{s_2}$. Then by the linearity of $s_1$ and $s_2$ in
$q|_{s_1,s_2}$, we have
\begin{align*}
q_1|_{s_1} - q_2|_{s_2} &= (q^{\star_2}|_{\overline{s_2}})|_{s_1} -
(q^{\star_1}|_{\overline{s_1}})|_{s_2}\\
&= q|_{s_1,\overline{s_2}} - q|_{\overline{s_1},s_2}\\
&=q|_{s_1,\overline{s_2}} - q|_{s_1,s_2} + q|_{s_1,s_2} -
q|_{\overline{s_1},s_2}\\
&= -q|_{s_1, s_2-\overline{s_2}} + q|_{s_1-\overline{s_1},s_2}\\
&= -(q^{\star_2}|_{s_2-\overline{s_2}})|_{s_1} +
(q^{\star_1}|_{s_1-\overline{s_1}})|_{s_2}\\
&= -\sum_{j} d_j(q^{\star_2} | _{v_j})|_{s_1} + \sum_{i}
c_i(q^{\star_1} | _{u_i})|_{s_2}\\
&= -\sum_{j} d_j q|_{s_1,v_j} + \sum_{i} c_i q|_{u_i, s_2}.
\end{align*}
From Lemma \mref{lemma:normalexp}, for each $j$, we may suppose that
$$q|_{s_1,v_j} = (q|_{s_1})|_{v_j} = \sum_{\ell} d_{j\ell} p_\ell|_{v_{j\ell}},$$
where $0\neq d_{j\ell} \in \bfk$, $p_\ell \in \nrs$ , $v_{j\ell} \in S$ such that $p_\ell|_{v_{j\ell}}$ is normal and $\lbar{p_\ell|_{v_{j\ell}}} \leq_n \lbar{(q|_{s_1})|_{v_j}} = \lbar{q|_{s_1,v_j}}$.
Since $(q^{\star_1}|_{s_1})|_{\lbar{s_2}} = q|_{s_1, \lbar{s_2}} = (q^{\star_2}|_{\lbar{s_2}})|_{s_1} = q_1|_{s_1}$ is normal and $v_j <_n\lbar{s_2}$, by Definition~\mref{defweakmonomial} and Proposition~\mref{lemma:weakmonomial}, we have
$$\lbar{q|_{s_1, v_j}} = \lbar{(q^{\star_1}|_{s_1})|_{v_j}} <_n \lbar{(q^{\star_1}|_{s_1})|_{\lbar{s_2}}}
 = \lbar{q_1|_{s_1}} = q_1|_{\lbar{s_1}} = w.$$
 So we have
$$\lbar{p_\ell|_{v_{j\ell}}} \leq_n w.$$
With a similar argument to the case of $q|_{u_i, s_2}$, we can obtain that $q_1|_{s_1} \equiv q_2|_{s_2}$ mod $[S,w]$.
\end{proof}

For $k\geq 1$, write $\frakM_k:=\frakM_{\Omega,k}(\Delta_n X)$ where
$\Omega=\{d,P\}$.  For $q\in \nrs$, we define the {\bf depth
  $\mathrm{dep}_{\star}(q)$ of $\star$ in $q$} by induction on $k\geq 0$ such
that $q\in \nrs \cap \frakM_k$.  Let $k=0$. Then $q\in M(\dnx^\star)$ and we
define $\mathrm{dep}_\star (q)=0$. Suppose $\mathrm{dep}_\star(q)$ has been
defined for $q\in \nrs \cap \frakM_m, m\geq 0,$ and consider $q\in \nrs\cap
\frakM_{m+1}$. Then we have $q=q_1\cdots q_\ell$ with each $q_i$ in $\Delta_nX
\cup \{\star\}$ or $\lc \frakM(\dnx^\star)\rc\cap \frakM_{m+1}, 1\leq i\leq
\ell,$ and with $\star$ appearing in a unique $q_i$.  Suppose the unique $q_i$
is in $\Delta_nX \cup \{\star\}$. Then define $\mathrm{dep}_\star(q)=0$. Suppose
the unique $q_i$ is in $\lc \frakM(\dnx^\star)\rc\cap \frakM_{m+1}$. Then
$q_i=\lc \tilde{q}_i\rc$ with $\tilde{q}_i\in \frakM(\dnx^\star)\cap
\frakM_{m}$. Thus $\tilde{q}_i$ is in $\nrs\cap \frakM_m$ and
$\mathrm{dep}_\star(\tilde{q}_i)$ is defined by the induction hypothesis. We
then define $\mathrm{dep}_\star(q):=\mathrm{dep}_\star(\tilde{q}_i)+1.$ For
example, $\mathrm{dep}_{\star}(q) = 1$ if $q=P(\star)$ and
$\mathrm{dep}_{\star}(q) = 2$ if $q=P(xP(\star))$.

For the purpose of the proof the next lemma, we describe the relative location of two bracketed subwords in the more precise notion of placements (or occurrences~\mcite{BKV}) in a bracketed word. See~\cite{ZG} for details. But note that we focus on words in $\nr$ as a subset of $\frakM(\dnx)$.

\begin{defn}
Let $w, u\in \nr$ and $q\in \nrs$ be such that $w=q|_u$. Then we call the pair $(u,q)$ a {\bf placement} (or {\bf occurrence}) of $u$ in $w$.
\end{defn}

The pair $(u,q)$ corresponds to the pair $(q,u)$ in~\cite[Chapter 2]{BKV} where $q$ is called the prefix.
We note that a placement $(u,q)$ gives an appearance of $u$ as a subword or subterm of $w=q|_u$. A placement is more precise than a subword since a placement emphasizes the location of a subword. For example $u=x$ has two appearances in $w=x\lc x\rc$ which are differentiated by the two placements $(u,q_1)$ and $(u,q_2)$ where $q_1=\star\lc x\rc$ and $x\lc \star\rc$.

\begin{defn}
Let $w, u_1, u_2\in \nr$ and $q_1, q_2\in \nrs$ be such that
\begin{equation}q_1|_{u_1}=w=q_2|_{u_2}.\mlabel{eq:plas}
\end{equation}
The two \plas $(u_1,q_1)$ and $(u_2,q_2)$ are said to be
\begin{enumerate}
\item
{\bf separated} if there exists an element $q$ in $\nr^{\star_1,\star_2}$ and $a,b \in \nr$ such that $q_1|_{\star_1}=q|_{\star_1,\,b}$, $q_2|_{\star_2}=q|_{a,\,\star_2}$ and $w=q|_{a,\,b}$;
\mlabel{item:bsep}
\item
{\bf nested} if there exists an element $q$ in $\nrs$ such that \ws{either} $q_2=q_1|_q$ or $q_1=q_2|_q$;
\mlabel{item:bnes}
\item
 {\bf intersecting} if there exist an element $q$ in $\nrs$ and elements $a, b, c$
 in $\nr\backslash\{1\}$ such that $w=q|_{abc}$ and either
\begin{enumerate}
\item \mlabel{item:left2}
$ q_1=q|_{\star c}, q_2=q|_{a\star}$; or
\item \mlabel{item:right2}
$q_1=q|_{a\star}, q_2=q|_{\star c}$.
\end{enumerate}
\mlabel{item:bint}
\end{enumerate}
\mlabel{defn:bwrel}
\end{defn}

By taking $u=abc$, it is easy to see that $(u_1,q_1)$ and $(u_2,q_2)$ are intersecting (in case
(i)) if and only if there are $v_1,v_2 \in \nr$ such that $w=q|_{u}$, $u:= u_1
v_1 = v_2 u_2$ and
$$ \max \{  \mathrm{bre}(u_1),\mathrm{bre}(u_2) \} < \mathrm{bre}(u)< \mathrm{bre}(u_1)+\mathrm{bre}(u_2).$$
This corresponds to the above definition via the relations~$(u,v_1,v_2)=(abc,c,a)$.

\begin{theorem}
Let $w$ be a bracketed word in $\nr$. For any two \plas $(u_1, q_1)$ and $(u_2,q_2)$ in $w$, exactly one of the following is true:
\begin{enumerate}
\item
$(u_1, q_1)$ and $(u_2,q_2)$ are separated;
\item
$(u_1, q_1)$ and $(u_2,q_2)$ are nested;
\item
$(u_1, q_1)$ and $(u_2,q_2)$ are intersecting.
\end{enumerate}
\mlabel{thm:thrrel}
\end{theorem}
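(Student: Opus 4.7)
My plan is to prove the trichotomy in two halves: first verify pairwise exclusivity of the three relations, then prove at least one of them always holds by induction on the depth of $w$.

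For mutual exclusivity, a short breadth/containment argument suffices. If $(u_1,q_1)$ and $(u_2,q_2)$ are nested, then one of $u_1,u_2$ is a subterm of the other, forcing $\mathrm{bre}(u_i)\leq\mathrm{bre}(u_j)$ for some ordering of the indices. If they are intersecting with $w=q|_{abc}$ and $a,b,c\in\calr_n\setminus\{1\}$, then $\max\{\mathrm{bre}(u_1),\mathrm{bre}(u_2)\} < \mathrm{bre}(abc) < \mathrm{bre}(u_1)+\mathrm{bre}(u_2)$, which precludes one $u_i$ being a subterm of the other (ruling out nested), and the nonempty common middle $b$ rules out separation into disjoint holes. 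If they are separated, the existence of a two-hole context $q$ with $u_1$ and $u_2$ in disjoint holes contradicts both the subterm-containment required by nested and the shared-middle required by intersecting.

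For existence, I induct on the smallest $k$ with $w\in\frakM_{\{d,P\},k}(\dnx)$. The base case $k=0$ reduces to a plain-word problem: $w=a_1\cdots a_m$ with $a_i\in\dnx$, and each placement $(u_i,q_i)$ is determined by a contiguous index interval $[\alpha_i,\beta_i]\subseteq\{1,\dots,m\}$ with $u_i=a_{\alpha_i}\cdots a_{\beta_i}$. The three combinatorial possibilities for two intervals -- disjoint, containment, proper overlap -- correspond exactly to separated, nested, and intersecting, respectively, and in each case the witnessing contexts are built by explicit substitution. For proper overlap with $\alpha_1<\alpha_2\leq\beta_1<\beta_2$, for example, one takes $a:=a_{\alpha_1}\cdots a_{\alpha_2-1}$, $b:=a_{\alpha_2}\cdots a_{\beta_1}$, $c:=a_{\beta_1+1}\cdots a_{\beta_2}$, giving $u_1=ab$, $u_2=bc$ and $q=a_1\cdots a_{\alpha_1-1}\star a_{\beta_2+1}\cdots a_m$.

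For the inductive step, I write $w=w_1\cdots w_\ell$ in its standard form, so each $w_j$ is either in $\dnx$ or of the form $P(\tilde w_j)$. Each placement $(u_i,q_i)$ is either \emph{top-level} (so $u_i=w_{\alpha_i}\cdots w_{\beta_i}$ for a contiguous range, with $q_i$ the one-hole context obtained by replacing that range by $\star$) or \emph{internal} (so the hole of $q_i$ sits strictly inside some $P$-atom $w_{j_i}=P(\tilde w_{j_i})$, and $(u_i,q_i)$ descends to a placement of $u_i$ inside $\tilde w_{j_i}$). I then do case analysis on the four combinations: (top, top) reduces to the interval analysis of the base case; (top, internal) is nested if the internal atom's index lies in the top-level range and separated otherwise; (internal, internal) is separated if the two $P$-atoms differ, and otherwise reduces to a strictly shallower placement problem inside $\tilde w_j$, to which the inductive hypothesis applies.

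The main obstacle I expect is bookkeeping rather than conceptual difficulty: lifting witnesses $(q,a,b)$ or $(q,a,b,c)$ produced by the inductive hypothesis inside $\tilde w_j$ back to witnesses for $w$ by rewrapping with the surrounding context $w_1\cdots w_{j-1}\,P(\,\cdot\,)\,w_{j+1}\cdots w_\ell$. One must check that the lifted $q$ still lies in $\calr_n^{\star_1,\star_2}$ (resp.\ $\calr_n^\star$) and that the required substitution identities -- $q|_{u_1,u_2}=w$ together with $q_1=q|_{\star_1,u_2}$ and $q_2=q|_{u_1,\star_2}$ in the separated case, and their analogues for the nested and intersecting cases -- survive the rewrapping. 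This verification is routine from the universal property of the free operated monoid (Proposition~\mref{pp:freetm}), and assembling the four inductive cases yields the trichotomy.
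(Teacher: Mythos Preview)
Your proposal is correct and self-contained, but it takes a genuinely different route from the paper's proof. The paper does not argue the trichotomy directly: it invokes \cite[Theorem~4.11]{ZG}, which establishes the result for the ambient free operated monoid $\frakM_{\{P\}}(\Delta X)$, and then simply observes that $\nr\subseteq\ncrbw(\Delta X)\subseteq\frakM_{\{P\}}(\Delta X)$, so the statement restricts. Your approach, by contrast, proves the trichotomy from scratch via induction on depth and a top-level/internal case split on the standard decomposition into indecomposables. This buys independence from the external reference and makes the argument transparent, at the cost of the bookkeeping you anticipated.

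Two minor points. First, your induction should run over the filtration $\frakM_{\{P\},k}(\dnx)$ rather than $\frakM_{\{d,P\},k}(\dnx)$: in this paper $\nr=\ncrbw(\dnx)$ sits inside the operated monoid with the single operator $P$, the derivation $d$ having already been absorbed into the alphabet $\dnx$. Second, your mutual-exclusivity paragraph is a little informal; in particular, the claim that nestedness forces $\mathrm{bre}(u_i)\leq\mathrm{bre}(u_j)$ is not literally true when one placement sits inside a $P$-bracket of the other. The correct argument is that nested means $q_2=q_1|_{q'}$ (or vice versa), so the $\star$-position of $q_2$ lies inside that of $q_1$, whereas in the intersecting case the two $\star$-positions sit at the same level inside $q|_{abc}$ with neither contained in the other, and in the separated case the two $\star$-positions are disjoint inside a genuine two-hole context. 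Once phrased this way, exclusivity follows from uniqueness of factorization in the free operated monoid. With these adjustments your argument goes through.
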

\begin{proof}
Let $\frakM_{\{P\}}(\Delta X)$ denote the set of bracketed words on the set $\Delta X$ with the bracket given by $P$. By Theorem~\mref{thm:freediffrb}.\ref{it:bk3}, for the Rota-Baxter ideal $J_{\mathrm{RB}}$ of $\bfk \frakM_{\{P\}}(\Delta X)$ generated by the set
$$\{P(u)P(v)-P(uP(v))-P(P(u)v)-\lambda P(uv)\,|\, u, v\in \frakM_{\{P\}}(\Delta X)\},$$
we have
$$\bfk \ncrbw(\Delta X)\cong \bfk\frakM_{\{P\}}(\Delta X)/J_{\mathrm{RB}} \cong \bfk\frakM_{\{P,d\}}(X)/J_{\mathrm{DRB}}.$$
By~\cite[Theorem 4.11]{ZG}, the statement of the present theorem holds when $\nr$ is replaced by $\frakM_{\{P\}}(\Delta X)$.
Since $\ncrbw(\Delta X)$ and hence $\nr$ are subsets of $\frakM_{\{P\}}(\Delta X)$, the statement of the theorem remains true for $\ncrbw(\Delta X)$ and $\nr$.
\end{proof}

Now we are ready to prove the next result.

\begin{lemma}
Let $S\subseteq \mathbf{k}\nr$ with $d(S) \subseteq S$. If $S$ is a Gr\"{o}bner-Shirshov basis, then for each pair
$s_1,s_2\in S$ for which there exist $q_1,q_2\in \nrs$ and $w\in \nr$ such that
$w=q_1|_{\overline{s_1}} = q_2|_{\overline{s_2}}$ with
$q_1|_{s_1}$ and $q_2|_{s_2}$ normal, we have $q_1 | _{s_1} \equiv q_2 |
_{s_2}$ mod $[S,w]$.

\mlabel{lemma:basis}
\end{lemma}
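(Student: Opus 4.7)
The plan is to invoke Theorem~\mref{thm:thrrel} on the two placements $(\overline{s_1},q_1)$ and $(\overline{s_2},q_2)$ of $\overline{s_1}$ and $\overline{s_2}$ in $w$. Exactly one of three configurations arises: separated, nested, or intersecting. The separated case is precisely Lemma~\mref{lemma:separated} and requires no further argument.

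For the nested case, assume without loss of generality that $q_1=q_2|_q$ for some $q\in \nrs$, so that $\overline{s_2}=q|_{\overline{s_1}}$. The pair $(s_2,s_1)$ then admits the inclusion composition $(s_2,s_1)^q_{\overline{s_2}}=s_2-q|_{s_1}$, which by the Gr\"obner--Shirshov hypothesis is trivial modulo $[S,\overline{s_2}]$; write
$$s_2-q|_{s_1}=\sum_i c_i\, p_i|_{t_i},$$
with each $p_i|_{t_i}$ normal, $t_i\in S$, and $p_i|_{\overline{t_i}}<_n \overline{s_2}$. Using linearity inside $q_2$,
$$q_1|_{s_1}-q_2|_{s_2}=q_2|_{q|_{s_1}}-q_2|_{s_2}=-\sum_i c_i\, q_2|_{p_i|_{t_i}}.$$
Each $q_2|_{p_i|_{t_i}}$ is of the form $(q_2 \text{ with } \star\mapsto p_i)\big|_{t_i}$, i.e.\ an $s$-monomial for $t_i\in S$; by Lemma~\mref{lemma:normalexp} (which applies since $d(S)\subseteq S$ and all left/right multiplication compositions are trivial) each such expression reduces to a $\bfk$-linear combination of normal $s$-monomials over $S$.

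For the intersecting case, Theorem~\mref{thm:thrrel} furnishes $q\in \nrs$ and $v_1,v_2\in \nr$ with $w=q|_u$ and, say, $u=\overline{s_1}v_1=v_2\overline{s_2}$ (the symmetric subcase is identical). Then
$$q_1|_{s_1}-q_2|_{s_2}=q|_{s_1 v_1-v_2 s_2}=q\big|_{(s_1,s_2)_u^{v_1,v_2}},$$
and by hypothesis $(s_1,s_2)_u^{v_1,v_2}=\sum_i c_i\, p_i|_{t_i}$ with $p_i|_{t_i}$ normal, $t_i\in S$, and $p_i|_{\overline{t_i}}<_n u$. Applying Lemma~\mref{lemma:normalexp} to each $q|_{p_i|_{t_i}}$ again yields a $\bfk$-linear combination of normal $s$-monomials over $S$.

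The main obstacle in both non-separated cases is verifying the leading-term bound $\overline{(\cdot)}<_n w$ required by Definition~\mref{def:trivial}. This is where I would invoke the weakly monomial property (Proposition~\mref{lemma:weakmonomial}) together with Lemma~\mref{normalequiv}: in the nested case, since $p_i|_{\overline{t_i}}<_n \overline{s_2}$ and $q_2|_{s_2}$ is normal, applying $q_2$ preserves the strict inequality, giving $\overline{q_2|_{p_i|_{t_i}}}<_n q_2|_{\overline{s_2}}=w$; in the intersecting case, since $p_i|_{\overline{t_i}}<_n u$ and $q|_u=w$ with $q_1|_{s_1}$ (equivalently $q|_u$) normal at the outer level, the same principle yields $\overline{q|_{p_i|_{t_i}}}<_n w$. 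The subtlety here is the type~I/type~II dichotomy in Definition~\mref{defweakmonomial}: whenever the outer $\star$-monomial is of type~I (i.e.\ $\star$ sits under a $d$), the weakly monomial axiom demands that the normality hypothesis hold for the larger term. This is exactly what the normality assumptions on $q_1|_{s_1}$ and $q_2|_{s_2}$ supply, so the bounds push through. Once every summand in the rewriting of $q_1|_{s_1}-q_2|_{s_2}$ is shown to be a normal $s$-monomial with leading term strictly below $w$, the congruence $q_1|_{s_1}\equiv q_2|_{s_2}\bmod [S,w]$ follows.
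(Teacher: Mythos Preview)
Your proof is correct and follows essentially the same approach as the paper's: invoke Theorem~\mref{thm:thrrel} to split into the separated, nested, and intersecting cases, dispatch the separated case via Lemma~\mref{lemma:separated}, and in the remaining two cases use the triviality of the relevant inclusion or intersection composition together with Lemma~\mref{lemma:normalexp} and the weakly monomial property to push the leading-term bounds through. The only cosmetic differences are that the paper chooses the other orientation in the nested case (writing $q_2=q_1|_q$ rather than $q_1=q_2|_q$) and spells out the two-step chain of inequalities $\overline{p_{j\ell}|_{t_{j\ell}}}\leq_n \overline{\tilde p_j|_{t_j}}<_n w$ after applying Lemma~\mref{lemma:normalexp}, which you leave implicit.
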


\begin{proof}  Let $s_1,s_2\in S$, $q_1, q_2\in \nrs$ and $w\in \nr$ be such that
$w=q_1|_{\overline{s_1}} = q_2|_{\overline{s_2}}$. Let $(\lbar{s_1},q_1)$ and $(\lbar{s_2}, q_2)$ be the corresponding placements of $w$. By Theorem~\mref{thm:thrrel}, according to the relative location of the placements $(q_1,\lbar{s_1})$ and $(q_2,\lbar{s_2})$ in $w$, we have the following three cases to consider.
\smallskip

\noindent
{\bf Case 1.} The placements $(\lbar{s_1},q_1)$ and $(\lbar{s_2},q_2)$ are separated in
$w$. This case is covered by Lemma \ref{lemma:separated}.
\smallskip

\noindent
{\bf Case 2.} The placements $(\overline{s_1},q_1)$ and $(\overline{s_2},q_2)$ are intersecting in $w$. We only need to consider Case (i) of overlapping since the proof of Case (ii) is similar. Then by the remark after Definition~\mref{defn:bwrel}, there are $u,v \in \nr$ such that $w_1:=\overline{s_1}u=v\overline{s_2}$ is a subword in
$w$, where
$$ \max \{  \mathrm{bre}(\overline{s_1}),\mathrm{bre}(\overline{s_2}) \} < \mathrm{bre}(w_1)< \mathrm{bre}(\overline{s_1})+
\mathrm{bre}(\overline{s_2}).$$
Since $S$ is a Gr\"{o}bner-Shirshov
basis, we have
$$s_1u-vs_2 = \sum_j c_j p_j |_{t_j},$$
where $0\neq c_j\in\bfk$, $t_j \in S, p_j \in \calr^\star_n$ such that $p_j |_{t_j}$ is normal and $\overline{p_j |_{t_j}} = p_j
|_{\overline{t_j}} <_n \overline{s_1}u = v\overline{s_2} =
w_1$.

Let $q\in \ncrbw_n^{\star_1, \star_2}$ be obtained from
$q_1$ by replacing $\star$ by $\star_1$, and the $u$ on the right of
$\star$ by $\star_2$. Let $p\in \nrs$ be obtained
from $q$ by replacing $\star_1\star_2$ by $\star$. Then we have
$$q^{\star_2} |_u = q_1, q^{\star_1} |_v = q_2 \text{ and } p|_{\overline{s_1}u} =
q|_{\overline{s_1}, u} = q_1 |_{\overline{s_1}} =w,$$
where in the first two equalities, we have identified $\calr^{\star_2}_n$ and $\calr^{\star_1}_n$ with $\calr^\star_n$. Thus we have
$$q_1 |_{s_1} - q_2 |_{s_2} = (q^{\star_2}|_u)|_{s_1}
-(q^{\star_1}|_v)|_{s_2} = p|_{s_1u-vs_2} = \sum_j c_j p|_{p_j|_{t_j}} = \sum_j c_j \tilde{p_j}|_{t_j},$$
where $\tilde{p_j} := p|_{p_j} \in \nrs$. By Lemma \mref{lemma:normalexp}, for each $j$, we may suppose that
$$\tilde{p_j}|_{t_j} = \sum_{\ell} c_{j\ell} p_{j\ell}|_{t_{j\ell}},$$
where $0\neq c_{j\ell}\in\bfk$, $t_{jl} \in S, p_{jl} \in \calr^\star_n$, $p_{j\ell}|_{t_{j\ell}}$ is normal and $\lbar{p_{j\ell}|_{t_{j\ell}}} \leq_n \lbar{ \tilde{p_j}|_{t_j}}$. So
$$q_1 |_{s_1} - q_2 |_{s_2} =  \sum_j c_j \tilde{p_j}|_{t_j} = \sum_{j, \ell} c_j c_{j\ell} p_{j\ell}|_{t_{j\ell}}.$$
Since
$\overline{p_j |_{t_j}} <_n w_1$ and $p|_{w_1} =w \in
\nr$
is normal, by Definition \mref{defweakmonomial},
we have
$$ \lbar{ \tilde{p_j} |_{t_j} }  = \lbar{p|_{p_j|_{t_j}}} =   \lbar{p|_{ \lbar{ p_j|_{t_j}}}} <_n \lbar{p|_{w_1}} =
p|_{w_1} = w$$
and so
$$\lbar{ p_{j\ell}|_{t_{j\ell}}} \leq_n \lbar{ \tilde{p_j} |_{t_j} }  <_n  w. $$
Hence
$$q_1 |_{s_1} \equiv q_2 | _{s_2} \text{ mod } [S,w].$$
\smallskip

\noindent
{\bf Case 3.} The placements $(\lbar{s_1},q_1)$ and $(\lbar{s_2},q_2)$ are nested. Without loss of generality, we may suppose $q_2=q_1|_q$ for some $q\in
\nrs$. Then
$q_1|_{\lbar{s_1}}=q_2|_{\lbar{s_2}}=(q_1|_q)|_{\lbar{s_2}}$ and hence
$\overline{s_1}=q|_{\overline{s_2}}$. Since $\overline{s_1}=q|_{\overline{s_2}}
\in \nr$, it follows that $q|_{s_2}$ is normal  by
Definition \ref{def:normaldef} and $\overline{q|_{s_2}} =
q|_{\overline{s_2}}$. For the  inclusion composition
$(s_1,s_2)^q_{\overline{s_1}}$, since $S$ is a Gr\"{o}bner-Shirshov
basis, we have
$$(s_1,s_2)^q_{\overline{s_1}} = s_1-q|_{s_2} = \sum_j c_j p_j |_{t_j},$$
where $0\neq c_j\in\bfk$, $p_j\in \nrs$, $t_j\in
S$ and $p_j|_{t_j}$
is normal with $\overline{p_j|_{t_j}} <_n
\overline{s_1}$.  Thus
$$q_2|_{s_2} - q_1|_{s_1}
= q_1|_{q|_{s_2}} - q_1|_{s_1} = -q_1|_{s_1-q|_{s_2}} = -\sum_j c_j q_1|_{p_j|_{t_j}}
= - \sum_j c_j \tilde{p_j}|_{t_j},
$$
where $\tilde{p_j} := q_1|_{p_j} \in \calr_n^\star$. By Lemma \mref{lemma:normalexp}, for each $j$, we may write
$$\tilde{p_j}|_{t_j} = \sum_{\ell} c_{j\ell} p_{j\ell}|_{t_{j\ell}},$$
where $ 0\neq c_{j\ell} \in \bfk$, $p_{j\ell}|_{t_{j\ell}}$ is normal and $\lbar{p_{j\ell}|_{t_{j\ell}}} \leq_n \lbar{ \tilde{p_j}|_{t_j}}$.
So
$$q_2 |_{s_2} - q_1 |_{s_1}  = - \sum_{j, \ell} c_j c_{j\ell} p_{j\ell}|_{t_{j\ell}}.$$
Since
$\overline{p_j |_{t_j}} <_n \lbar{s_1}$ and $q_1|_{\lbar{s_1}} =w \in \calr_n$
is normal, by Definition \mref{defweakmonomial},
we have
$$ \lbar{ \tilde{p_j} |_{t_j} }  = \lbar{q_1|_{p_j|_{t_j}}} =   \lbar{q_1|_{ \lbar{ p_j|_{t_j}}}} <_n \lbar{ q_1 |_{\lbar{ s_1 } } } =
q_1 |_{\lbar{ s_1 } } = w$$
and so $\lbar{ p_{j\ell}|_{t_{j\ell}}} \leq_n \lbar{ \tilde{p_j} |_{t_j} }  <_n  w.$
Hence $q_2|_{s_2} - q_1|_{s_1} \equiv 0 \text{ mod } [S, w].$

This completes the proof of Lemma~\mref{lemma:basis}.
\end{proof}

\begin{lemma}
Let $S \subseteq \mathbf{k}\nr$ with $d(S) \subseteq S$ and $\mathrm{Irr(S)}:= \nr \setminus \{
q|_{\overline{s}} \mid q\in \nrs, s\in S ,
q|_{s}\text{ is normal } \}$. Then any $f\in \mathbf{k}\nr$ has an expression
$$f = \sum_i c_i u_i +  \sum_j d_j q_j|_{s_j}, $$
where for each $i, j$, we have $0\neq c_i,d_j \in \bfk, u_i\in \mathrm{Irr(S)},
\overline{u_i} \leq_n \lbar{f}$, $q_j\in \nrs$, $s_j\in S$ such that $q_j|_{s_j}$ is normal and
$q_j|_{\lbar{s_j}} \leq_n \lbar{f}$.
\mlabel{lemma:expression}
\end{lemma}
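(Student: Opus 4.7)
The plan is to prove the statement by Noetherian induction on the leading monomial~$\overline{f}$ with respect to the well-order~$<_n$ on~$\nr$. The base case is~$f=0$, for which the empty expression trivially works. So we may assume~$f\neq 0$ and that the desired conclusion already holds for every~$g\in \bfk\nr$ with~$\overline{g}<_n\overline{f}$.

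We distinguish two cases according to whether or not~$\overline{f}\in\mathrm{Irr}(S)$. If~$\overline{f}\in\mathrm{Irr}(S)$, write~$f=c\overline{f}+f_1$ where~$c\in\bfk$ is the leading coefficient and~$\overline{f_1}<_n\overline{f}$. Applying the induction hypothesis to~$f_1$ produces an expression of the required form for~$f_1$, and adding the single term~$c\overline{f}$ (with~$u_i=\overline{f}\in\mathrm{Irr}(S)$ and~$\overline{u_i}=\overline{f}$) yields the desired expression for~$f$; all leading-term bounds are preserved since~$\overline{f_1}<_n\overline{f}$.

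If~$\overline{f}\notin\mathrm{Irr}(S)$, then by the definition of~$\mathrm{Irr}(S)$ there exist~$q\in\nrs$ and~$s\in S$ such that~$q|_s$ is normal and~$\overline{f}=q|_{\overline{s}}$. After dividing~$s$ by its leading coefficient, we may assume without loss of generality that~$s$ is monic. By Lemma~\mref{normalequiv} we then have~$\overline{q|_s}=q|_{\overline{s}}=\overline{f}$. Letting~$c\in\bfk$ denote the leading coefficient of~$f$ and setting
$$f_1 := f - c\,q|_s,$$
the leading terms of~$f$ and of~$c\,q|_s$ cancel, so~$\overline{f_1}<_n\overline{f}$. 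By the induction hypothesis,~$f_1$ admits an expression of the required form with all leading-term bounds at most~$\overline{f_1}<_n\overline{f}$. Adding back the single term~$c\,q|_s$ contributes one summand with~$q_j=q$, $s_j=s$, $q_j|_{s_j}$ normal, and~$q_j|_{\overline{s_j}}=\overline{f}$, so the combined expression for~$f$ still satisfies all the required bounds.

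I expect the argument to be largely routine; the only subtlety is the reliance on Lemma~\mref{normalequiv} to guarantee that subtracting~$c\,q|_s$ genuinely reduces the leading term, and hence that the induction on~$<_n$ terminates. This is precisely where the weakly monomial property of~$<_n$ and the normality of~$q|_s$ interact, and it is the reason the hypothesis~$d(S)\subseteq S$ is not needed at this stage (it enters only when one wants to further expand~$q|_s$ via Lemma~\mref{lemma:normalexp}, which is not required here).
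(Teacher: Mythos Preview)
Your argument is correct and follows essentially the same route as the paper: both proceed by (Noetherian) induction on~$\overline{f}$, split into the two cases~$\overline{f}\in\mathrm{Irr}(S)$ and~$\overline{f}\notin\mathrm{Irr}(S)$, subtract the appropriate term, and recurse. The paper phrases this as a minimal-counterexample argument rather than a direct induction, but the content is identical; if anything, your explicit appeal to Lemma~\ref{normalequiv} to justify~$\overline{q|_s}=q|_{\overline{s}}$ is more careful than the paper, which simply asserts~$\overline{f_1}<_n\overline{f}$. Your observation that~$d(S)\subseteq S$ plays no role here is also correct. One small quibble: the step ``dividing~$s$ by its leading coefficient'' tacitly replaces~$s$ by an element that need not lie in~$S$; the clean fix is to keep~$s\in S$ and subtract~$(c/c_s)\,q|_s$ instead, where~$c_s$ is the leading coefficient of~$s$ (and in any case the lemma is only applied in the paper when~$S$ is monic).
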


\begin{proof}
Suppose the lemma does not hold and let $f$ be a counterexample with $\lbar{f}$ minimal. Write
$f = \sum_i c_i u_i$ where $0\neq c_i\in \bfk$, $u_i\in
\nr$ and $u_1>_n u_2 >_n \cdots$. If $u_1\in
\mathrm{Irr(S)}$, then let $f_1 := f-c_1u_1$. If $u_1\notin
\mathrm{Irr}(S)$, that is, there exists $s_1\in S$ such that $u_1 = q_1|_{\lbar{s_1}}$ and $q_1|_{s_1}$ is normal, then let $f_1 := f -
c_1q_1|_{s_1}$. In both cases $\lbar{f_1} <_n \lbar{f}$. By the
minimality of $f$, we have that $f_1$ has the desired
expression. Then $f$ also has the desired expression. This is a
contradiction.
\end{proof}

Now we are ready to state and prove the Composition-Diamond Lemma.

\begin{theorem}(Composition-Diamond Lemma) Let $S$ be a set of monic DRB polynomials in $\bfk \nr$ with $d(S) \subseteq S$ and $\mathrm{Id}(S)$ the differential Rota-Baxter ideal of
$\mathbf{k}\nr$ generated by $S$. Then the following
conditions are equivalent:
\begin{enumerate}
\item $S$ is a Gr\"{o}bner-Shirshov basis in
$\mathbf{k}\nr$.
\mlabel{it:cda}
\item If $0\neq f\in \mathrm{Id}(S)$, then $\overline{f} =
q|_{\overline{s}}$, where $q\in \nrs$, $s\in S$
and $q|_s$ is normal.
\mlabel{it:cdb}
\item
The set 
$\mathrm{Irr(S)} = \nr \setminus
\{ q|_{\overline{s}} \mid q\in \nrs, s\in S ,
q|_{s}\text{ is normal} \}$ is a $\mathbf{k}$-basis of
$\mathbf{k}\nr/\mathrm{Id}(S)$.
In other words, $\mathbf{k} \mathrm{Irr}(S) \oplus \mathrm{Id}(S) = \mathbf{k}
\nr$.
\mlabel{it:cdc}
\end{enumerate} \mlabel{thm:CD lemma}
\end{theorem}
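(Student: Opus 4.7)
My plan is to prove the equivalences via the cyclic implications $(\mref{it:cda}) \Rightarrow (\mref{it:cdb}) \Rightarrow (\mref{it:cdc}) \Rightarrow (\mref{it:cda})$, following the standard Composition--Diamond template but adapted to the weakly monomial order $<_n$ by systematically requiring every $q|_s$ in our representations to be \emph{normal}. The heavy lifting has been prepared in Section~\mref{sec:cd}: Lemma~\mref{lemma:normalexp} lets us pass from arbitrary $q|_s$ to a normal expansion, Lemma~\mref{lemma:basis} handles coincidences of leading terms, and Lemma~\mref{lemma:expression} produces a reduced representative of any element of $\bfk\nr$.

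For $(\mref{it:cda}) \Rightarrow (\mref{it:cdb})$, given $0 \neq f \in \mathrm{Id}(S)$, I first write $f = \sum_i c_i q_i|_{s_i}$ using Lemma~\mref{lemma:operator ideal}, then apply Lemma~\mref{lemma:normalexp} (whose hypotheses $d(S) \subseteq S$ and triviality of left/right multiplication compositions both follow from (\mref{it:cda})) to assume each $q_i|_{s_i}$ normal. Among such representations I pick one minimizing $w := \max_i \overline{q_i|_{s_i}}$ with respect to $<_n$, and among these, one with the fewest indices attaining this maximum. If $\overline{f} = w$, then (\mref{it:cdb}) holds because $w = q_i|_{\overline{s_i}}$ by Lemma~\mref{normalequiv}. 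Otherwise the contributions of $w$ must cancel in $f$, forcing two distinct indices $i \neq j$ with $\overline{q_i|_{s_i}} = \overline{q_j|_{s_j}} = w$; Lemma~\mref{lemma:basis} then yields $q_i|_{s_i} \equiv q_j|_{s_j}$ mod $[S,w]$, and the identity
\[
c_i q_i|_{s_i} + c_j q_j|_{s_j} = c_i(q_i|_{s_i} - q_j|_{s_j}) + (c_i+c_j)\,q_j|_{s_j}
\]
replaces two top-level terms by one (if $c_i+c_j \neq 0$) or by none (if $c_i+c_j=0$), while the new normal pieces extracted from $q_i|_{s_i} - q_j|_{s_j}$ all sit strictly below $w$, contradicting the minimal choice. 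This minimal-counterexample bookkeeping, together with tracking normality throughout, is the main technical obstacle.

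The remaining two implications are shorter. For $(\mref{it:cdb}) \Rightarrow (\mref{it:cdc})$, spanning of $\mathrm{Irr}(S)$ modulo $\mathrm{Id}(S)$ is immediate from Lemma~\mref{lemma:expression}, and linear independence follows because a nonzero combination $f = \sum c_i u_i \in \mathrm{Id}(S)$ with $u_i \in \mathrm{Irr}(S)$ would by (\mref{it:cdb}) have $\overline{f} = q|_{\overline{s}}$ for some normal $q|_s$, while simultaneously $\overline{f}$ equals some $u_j \in \mathrm{Irr}(S)$, contradicting the definition of $\mathrm{Irr}(S)$. For $(\mref{it:cdc}) \Rightarrow (\mref{it:cda})$, any composition $h$ lies in $\mathrm{Id}(S)$, and Lemma~\mref{lemma:expression} writes $h = \sum c_i u_i + \sum d_j q_j|_{s_j}$ with each $q_j|_{s_j}$ normal and $\overline{q_j|_{s_j}} \leq_n \overline{h}$; since $\sum c_i u_i \in \mathrm{Id}(S)$ and $\mathrm{Irr}(S)$ is linearly independent modulo $\mathrm{Id}(S)$ by (\mref{it:cdc}), all $c_i = 0$. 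This yields triviality modulo $[S]$ for left/right multiplication compositions directly; for intersection and inclusion compositions, the leading term of $h$ is strictly below the ambiguity $w$ by design (the leading terms of $\overline{f}u$ and $v\overline{g}$, or of $\overline{f}$ and $q|_{\overline{g}}$, cancel), which upgrades $\overline{q_j|_{s_j}} \leq_n \overline{h}$ to $<_n w$ and delivers triviality modulo $[S,w]$.
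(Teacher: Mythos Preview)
Your proof is correct and follows essentially the same cyclic scheme $(\mref{it:cda}) \Rightarrow (\mref{it:cdb}) \Rightarrow (\mref{it:cdc}) \Rightarrow (\mref{it:cda})$ as the paper, invoking Lemmas~\mref{lemma:normalexp}, \mref{lemma:basis}, and~\mref{lemma:expression} at the same junctures. The only cosmetic difference is that in $(\mref{it:cdc}) \Rightarrow (\mref{it:cda})$ you handle all composition types uniformly through Lemma~\mref{lemma:expression}, whereas the paper treats intersection and inclusion compositions by a direct monomial argument; your route is if anything slightly tighter.
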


\begin{proof} \mref{it:cda} $\Rightarrow$ \mref{it:cdb}: Let $0\neq f\in \mathrm{Id}(S)$.
Then by Lemmas \mref{lemma:operator ideal} and \mref{lemma:normalexp} we have
\begin{equation}\label{ideal}
f = \sum_{i=1}^{k} c_iq_i|_{s_i}, \text{ where } 0\neq c_i \in
\mathbf{k}, q_i\in \nrs, s_i\in S, q_i|_{s_i} \text{ is normal}, 1\leq i\leq k.
\end{equation}
Let $w_i = q_i |_{\lbar{s_i}}, 1\leq i\leq k$. Rearrange the elements in non-increasing order:
$$w_1=w_2=\cdots=w_m >_n w_{m+1} \geq_n \cdots \geq_n w_k.$$
If for each $0\neq f\in \mathrm{Id}(S)$, there is a choice of the
above sum such that $m=1$, then $\overline{f} = q_1|_{\lbar{s_1}}$ and we
are done. Thus assume that the implication (a) $\Rightarrow({\rm b})$ does
not hold. Then there is an $0\neq f\in \mathrm{Id}(S)$ such that for
any expression in Eq.~(\ref{ideal}), we have $m\geq 2$. Fix
such an $f$ and choose an expression in Eq.~(\ref{ideal}) such that
$q_1|_{\overline{s_1}}$ is minimal and such that $m\geq 2$ is minimal. In other words, it has the fewest $q_i|_{s_i}$ such that
$q_i|_{\overline{s_i}}=q_1|_{\overline{s_1}}$. Since $m\geq 2$, we
have $q_1|_{\overline{s_1}}=w_1=w_2=q_2|_{\overline{s_2}}$.

Since $S$ is a Gr\"{o}bner-Shirshov basis in $\mathbf{k}
\nr$, by Lemma \mref{lemma:basis}, we have
$
q_2|_{s_2} - q_1|_{s_1} =\sum_j d_j p_j|_{r_j},
$
with $0\neq d_j\in \mathbf{k}$, $r_j\in S$, $p_j\in \nrs$ and $p_j|_{r_j}$ normal such that $p_j|_{\overline{r_j}} <_n w_1$.
Therefore,
$$
f = \sum_{i=1}^{k} c_iq_i|_{s_i} = (c_1+c_2)q_1|_{s_1} + c_3 q_3|_{s_3} + \cdots + c_m q_m|_{s_m} +
\sum_{i=m+1}^{k} c_iq_i|_{s_i} + \sum_j c_2 d_j p_j|_{r_j}.
$$
By the minimality of $m$, we must have $c_1+c_2 = c_3 =\cdots = c_m
= 0$. Then we obtain an expression of $f$ in the form of
Eq.~(\ref{ideal}) for which $q_1|_{\overline{s_1}}$ is even smaller. This gives the desired contradiction.
\smallskip

\noindent
\mref{it:cdb} $\Rightarrow \mref{it:cdc}$: Clearly $0\in \mathbf{k} \mathrm{Irr}(S)
+\mathrm{Id}(S) \subseteq \mathbf{k} \nr$. Suppose the
inclusion is proper. Then $\mathbf{k} \nr \setminus
(\mathbf{k} \mathrm{Irr}(S) + \mathrm{Id}(S))$ can contain only nonzero
elements. Choose $f\in \mathbf{k} \nr \setminus
(\mathbf{k} \mathrm{Irr}(S) + \mathrm{Id}(S)) $ such that
$$
\overline{f} = \mathrm{min} \{\overline{g} \mid g\in \mathbf{k}
\nr \setminus (\mathbf{k} \mathrm{Irr}(S) +
\mathrm{Id}(S)) \}.
$$
We consider two cases. 
\smallskip

\noindent
{\bf Case 1.} Suppose $\overline{f} \in \mathrm{Irr(S)}$. Then $f\neq
\overline{f}$ since $f\notin \mathrm{Irr(S)}$. By
$\overline{f-\overline{f}} <_n \overline{f}$ and the minimality of
$\overline{f}$, we must have
$$ f-\overline{f} \in \mathbf{k} \mathrm{Irr}(S) + \mathrm{Id}(S).$$
Therefore,
$f\in \mathbf{k} \mathrm{Irr}(S) + \mathrm{Id}(S).$ This is a contradiction.
\smallskip

\noindent
{\bf Case 2.} Suppose $\overline{f} \notin \mathrm{Irr(S)}$. Then the
definition of $\mathrm{Irr(S)}$ gives $\overline{f} =
q|_{\overline{s}}$, where $q\in \ncrbw^\star(\Delta X)$, $s\in S$
and $q|_s$ is normal. Then $\lbar{q|_s} = q|_{\lbar{s}} = \lbar{f}$
yielding $\overline{f-q|_s} <_n \overline{f}$. If $f=q|_s$, then $f\in
\mathrm{Id}(S)$, a contradiction. On the other hand, if $f\neq q|_s$, then $f-q|_s \neq
0$ with $\overline{f-q|_s} <_n \overline{f}$. By the minimality of
$\overline{f}$, we have
$$ f-q|_s \in \mathbf{k} \mathrm{Irr}(S) + \mathrm{Id}(S).$$
Thus
$$f\in \mathbf{k} \mathrm{Irr}(S)+ \mathrm{Id}(S),$$
still a contradiction.

Therefore $\mathbf{k} \mathrm{Irr}(S) + \mathrm{Id}(S) = \mathbf{k}
\nr$. Suppose $\mathbf{k} \mathrm{Irr}(S) \cap
\mathrm{Id}(S) \neq 0$. Let $0\neq f\in \mathbf{k}
\mathrm{Irr(S)} \cap \mathrm{Id}(S)$. Then by $f\in  \mathrm{\mathrm{Irr(S)}}$, we may write
$$
f = c_1v_1 +c_2v_2 + \cdots + c_kv_k,
$$
where $v_1 >_n v_2 >_n \cdots >_n v_k \in \mathrm{\mathrm{Irr(S)}}$. Since
$f\in \mathrm{Id}(S)$, by Item (b), we have $v_1 = \overline{f} =
q|_{\overline{s}}$ for some $q\in \nrs$, $s\in S$
and $q|_s$ is normal. This is a contradiction to the definition of
$\mathrm{\mathrm{Irr(S)}}$. Therefore $\mathbf{k} \mathrm{Irr}(S)
\oplus \mathrm{Id}(S) = \mathbf{k} \nr$ and
$\mathrm{Irr(S)}$ is a $\mathbf{k}$-basis of $\mathbf{k}\ncrbw(\Delta
X)/\mathrm{Id}(S)$.

\mref{it:cdc} $\Rightarrow \mref{it:cda}:$ Suppose $f,g\in S$ give an intersection or
inclusion composition. With the notations in the definitions of compositions, let $F=fu$ and $G=vg$ in the case of intersection composition and let $F=f$ and $G=q|_g$ in the case of
inclusion composition. Then $w:= \overline{F} =\overline{G}
$. If $(f,g)_w = F-G=0$, then we are done. If $(f,g)_w
\neq 0$, then we have
$$(f,g)_w = \sum_{i=1}^k c_i u_i,\quad 0\neq c_i\in \mathbf{k}, u_1>_n u_2>_n \cdots>_n u_k \in \nr.$$
Thus $u_i <_n \overline{F} =\lbar{G}= w$. As $(f,g)_w \in
\mathrm{Id}(S)$ and $\mathbf{k} \mathrm{Irr}(S) \cap \mathrm{Id}(S) = 0$ by Item \mref{it:cdc},
we find that $u_i$ is not in $\mathrm{Irr(S)}$ for $i=1,\cdots,k$. So by the definition of $\mathrm{Irr(S)}$, there are
$q_i\in \nrs$, $s_i\in S$ such that $u_i=
q_i|_{\lbar{s_i}}$ and $q_i|_{s_i}$ is normal for each $1\leq i\leq k$. From $\lbar{q_i|_{s_i}} = q_i|_{\lbar{s_i}} = u_i <_n w$, we have $(f,g)_w \equiv 0$ mod $[S,w]$.

Consider a composition of right multiplication $fu$ where $f\in S$, $\lbar{f} \in \nr P( \nr )$
and $u \in P( \nr )$. Then we have $fu \in \Id(S)$. By Item \mref{it:cdc}, we have
$\mathbf{k} \mathrm{Irr}(S) \cap \mathrm{Id}(S) = 0.$
By Lemma \mref{lemma:expression} this implies $fu = \sum_j d_j
q_j|_{s_j}$, where $0\neq d_j\in \bfk$, $s_j\in S$ such that $q_j\in
\nrs$, $q_j|_{s_j}$ is normal and $q_j|_{\overline{s_j}} \leq_n \overline{fu}$. Thus $fu \equiv 0$ mod $[S]$.
With a similar argument, we can show that the compositions of left multiplication are trivial $[S]$.

In summary, we have proved that $S$ is a Gr\"obner-Shirshov basis.
\end{proof}

\section{Gr\"{o}bner-Shirshov bases and free integro-differential algebras}
\mlabel{sec:gs}

We first consider a finite set $X$ and $n\geq 1$ in Section~\mref{ss:gsb} and prove that the idea $I_{\mathrm{ID},n}$ of $\bfk\nr$ possesses a Gr\"obner-Shirshov basis. Then in Section~\mref{ss:bases}, we apply the
Composition-Diamond Lemma (Theorem~\mref{thm:CD lemma}) to construct a
canonical basis for $\bfk\nr/I_{\mathrm{ID,n}}$. Letting $n$ go to infinity, we obtain a canonical basis of the free integro-differential algebra
$\bfk\calr(\dx)/I_{\mathrm{ID}}$ on the finite set $X$. For any
well-ordered set $X$, we show that the canonical basis of the free
integro-differential algebra on each finite subset of $X$ is compatible with the inclusions of the subsets of $X$ and thus obtain a canonical basis of the free integro-differential algebra on $X$.

\subsection{Gr\"obner-Shirshov basis}\mlabel{ss:gsb} In this subsection, $X$ is a finite set. Let $$ S_n:= \left\{ \phi_1(u,v), \phi_2(u,v) \mid u,v \in \nr \right\}$$
be the set of generators in Eq.~(\mref{eq:ibpn}) corresponding to the integration by parts axiom
Eq.~(\mref{eq:ibpl}). Then
$I_{\mathrm{ID,n}}$ is the differential Rota-Baxter ideal $\Id(S_n)$ of
$\bfk\nr$ generated by $S_n$.

\begin{remark} Let $u=1$. Then $\phi_1(u,v)= \phi_1(1,v)=0$ is in $S_n$. By Eqs.~(\ref{eq:diffl}) and (\ref{eq:fft}), we have
\begin{equation} \mlabel{eq:zero}
d(\phi_1(u,v)) = d(u)P(v) - d(uP(v)) +uv + \lambda d(u)v =0,  \end{equation}
and hence is in $S_n$.
Similarly, $d(\phi_2(u,v)) =0$.  So $d(S_n) \subseteq S_n$.
\end{remark}
Next, we show that $S_n$ is a Gr\"{o}bner-Shirshov basis of the differential Rota-Baxter ideal $I_{\ID,n} = \Id(S_n)\subseteq \bfk\nr$.

\begin{lemma}
  Let $u = u_0 u_1\cdots u_k \in M(\Delta X)$ with $u_0,\cdots, u_k \in \Delta
  X$. Then $\lbar{d(u)} = u_0 u_1\cdots u_{k-1}d(u_k)$.  If $u\in M(\Delta_n
  X)$, then $\lbar{d(u)} = u_0 u_1\cdots u_{k-1}d(u_k)$ provided $u_k\in
  \Delta_{n-1} X$.
\mlabel{lemma:diffleadterm}
\end{lemma}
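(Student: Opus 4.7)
The plan is to expand $d(u)$ via the explicit product formula (\ref{eq:prodexp}) and rank the resulting summands under $<_n$. Applied to $u = u_0 u_1 \cdots u_k$, this formula gives
\[
d(u) = \sum_{\emptyset \neq I \subseteq \{0, 1, \ldots, k\}} \lambda^{|I|-1}\, d_I(u_0)\, d_I(u_1) \cdots d_I(u_k),
\]
where $d_I(u_i) = d(u_i)$ for $i \in I$ and $d_I(u_i) = u_i$ otherwise. Each summand is a word of length $k+1$ whose letters lie in $\Delta X$ (resp.\ $\Delta_n X$) and which contains no occurrence of $P$; hence every summand has the same $\dg$-value $(k+1,\,0)$. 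Comparing summands under $<_n$ therefore reduces, via the definition of $<_n$, to the lexicographic comparison of their standard forms, with letters compared by Eq.~(\ref{eq:difford}).

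The decisive observation is that for any letter $x^{(i)} \in \Delta X$ we have $d(x^{(i)}) = x^{(i+1)} <_n x^{(i)}$, since $(x, -i-1) < (x, -i)$ lexicographically. Fix any non-empty $I \neq \{k\}$ and let $j := \min I$; then $j < k$, and the corresponding summand agrees with $u_0 \cdots u_{k-1}\, d(u_k)$ in positions $0, \ldots, j-1$ while its $j$-th letter is $d(u_j) <_n u_j$. Hence that summand is strictly smaller than $u_0 \cdots u_{k-1}\, d(u_k)$ in the lex order. The summand for $I = \{k\}$ is precisely $u_0 \cdots u_{k-1}\, d(u_k)$ with coefficient $\lambda^0 = 1$, so no cancellation occurs, yielding $\overline{d(u)} = u_0 \cdots u_{k-1}\, d(u_k)$ in $\bfk\langle \Delta X\rangle$.

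For the second assertion, working in $\bfk\calr_n$, the relation $d(x^{(n)}) = 0$ kills every summand indexed by an $I$ that contains some $i$ with $u_i = x^{(n)}$. The hypothesis $u_k \in \Delta_{n-1} X$ is exactly what is needed to ensure that the $I = \{k\}$ summand $u_0 \cdots u_{k-1}\, d(u_k)$ is non-zero; the lex argument of the previous paragraph still identifies it as the unique maximum among the non-vanishing summands, giving $\overline{d(u)} = u_0 \cdots u_{k-1}\, d(u_k)$ as claimed. There is no serious obstacle: the only point that requires care is to check that $<_n$ restricted to these pure-letter words really coincides with the tuple-lex order, which follows by directly unwinding the inductive definition of $<_n$, since such a word is already in the standard form of Eq.~(\ref{eq:sdec}) with each $u_i$ indecomposable.
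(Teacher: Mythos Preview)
Your proof is correct and follows essentially the same approach as the paper, which simply states that the result follows from Eq.~(\ref{eq:prodexp}) and the definition of the order on $\Delta X$. Your argument is a careful elaboration of precisely that: expand $d(u)$ via the explicit subset formula, observe that all summands share the same $\dg$-value so that comparison is lexicographic on letters, and use $d(x^{(i)}) = x^{(i+1)} <_n x^{(i)}$ to see that the $I = \{k\}$ term dominates.
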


\begin{proof}
  This follows from Eq.~(\mref{eq:prodexp}) and the definitions of the order on
  $\Delta X$.
\end{proof}

Let $\cala_{d} := \{ \lbar{ d(u)} \mid u\in S(\Delta X) \}$, $\cala_{n, d} :=  \cala_{d}  \cap M(\dnx)$  and
\begin{equation}
\nz := \{ x_0^{(n)} \cdots x_k^{(n)} \mid x_0,\cdots, x_k\in X ,k\geq 0 \}.
\mlabel{eq:nz}
\end{equation}
Note that $d(u) = 0$ for $u \in M(\dnx)$ if and only if $u = 1$ or $u\in \nz$.

\begin{lemma} We have
\begin{align*}
\left\{ \lbar{ \phi_1(u,v)  } \mid u,v\in \nr \right\} = & P(\nr \fun P(\nr)) \bigcup  \left( \bigcup_{r\geq 1} P(\nr \fun (P(\nr)\nz)^r P(\nr)) \right)\\
& \bigcup \Big(P(\big(\Lambda(\nz,\nr)\setminus P(\nr) \big)\nr) \bigcap \nr\Big) \bigcup  \{ 0\}.
\end{align*}
Here we take the intersection with $\nr$ to ensure that the right hand side is in $\nr$.
\mlabel{lemma:leadterma}
\end{lemma}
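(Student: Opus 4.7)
The plan is to determine $\lbar{\phi_1(u,v)}$ for each $(u,v) \in \nr \times \nr$ by case analysis on the coarse decomposition $u = \Sigma_0 P_0 \Sigma_1 P_1 \cdots P_{t-1} \Sigma_t$ into alternating maximal $S(\dnx)$-blocks $\Sigma_i$ and $P$-blocks $P_i \in P(\nr)$. The key preliminary observation, obtained from Leibniz Eq.~(\mref{eq:prodexp}) together with the section axiom Eq.~(\mref{eq:fft}), is that differentiating a $\dnx$-letter preserves the first degree coordinate $|\cdot| := \dg_{P\cup\dnx}$, while differentiating a $P$-block drops it by $1$. Hence $|\lbar{d(u)}| = |u|$ iff some $\Sigma_i$ contains a letter in $\Delta_{n-1}X$; $|\lbar{d(u)}| = |u|-1$ iff $u$ has a $P$-block but no such letter; and $d(u)=0$ iff $u \in \{1\} \cup \nz$. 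Comparing first coordinates of the four summands of $\phi_1(u,v)$ then singles out the dominant term in each branch.

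The $\{0\}$-component arises from $u=1$ (where $\phi_1(u,v)$ cancels termwise) and from $u = P(\tilde u)$ a single $P$-block (where the Rota-Baxter axiom Eq.~(\mref{eq:rb}) expands $-uP(v) = -P(\tilde u)P(v)$ into three summands that cancel $P(d(u)P(v))$, $P(uv)$ and $\lambda P(d(u)v)$ exactly). In the generic branch, where some $\Sigma_i$ meets $\Delta_{n-1}X$, we have $|P(d(u)P(v))| = |u|+|v|+2$ strictly exceeding the first coordinates of the other three summands, so $\lbar{\phi_1(u,v)} = P(\lbar{d(u)}\,P(v))$. Leibniz on the standard form of $u$ shows the $<_n$-leading term of $d(u)$ keeps every $P$-block intact and differentiates the right-most $\Delta_{n-1}X$-letter $u_{j^*}$, located in $\Sigma_{s^*} = \sigma'_{s^*}\,u_{j^*}\,\sigma''_{s^*}$ with $\sigma''_{s^*} \in M(\nz) \cup \{1\}$. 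The $\nz$-suffix $\sigma''_{s^*}$ can be absorbed into the $\fun$-factor because any word in $M(\dnx)$ ending in $x^{(i)}$ with $1 \leq i \leq n$ lies in $\fun$. Setting $A = \Sigma_0 P_0 \cdots P_{s^*-1} \in \nr$, $W = \sigma'_{s^*}\,d(u_{j^*})\,\sigma''_{s^*} \in \fun$, and $\tau = P_{s^*}\Sigma_{s^*+1}\cdots P_{t-1}\Sigma_t$, the expression $\lbar{d(u)}P(v) = AW\tau P(v)$ sits in the pattern $AW(P(\nr)\nz)^r P(\nr)$ for some $r\geq 0$: when $\Sigma_t\neq 1$ this is immediate, and when $\Sigma_t=1$ a terminal Rota-Baxter reduction collapses the adjacent $P_{t-1}\cdot P(v)$ into $P(P(\tilde P_{t-1})v)$ plus strictly lower-degree terms, again of the required form. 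This yields the first set for $r=0$ and the second for $r\geq 1$. The reverse inclusion is by the opposite recipe: reconstruct $u$ from $A$, a preimage of $W \in \fun$, and the tail $\tau$, and read $v$ off the outermost $P(\nr)$-factor.

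For the degenerate branch, where no $\Sigma_i$ meets $\Delta_{n-1}X$: if $u \in \nz$, then $\phi_1(u,v) = P(uv) - uP(v)$, and a lex comparison of standard forms---the outer $P$ of $P(uv)$ is a single indecomposable whose degree strictly exceeds that of the leading indecomposable $x^{(n)}$ of $uP(v)$---yields $\lbar{\phi_1(u,v)} = P(uv)$, placing the result in the third set because $u \in \nz \subseteq \Lambda(\nz,\nr) \setminus P(\nr)$. If $u$ has a $P$-block but only $\nz$-letters, then $P(d(u)P(v))$, $uP(v)$, $P(uv)$ tie in $(m_1,m_2)$ while $\lambda P(d(u)v)$ is strictly smaller; a finer standard-form comparison again singles out $P(uv)$ as the leading term, and $u$ itself (an alternation of $\nz$-blocks and $P$-blocks, but not a single $P$-block) supplies the $\Lambda(\nz,\nr)\setminus P(\nr)$-prefix required by the third set. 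The reverse inclusion is routine by factoring any $w \in \Lambda(\nz,\nr)\setminus P(\nr)$ so that its leading $\nz$-block plays the role of $u$. The main obstacle is this tie-breaking standard-form comparison together with the terminal Rota-Baxter collapse in the generic branch; both are settled by the recursive definition of $<_n$ and the fact that the Rota-Baxter reduction preserves $(m_1,m_2)$.
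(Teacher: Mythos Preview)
Your approach is essentially the paper's: the same trichotomy into the $\{0\}$-component ($u=1$ or $u\in P(\nr)$), the generic branch (some letter of $u$ lies in $\Delta_{n-1}X$, which is exactly the paper's Case~1 condition $\deg_P(\lbar{d(u)})=\deg_P(u)$), and the degenerate branch (the paper's Case~2). Your use of the first coordinate $|\cdot|=\dg_{P\cup\dnx}$ in place of the paper's $\dg_P$ is equivalent since $\dg_{\dnx}$ is constant across the four summands.

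Two points need tightening. First, in the generic branch with $\Sigma_t=1$, the Rota-Baxter reduction $P(\tilde P_{t-1})\diamondsuit P(v)$ does \emph{not} produce $P(P(\tilde P_{t-1})v)$ plus ``strictly lower-degree terms'': the companion term $P(\tilde P_{t-1}\diamondsuit P(v))$ has the same $(m_1,m_2)$-degree, so the tie must be broken by the lexicographic comparison of standard forms (the first indecomposable $P(\tilde P_{t-1})$ of $P(\tilde P_{t-1})v$ dominates the first indecomposable of $\tilde P_{t-1}P(v)$). Your conclusion is unaffected, since either candidate leaves $\lbar{d(u)}\diamondsuit P(v)$ in the pattern $\nr\,\fun\,(P(\nr)\nz)^{r'}P(\nr)$, but the stated reason is wrong.

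Second, your reverse inclusion for the third set is incomplete. An element of $\Lambda(\nz,\nr)\setminus P(\nr)$ need not begin with a $\nz$-block: it may begin with a $P$-block, e.g.\ $w=P(a)z$ with $z\in\nz$, in which case there is no ``leading $\nz$-block'' to serve as $u$. The paper's fix is simply to take $u$ equal to the entire $\Lambda(\nz,\nr)\setminus P(\nr)$ factor; then Case~2 gives $\lbar{\phi_1(u,v)}=\lbar{P(uv)}$, which equals the target because the $\cap\,\nr$ hypothesis forces $uv$ to already be reduced. Your tie-breaking assertion in the degenerate branch (``a finer standard-form comparison again singles out $P(uv)$'') also needs the one-line justification the paper gives: $\lbar{d(u)}<_n u$ together with the standard-form comparison $\lbar{uP(v)}<_n\lbar{P(uv)}$.
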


\begin{proof}
  We first show that the left hand side of the equation is contained in the
  right hand side. If $u=1$, then $\phi_1(u,v) = 0 = \lbar{\phi_1(u,v)}$.  If
  $u\in P(\nr)$, let $u=P(u_0)$ for some $u_0\in \nr$, then
$$\phi_1(u,v) = P(u_0P(v)) - P(u_0)P(v) +P(P(u_0)v) + \lambda P(u_0 v) = 0$$
and so $\lbar{\phi_1(u,v)} = 0$. Suppose that $u\neq 1$ and $u\notin P(\nr )$. Note that
$$\deg_{\dnx}( \lbar{P(d(u)P(v)) } ) = \deg_{\dnx}( \lbar{uP(v) } ) = \deg_{\dnx}( \lbar{ P(uv) } ) = \deg_{\dnx}( \lbar{P(d(u)v)} ).$$

\noindent{\bf Case 1.} $\deg_P(\lbar{d(u)}) = \deg_P(u)$. Then $$\deg_P(\lbar{ P(d(u)P(v))}) > \deg_P( \lbar{uP(v)}),  \deg_P( \lbar{P(uv)}),  \deg_P( \lbar{P(d(u)v)})$$
and so $\lbar{\phi_1(u,v)} = \lbar{ P(d(u)P(v))} = P(\lbar{d(u)P(v)}) $.
According to Eq.~(\mref{Eq:fourparts}), we have four subcases to consider. Consider first that $u = u_0P(\tu_0) \cdots u_k P(\tu_k) u_{k+1}$ with $u_0, \cdots, u_{k+1} \in S(\dnx)$ and $\tu_0, \cdots, \tu_{k+1} \in \nr$.
Since $\deg_P(\lbar{d(u)}) = \deg_P(u)$, there is at least one $u_i$ with $0\leq i\leq k+1$ such that $u_i\notin \nz$. If $u_{k+1} \notin \nz$, then $d(u_{k+1}) \neq 0$ and
$$\lbar{\phi_1(u,v)}= P(\lbar{d(u)P(v)}) =  P( u_0P(\tu_0) \cdots u_k P(\tu_k) \lbar{d( u_{k+1})} P(v) ) \in  P(\nr \fun P(\nr)).$$
If $u_{k+1}\in \nz$, suppose that $u_i$ with $0\leq i\leq k$ is right most such that $u_i\notin \nz$, then
$$\lbar{d(u)} = u_0P(\tu_0) \cdots u_{i-1}P(\tu_{i-1}) \lbar{d(u_i)}P(\tu_i) u_{i+1}P(\tu_{i+1}) \cdots u_kP(\tu_k) u_{k+1}$$
and so
$$\lbar{\phi_1(u,v)}= P(\lbar{d(u)} P(v)) \in  \cup_{r\geq 1} P(\nr \fun (P(\nr)\nz)^r P(\nr)).$$
For the other subcases, with a similar argument, we can obtain that $$\lbar{\phi_1(u,v)} \in P(\nr \fun P(\nr)) \cup  \big( \cup_{r\geq 1} P(\nr \fun (P(\nr)\nz)^r P(\nr)) \big).$$

\noindent{\bf Case 2.} $\deg_P(\lbar{d(u)}) \neq \deg_P(u)$. Then $u\in \Lambda(\nz, \nr)\setminus P(\nr)$ and $\deg_P(\lbar{ d(u) } ) = \deg_P(u) - 1$. So
$$\deg_P(\lbar{ P(d(u)P(v))}) = \deg_P( \lbar{uP(v)})= \deg_P( \lbar{P(uv)})= \deg_P( \lbar{P(d(u)v)})+1.$$
If $u\notin \nr P(\nr)$, then $\lbar{uP(v)} = uP(v)$ and $\lbar{P(uv)} = P(uv)$. By the definition of $<_n$, we have $uP(v) <_n P(uv)$.
If $u\in \nr P(\nr)$, let $u = u_0 P(u_1)$ with $u_0, u_1\in \nr$. Then by the definition of $<_n$, we have
$$ \lbar{uP(v)} = \lbar{ u_0 P(u_1)P(v)} = u_0 P( \lbar{ P(u_1) v }) <_n P( \lbar{ u_0P(u_1)v }) = \lbar{P(uv)}$$
Since $\lbar{d(u)} <_n u $, we have $\lbar{P(d(u) P(v))}, \lbar{ P(( d(u)v) }<_n
\lbar{P(uv)}$.  Hence $\lbar{\phi_1(u,v)} = \lbar{P(uv)} = P(\lbar{uv})\in
P(\Lambda(\nz,\nr) \nr)$.

We next prove the reverse inclusion. If $w = P(u_0 \lbar{ d(u_1)}
P(v)) \in P(\nr \cala_{n,d} P(\nr))$ with $u_0, v\in\nr$ and
$\lbar{d(u_1)} \in \cala_{n,d}$, let $u = u_0 u_1$. Then $\lbar{d(u)}
= u_0 \lbar{d(u_1)}$ and
$$\lbar{\phi_1(u,v)} = \lbar{P(d(u) P(v))} = P(\lbar{d(u)}P(v) ) = P( u_0 \lbar{d(u_1)} P(v)) = w. $$
If $$w = P(u_0 \lbar{ d(u_1)} u_2 P(v)) \in  \cup_{r\geq 1} P(\nr \fun (P(\nr)\nz)^r P(\nr)) $$
with $u_0, v\in \nr$, $\lbar{d(u_1)} \in \fun$ and $u_2 \in \cup_{r\geq 1}(P(\nr)\nz)^r$, let $u = u_0 u_1 u_2 $. Then $\lbar{d(u)} = u_0 \lbar{d(u_1)} u_2$ and
$$ \lbar{\phi_1(u,v)} = \lbar{P(d(u) P(v))} = P(\lbar{d(u)}P(v) ) = P( u_0 \lbar{d(u_1)} u_2 P(v)) = w. $$
If $w = P(uv) \in P(\Lambda(\nz,\nr) \nr)$ with $u\in \Lambda(\nz,\nr)$ and $v\in\nr$, then $ \lbar{\phi_1(u,v)} = P(uv) = w. $
\end{proof}

\begin{lemma} We have
\begin{align*}
&\left\{ \lbar{ \phi_2(u,v)  } \mid u,v\in \nr \right\} =  \nr \bigcap \left( P( P(\nr) \nr \fun ) \bigcup  \Big( \bigcup_{r\geq 1} P( P(\nr) \nr \fun  (P(\nr)\nz)^r ) \Big)\right. \\
&\bigcup  \Big( \left .\bigcup_{r\geq 1} P( P(\nr) \nr \fun  (P(\nr)\nz)^r P(\nr)) \Big) \bigcup P\Big( \nr (\Lambda(\nz,\nr) \setminus P(\nr) ) \Big) \right) \bigcup \{0\}.
\end{align*}
Here we take the intersection with $\nr$ to ensure that the right hand side is in $\nr$.
\mlabel{lemma:leadtermb}
\end{lemma}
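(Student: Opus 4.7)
\medskip

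\noindent
\textbf{Proof plan.} The statement is the exact analog for $\phi_2$ of Lemma~\mref{lemma:leadterma} for $\phi_1$, so the proof strategy is to mirror that argument with the roles of the two arguments swapped: since in $\phi_2(u,v)=P(P(u)d(v))-P(u)v+P(uv)+\lambda P(ud(v))$ the derivation is applied to $v$ (not $u$), it is the structure of $v$ that governs the leading term. I will handle the forward inclusion by a case analysis on $v$, and the reverse inclusion by constructing, for each element of the right-hand side, a pair $(u,v)$ whose $\phi_2(u,v)$ has the prescribed leading term.

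\medskip

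First, I dispose of the trivial boundary cases. If $v=1$ then $d(v)=0$ and $\phi_2(u,1)=0$. If $v=P(v_0)\in P(\nr)$, then $d(P(v_0))=v_0$ and
$$\phi_2(u,P(v_0))=P(P(u)v_0)-P(u)P(v_0)+P(uP(v_0))+\lambda P(uv_0)=0$$
by the Rota-Baxter axiom, so $\lbar{\phi_2(u,v)}=0$. Assume then $v\neq 1$ and $v\notin P(\nr)$. All four summands of $\phi_2(u,v)$ have the same $\deg_{\dnx}$, so the leading term is decided by comparing $\deg_P$.

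\medskip

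\emph{Case 1: $\deg_P(\lbar{d(v)})=\deg_P(v)$.} Then $P(P(u)d(v))$ has strictly the largest $\deg_P$, so $\lbar{\phi_2(u,v)}=P(P(u)\lbar{d(v)})$. Write the standard form $v=v_0P(\tilde v_0)\cdots v_kP(\tilde v_k)v_{k+1}$ with $v_i\in S(\dnx)$ (possibly $v_0$ or $v_{k+1}$ equal to $1$) and $\tilde v_j\in\nr$. By assumption at least one $v_i\notin\nz$; let $i$ be maximal such. Applying Lemma~\mref{lemma:diffleadterm} factorwise,
$$\lbar{d(v)}=v_0P(\tilde v_0)\cdots v_{i-1}P(\tilde v_{i-1})\lbar{d(v_i)}P(\tilde v_i)v_{i+1}\cdots P(\tilde v_k)v_{k+1}.$$
If $i=k+1$, the tail $\lbar{d(v_{k+1})}$ sits at the right end and $\lbar{\phi_2(u,v)}\in P(P(\nr)\nr\fun)$. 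If $i\leq k$ and $v_{k+1}\in\nz$, then exactly $r:=k-i+1$ pairs $(P(\nr)\nz)$ follow the differentiated factor and the word ends in $\nz$, landing in $\bigcup_{r\geq 1}P(P(\nr)\nr\fun(P(\nr)\nz)^r)$. If $i\leq k$ and $v_{k+1}=1$, similarly $r$ pairs follow and the word ends in $P(\tilde v_k)$, landing in $\bigcup_{r\geq 1}P(P(\nr)\nr\fun(P(\nr)\nz)^rP(\nr))$. The outer $P(u)$ supplies the initial $P(\nr)$ factor in each case.

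\medskip

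\emph{Case 2: $\deg_P(\lbar{d(v)})\neq\deg_P(v)$.} Then every indecomposable factor of $v$ in $S(\dnx)$ lies in $\nz$, forcing $v\in\Lambda(\nz,\nr)\setminus P(\nr)$ and $\deg_P(\lbar{d(v)})=\deg_P(v)-1$. Hence $\lbar{P(P(u)d(v))},\lbar{P(ud(v))}<_n\lbar{P(uv)}$ by definition of $<_n$. It remains to check $\lbar{P(u)v}<_n\lbar{P(uv)}$: if $v$ begins with a factor in $S(\dnx)$, then $\lbar{P(u)v}=P(u)v$, $\lbar{P(uv)}=P(uv)$, and the definition of $<_n$ gives $P(u)v<_n P(uv)$; if $v=P(v_1)v_2$, then using the Rota-Baxter reduction $P(u)P(v_1)$ inside the concatenation a direct comparison gives $\lbar{P(u)v}=P(\lbar{P(v_1)u})v_2<_n P(\lbar{uP(v_1)v_2})=\lbar{P(uv)}$. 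Thus $\lbar{\phi_2(u,v)}=P(\lbar{uv})\in P(\nr(\Lambda(\nz,\nr)\setminus P(\nr)))$.

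\medskip

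For the reverse inclusion, each element $w$ of the right-hand side is realized by an explicit choice: e.g.\ $w=P(P(u_0)v_1\lbar{d(v_2)})\in P(P(\nr)\nr\fun)$ is produced by taking $u:=u_0$ and $v:=v_1v_2$ (so $\lbar{d(v)}=v_1\lbar{d(v_2)}$); analogous constructions handle the other three constituents, and $0$ is recovered from the boundary cases. The main obstacle is the ordering comparison in Case 2, which requires splitting according to whether $v$ starts with a letter in $S(\dnx)$ or with a $P$-block (dual to the $uP(v)$ vs.\ $P(uv)$ comparison in Lemma~\mref{lemma:leadterma}); once that is established the remaining bookkeeping is a straightforward transcription of the $\phi_1$ argument.
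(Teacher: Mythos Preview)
Your approach is exactly the paper's: its entire proof of this lemma is the sentence ``The proof is similar to that of Lemma~\mref{lemma:leadterma},'' and you have correctly carried out the mirror-image argument by swapping the roles of $u$ and $v$ (since the derivation now hits $v$). One small slip: in Case~2, when $v=P(v_1)v_2$, the Rota-Baxter expansion of $P(u)P(v_1)$ yields $P(uP(v_1))+P(P(u)v_1)+\lambda P(uv_1)$, so the leading term you want is $P(\lbar{P(u)v_1})v_2$ (or $P(\lbar{uP(v_1)})v_2$), not $P(\lbar{P(v_1)u})v_2$; the subsequent comparison with $P(\lbar{uP(v_1)v_2})=\lbar{P(uv)}$ then goes through since the first indecomposable factor has strictly smaller degree.
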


\begin{proof}
The proof is similar to that of Lemma \mref{lemma:leadterma}.
\end{proof}

Note that only the first union components of Lemmas~\mref{lemma:leadterma} and \mref{lemma:leadtermb} do not involve $\nz$. Thus we have
\begin{prop}
$\{ \lbar{ \phi_1(u,v)  }, \lbar{ \phi_2(u,v) } \mid u,v \in \nr \} = P(\nr \fun P(\nr)) \cup P( P(\nr) \nr \fun) \cup \epsilon(\dnx),$
where
\begin{eqnarray*}
\epsilon(\dnx) &:=& \nr\bigcap \left( \Big( \bigcup_{r\geq 1} P(\nr \fun (P(\nr)\nz)^r P(\nr)) \Big) \bigcup P\Big( (\Lambda(\nz,\nr)\setminus P(\nr)) \nr\Big)\right. \\
&&\bigcup \Big( \bigcup_{r\geq 1} P( P(\nr) \nr \fun  (P(\nr)\nz)^r ) \Big) \\
&&
\bigcup  \Big( \left .\bigcup_{r\geq 1} P( P(\nr) \nr \fun  (P(\nr)\nz)^r P(\nr)) \Big) \bigcup P\Big( \nr (\Lambda(\nz,\nr)\setminus P(\nr)) \Big) \right) \bigcup \{0\}.
\end{eqnarray*}
\mlabel{prop:leadterm}
\end{prop}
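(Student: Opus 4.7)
The proposition is an immediate consolidation of Lemmas \ref{lemma:leadterma} and \ref{lemma:leadtermb}. My plan is to take the union of the two set descriptions and rearrange the components, separating those that do not involve $\nz$ from those that do.

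First I would read off from Lemma \ref{lemma:leadterma} that $\{\lbar{\phi_1(u,v)} \mid u,v\in\nr\}$ decomposes as a union of four pieces, exactly one of which --- namely $P(\nr\fun P(\nr))$ --- makes no reference to $\nz$; the remaining three pieces are $\bigcup_{r\geq 1} P(\nr\fun (P(\nr)\nz)^r P(\nr))$, $P((\Lambda(\nz,\nr)\setminus P(\nr))\nr)\cap \nr$, and $\{0\}$. Similarly, from Lemma \ref{lemma:leadtermb}, the set $\{\lbar{\phi_2(u,v)} \mid u,v\in\nr\}$ decomposes into five pieces, exactly one of which --- $P(P(\nr)\nr\fun)$ (after distributing the intersection with $\nr$) --- is free of $\nz$, with the remainder involving either $(P(\nr)\nz)^r$ factors or the set $\Lambda(\nz,\nr)\setminus P(\nr)$.

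I would then take the union of the two characterizations and pull out the two clean components as the first two summands $P(\nr\fun P(\nr))$ and $P(P(\nr)\nr\fun)$ in the proposition, and collect every remaining component into the auxiliary set $\epsilon(\dnx)$. A direct comparison shows that $\epsilon(\dnx)$, as defined in the statement, is exactly the union of these leftover pieces (with the two $\{0\}$'s absorbed into the explicit $\{0\}$ factor of $\epsilon(\dnx)$, and the intersection with $\nr$ placed to enforce membership in the Rota-Baxter word set where needed). For the two clean components I would also note briefly that a $\fun$-element is a nontrivial word in $M(\dnx)$, so it concatenates cleanly with the surrounding $\nr$ and $P(\nr)$ factors to yield an alternating product; this ensures that $P(\nr\fun P(\nr))$ and $P(P(\nr)\nr\fun)$ sit inside $\nr$ so that no extra intersection is needed.

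I do not anticipate any serious obstacle: the entire proposition is a matter of bookkeeping that repackages the outputs of the two lemmas. The only small care required is to verify that the intersection with $\nr$ in the second lemma distributes correctly over the four components, and to track how the trivial element $0$ and the empty $u=1$ degenerate cases are absorbed into $\epsilon(\dnx)$.
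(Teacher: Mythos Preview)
Your proposal is correct and matches the paper's approach exactly: the paper itself treats this proposition as an immediate consequence of Lemmas~\ref{lemma:leadterma} and~\ref{lemma:leadtermb}, with the one-line observation that only the first union component of each lemma is free of $\nz$, so that the remaining components are collected into $\epsilon(\dnx)$. Your additional remarks about the intersection with $\nr$ and the absorption of $\{0\}$ are accurate and, if anything, more careful than the paper's own presentation.
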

Every term in $\epsilon(\dnx)$ has a factor in $\nz$ and will thus disappear as $n$ goes to infinity.

\begin{lemma}
The compositions of multiplication are trivial
modulo $[S_n]$. \mlabel{lemma:comtrivial1}
\end{lemma}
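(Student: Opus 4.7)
The plan is to verify a core algebraic identity in $\bfk\nr$ that decomposes the composition of multiplication by $\phi_i(u_0,v_0)$ into a sum of normal placements of elements of $S_n$, with controlled leading terms. By Lemmas~\mref{lemma:leadterma} and~\mref{lemma:leadtermb}, every nonzero leading term $\lbar{\phi_i(u_0,v_0)}$ has the form $P(\alpha)$ for some $\alpha\in\nr$, so it lies in both $\nr P(\nr)$ and $P(\nr)\nr$. Consequently both right and left multiplication compositions arise, and by symmetry I focus on the right multiplication case for $\phi_1$; the case of $\phi_2$ and the left multiplication cases are handled by the mirror-image argument.

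For a composition $\phi_1(u_0,v_0)\diamondsuit w$ with $w=P(w_1)w_2\in P(\nr)\nr$, associativity of $\diamondsuit$ together with the observation that $w_2$ begins with a non-$P$ factor (so that postmultiplication by $w_2$ is mere concatenation and does not trigger any Rota-Baxter reduction) reduces the problem to the case $w=P(w_1)$. The central step is to establish the identity
\begin{equation*}
\phi_1(u_0,v_0)\diamondsuit P(w_1)\;=\;\phi_1(u_0,v_0P(w_1))\;+\;\phi_1(u_0,P(v_0)w_1)\;+\;\lambda\,\phi_1(u_0,v_0w_1)\;+\;P\bigl(\phi_1(u_0,v_0)\diamondsuit w_1\bigr).
\end{equation*}
This is verified by expanding both sides using the Rota-Baxter relation $P(a)\diamondsuit P(b)=P(aP(b))+P(P(a)b)+\lambda P(ab)$ applied to the $P$-pairs created by multiplying $P(d(u_0)P(v_0))$, $u_0 P(v_0)$, $P(u_0 v_0)$, $\lambda P(d(u_0)v_0)$ by $P(w_1)$; after regrouping, all sixteen terms cancel pairwise except for the outer $P$-wrapped remainder, which collects to $P(\phi_1(u_0,v_0)\diamondsuit w_1)$.

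With this identity, each summand on the right is of the form $q_j|_{s_j}$ for an appropriate $q_j\in\nrs$ and $s_j\in S_n$. To conclude triviality modulo $[S_n]$, I would verify: (i) normality of each $q_j|_{s_j}$, using Lemma~\mref{lemma:diffnormal} and the observation that the replacement of $\star$ creates no new $PP$-adjacency (for the first three summands the enclosing $q_j$ is the identity context, while for the last one $q_j=P(\star\diamondsuit w_1)$); and (ii) the leading-term bound $\lbar{q_j|_{s_j}}\leq_n\lbar{\phi_1(u_0,v_0)\diamondsuit P(w_1)}$, using Proposition~\mref{lemma:weakmonomial} together with a comparison of the invariants $(\dg_{P\cup\dnx},\dg_P)$ attached to each summand, both of which are non-increasing under the decomposition.

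The main obstacle will be the careful bookkeeping of the sixteen-term cancellation in the key identity and, more subtly, controlling the last summand $P(\phi_1(u_0,v_0)\diamondsuit w_1)$ when $w_1$ itself begins or ends with $P(\cdot)$: in that situation the inner product $\phi_1(u_0,v_0)\diamondsuit w_1$ requires further Rota-Baxter reductions before it is a linear combination of normal placements. However, each such further reduction strictly decreases the $(\dg_{P\cup\dnx},\dg_P)$ of the inner factor, so the argument terminates by an induction on this pair of invariants. The analogous identity and verification for $\phi_2$ and for left multiplications complete the proof.
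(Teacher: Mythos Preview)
Your approach is essentially the same as the paper's: both derive the key identity
\[
\phi_1(u,v)\diamondsuit P(w)=P(\phi_1(u,v)\diamondsuit w)+\phi_1(u,P(v)w)+\phi_1(u,vP(w))+\lambda\,\phi_1(u,vw),
\]
observe that the last three summands lie in $S_n$ with trivial context $q=\star$, and then handle the wrapped term $P(\phi_1(u,v)\diamondsuit w)$ by induction. The paper inducts on $\mathrm{dep}(w_1)$ for the first standard factor $w_1$ of $w$, while you induct on the degree pair $(\dg_{P\cup\dnx},\dg_P)$ of the inner argument; both quantities strictly decrease in the recursive step and are equivalent for this purpose.

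Two small points to tighten. First, your appeal to Lemma~\mref{lemma:diffnormal} for normality is misplaced: that lemma concerns $d^\ell(\star)$-contexts, whereas here the relevant contexts are $\star$ and $P(\star\, w)$, and normality is simply the absence of a $P\!P$-adjacency. Second, the leading-term bounds for the three $\phi_1$-summands are not immediate from the degree pair alone; the paper splits into the two cases $\deg_P(\lbar{d(u)})=\deg_P(u)$ and $\deg_P(\lbar{d(u)})\neq\deg_P(u)$ of Lemma~\mref{lemma:leadterma}, since $\lbar{\phi_1(u,v)}$ changes form between them. You will need this case distinction to verify, e.g., $\lbar{\phi_1(u,vP(w))}\leq_n\lbar{\phi_1(u,v)\diamondsuit P(w)}$.
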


\begin{proof}
 Let $f \in S_n$. Then $f=\phi_1(u,v)$ or $f=\phi_2(u,v)$ for some $u,v\in \nr$. We only consider the case when
$$ f = \phi_1(u,v) = P(d(u) P(v))- uP(v)+ P(uv) + \lambda
P(d(u) v),\ u,v \in  \nr,$$
since the case for $f=\phi_2(u,v)$ is similar.
It is sufficient to show that $\phi_1(u,v)P(w)$ and $P(w) \phi_1(u,v)$ are trivial modulo $[S_n]$. We first show that $\phi_1(u,v)P(w)$ is trivial modulo $[S_n]$. Note that $\lbar{\phi_1(u,v)} \in P(\nr)$.
From Eq.~(\ref{eq:rb}) we obtain
\begin{equation}
\begin{aligned}
\phi_1(u,v) P(w) =& P(d(u)P(v))P(w) -uP(v) P(w) + P(uv)P(w) + \lambda  P(d(u) v)P(w)\\
=& P(P(d(u)P(v))w) + P(d(u)P(v)P(w)) + \lambda P(d(u)P(v)w) \\
&- u P(v)P(w) + P(uv)P(w) + \lambda  P(d(u) v) P(w) \\
=& P(P(d(u)P(v))w) + P(d(u)P(P(v)w + vP(w) + \lambda vw)) + \lambda
P(d(u)P(v)w)\\
& -uP(P(v)w) -uP(vP(w)) -\lambda u P(vw) + P(P(uv)w)+ P(uvP(w))  \\
& +\lambda P(uvw) + \lambda P(P(d(u)v)w) +\lambda P(d(u)vP(w)) +\lambda^2 P(d(u)vw).
\end{aligned}
 \label{eq5}
\end{equation}
By the definition of $\phi_1(u,v)$, we have
\begin{align}\label{eq3}
P(P(d(u)P(v))w) =P(\phi_1(u,v)w) + P(uP(v)w) - P(P(uv)w) -\lambda
P(P(d(u)v)w),
\end{align}
and
\begin{equation}
\begin{aligned} \label{eq4}
&P(d(u)P(P(v)w + vP(w) + \lambda vw))\\
=& \phi_1(u,P(v)w + vP(w) + \lambda vw ) + uP(P(v)w + vP(w) + \lambda vw) \\
& - P(u(P(v)w + vP(w) + \lambda vw)) - \lambda P(d(u) (P(v)w + vP(w) + \lambda vw)) \\
=& \phi_1(u,P(v)w + vP(w) + \lambda vw) + uP(P(v) w) + uP(vP(w)) + \lambda uP(vw) - P(u P(v) w )\\
&  - P(uvP(w)) - \lambda P(uvw) -\lambda P(d(u) P(v) w ) -\lambda
P(d(u)vP(w)) -\lambda^2 P(d(u)vw)
\end{aligned}
\end{equation}
Substituting Eqs.~(\ref{eq3}) and (\ref{eq4}) into Eq.~(\ref{eq5}), we have
\begin{align*}
\phi_1(u,v)P(w) &=  P(\phi_1(u,v)w) + \phi_1(u,P(v)w + vP(w) + \lambda vw )\\
&= P(\phi_1(u,v)w) + \phi_1(u,P(v)w)  + \phi_1(u, vP(w)) +
\lambda \phi_1(u,  vw).
\end{align*}
The last three terms are
already in $S_n$ and hence are of the form $q|_s$ with $q=\star$ and $s\in S_n$. So to show that they are trivial modulo $[S]$ we just need to bound the leading terms.

Note that
$$\lbar{P(aP(b))}, \lbar{P(P(a)b)},
\lbar{P(ab)} \leq_n \lbar{P(a)P(b)}  \text{ for } a,b\in \nr.$$
If $\deg_P(u) = \deg_P(\lbar{d(u)})$, that is, if we are in Case 1 of Lemma~\mref{lemma:leadterma}, then we have
\begin{align*}
\overline{\phi_1(u,P(v)w ) } &=  \lbar{P(d(u)P(P(v)w))} \leq_n \lbar{ P(d(u) P(v)P(w))} \leq_n \lbar{P(d(u)P(v))P(w)}= \overline{\phi_1(u,v)P(w)},\\
\overline{\phi_1(u,vP(w) ) } &=  \lbar{P(d(u)P( vP(w) ))} \leq_n \lbar{ P(d(u) P(v)P(w))} \leq_n \lbar{P(d(u)P(v))P(w)}= \overline{\phi_1(u,v)P(w)},\\
\overline{\phi_1(u, vw ) } &=  \lbar{P(d(u)P( vw ))} \leq_n \lbar{ P(d(u) P(v)P(w))} \leq_n \lbar{P(d(u)P(v))P(w)}= \overline{\phi_1(u,v)P(w)}.
\end{align*}
If $\deg_P(u) \neq \deg_P(\lbar{d(u)})$, that is, if we are in Case 2 of Lemma~\mref{lemma:leadterma}, then we have
\begin{align*}
 \overline{\phi_1(u,P(v)w ) } &=  \lbar{ P( uP(v)w )} \leq_n \lbar{P( P(uv)w )} \leq_n \lbar{ P(uv)P(w) }= \overline{\phi_1(u,v)P(w)},\\
 \overline{\phi_1(u, vP(w) ) } &=  \lbar{ P( u vP(w) )} \leq_n  \lbar{ P(uv)P(w) }= \overline{\phi_1(u,v)P(w)},\\
  \overline{\phi_1(u, vw ) } &=  \lbar{ P( u vw )} \leq_n  \lbar{ P(uv)P(w) }= \overline{\phi_1(u,v)P(w)}.
\end{align*}
Thus
$$\phi_1(u,P(v)w)  + \phi_1(u, vP(w)) + \lambda
\phi_1(u,  vw) \equiv 0 \text{ mod } [S_n,\lbar{\phi_1(u,v)P(w)}\,]$$
and so $\phi_1(u,v) P(w) \equiv 0$ mod $[S_n]$ if and only if $ P(\phi_1(u,v)w) \equiv
0$ mod $[S_n,\lbar{\phi_1(u,v)P(w)}\,]$. Let $w = w_1w_2 \cdots w_k$ be the
standard decomposition of $w$. We prove the latter statement by induction on
$\mathrm{dep}(w_1)$.

If $\mathrm{dep}(w_1) = 0$, that is, $w_1 \in M(\Delta_n X)$, let $q :=
P(\star w) \in \nrs$. Then $$q|_{\phi_1(u,v)} =P(\phi_1(u,v) w) = P(\phi_1(u,v) w_1w_2 \cdots w_k)$$ and $q|_{\phi_1(u,v)}$ is normal by $w_1\in M(\Delta_n
X)$. If $\deg_P(u) = \deg_P(\lbar{d(u)})$, then
\begin{align*}
\overline{P(\phi_1(u,v) w)} =\overline{P(P(d(u)P(v)) w)} \leq_n \overline{P(d(u)P(v)) P(w)} = \overline{ \phi_1(u,v) P(w)},
\end{align*}
If $\deg_P(u) \neq \deg_P(\lbar{d(u)})$, then
\begin{align*}
\overline{P(\phi_1(u,v) w) } =  \overline{ P( P(uv) w)} \leq_n \overline{ P(uv) P(w)} =\overline{ \phi_1(u,v) P(w)}.
\end{align*}
Hence $P(\phi_1(u,v)w)\equiv 0$ mod $[S_n]$.

If $\mathrm{dep}(w_1) > 0$, we may suppose $w_1 =
P(\tilde{w})$ with $\tilde{w} \in
\nr$. Then $w_2\in \dnx$, as $w=w_1w_2\cdots w_k$ is the standard decomposition of $w$. Since $\mathrm{dep}(\tilde{w})<\mathrm{dep}(w_1)$,
by the induction hypothesis, we may assume that
$$\phi_1(u,v) P(\tilde{w})  = \sum_i c_i
p_i|_{s_i},$$
where $0\neq c_i\in \bfk, p_i\in \nrs, s_i\in S_n$, $p_i|_{s_i}$ is normal and $\lbar{p_i|_{s_i}} \leq
\lbar{\phi_1(u,v) P(\tilde{w})}$. Let $q_i := P(p_iw_2\cdots w_k)$. Since
$p_i|_{s_i}$ is normal and $w_2\in \dnx$, it follows that
$q_i|_{s_i}$ is normal. Furthermore, we have
\begin{align*}
P(\phi_1(u,v) w) &=P(\phi_1(u,v) w_1 w_2\cdots w_k) = P( \phi_1(u,v) P(\tilde{w})w_2\cdots w_k) \\
&= \sum_i c_i P( p_i|_{s_i}w_2 \cdots w_k) = \sum_i c_i q_i|_{s_i}
\end{align*}
and
$$\lbar{q_i|_{s_i}}= \lbar{P( p_i|_{s_i} w_2\cdots w_k)} \leq_n \lbar{P( \phi_1(u,v) P(\tilde{w}) w_2\cdots w_k )} = \lbar{P(\phi_1(u,v) w)} \leq_n
\lbar{\phi_1(u,v) P(w)}.$$
Therefore $P(\phi_1(u,v)w)\equiv 0$ mod $[S_n, \lbar{\phi_1(u,v) P(w)}]$.
This completes the induction. Hence $\phi_1(u,v) P(w) \equiv 0$
mod $[S_n]$, as needed.

With a similar argument, we can show that $P(w) \phi_1(u,v) \equiv 0$ mod $[S_n]$.
\end{proof}

\begin{lemma}
There are no intersection compositions in $S_n$.
\mlabel{lemma:comtrivial3}
\end{lemma}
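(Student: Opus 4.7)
The plan is to exploit the explicit description of leading terms of elements of $S_n$ given in Proposition~\ref{prop:leadterm} to show that all such leading terms are indecomposable in $\nr$ and hence have breadth~$1$. Once this is established, the breadth inequality in the definition of an intersection pair cannot be satisfied, so no intersection composition can occur.

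First, I would read off from Proposition~\ref{prop:leadterm} that
\[
\{\overline{\phi_1(u,v)},\,\overline{\phi_2(u,v)}\mid u,v\in\nr\}\setminus\{0\}\ \subseteq\ P(\nr).
\]
Indeed, each of the three pieces $P(\nr\,\fun\,P(\nr))$, $P(P(\nr)\,\nr\,\fun)$ and $\epsilon(\dnx)\setminus\{0\}$ consists exclusively of elements of the form $P(a)$ for some $a$ in $\nr$ (the factor $P$ is the outermost operator in every union component). Hence every nonzero leading term $\bar{f}$ of an element $f\in S_n$ is an element of $P(\nr)$, which is by definition indecomposable in $\nr$. Consequently, in the standard decomposition~\eqref{eq:sdec}, $\bar{f}$ has exactly one indecomposable factor, so $\mathrm{bre}(\bar f)=1$.

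Second, I would combine this with the defining inequality for an intersection pair. Suppose towards a contradiction that $f,g\in S_n$ admit an intersection composition, with nonzero leading terms $\bar f$ and $\bar g$. Then by definition there exist $u,v\in\nr$ and $w\in\nr$ with $w=\bar{f}u=v\bar{g}$ (or the symmetric variant) satisfying
\[
\max\{\mathrm{bre}(\bar f),\mathrm{bre}(\bar g)\}\,<\,\mathrm{bre}(w)\,<\,\mathrm{bre}(\bar f)+\mathrm{bre}(\bar g).
\]
By the first step, both breadths are equal to $1$, so this inequality becomes $1<\mathrm{bre}(w)<2$, which admits no integer solution. Therefore no intersection pair exists for any pair of leading terms in $S_n$, and the lemma follows. (The case where $\bar f=0$ or $\bar g=0$ is vacuous, since then $f=0$ or $g=0$ and no composition is defined.)

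I do not foresee any real obstacle here: the proof is essentially a one-line consequence of the leading-term calculation already carried out in Lemmas~\ref{lemma:leadterma} and~\ref{lemma:leadtermb} and collected in Proposition~\ref{prop:leadterm}. The only subtlety worth checking explicitly is that the ``residual'' set $\epsilon(\dnx)$ is also contained in $P(\nr)$; this is visible from its definition, since every union component there is wrapped in an outer $P(\cdot)$. Once that observation is recorded, the breadth argument closes the proof immediately.
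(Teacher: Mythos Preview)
Your proof is correct and follows essentially the same approach as the paper: both observe that every nonzero leading term of an element of $S_n$ lies in $P(\nr)$ (the paper cites Lemmas~\ref{lemma:leadterma} and~\ref{lemma:leadtermb} directly, you cite their summary in Proposition~\ref{prop:leadterm}) and hence has breadth~$1$, whence the intersection inequality $1<\mathrm{bre}(w)<2$ is impossible. Your additional remarks on the vacuous zero case and on $\epsilon(\dnx)\subseteq P(\nr)$ are accurate and make the argument slightly more explicit than the paper's version.
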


\begin{proof}
Let $f,g \in S_n$. By Lemmas \mref{lemma:leadterma} and \mref{lemma:leadtermb}, we have $\mathrm{bre}(\lbar{f})=1= \mathrm{bre}(\lbar{g})$. Suppose $w:= \lbar{f}u = v\lbar{g}$ gives an intersection composition.
Then by the definition of intersection composition, we have $1<\mathrm{bre}(w)<2$. This is a contradiction. Thus there are no intersection compositions in $S_n$.
\end{proof}

\begin{lemma}
The including compositions in $S_n$ are trivial.
\mlabel{lemma:comtrivial2}
\end{lemma}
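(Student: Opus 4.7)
My plan is to analyze the possible inclusion compositions $(f,g)^q_w = f - q|_g$, where $f,g \in S_n$, $w = \lbar{f} = q|_{\lbar{g}}$ and $q|_g$ is normal, and to show each is trivial modulo $[S_n,w]$. By Lemmas \mref{lemma:leadterma} and \mref{lemma:leadtermb}, both $\lbar{f}$ and $\lbar{g}$ have outermost operator $P$ and breadth $1$, so the placement $q$ must send $\star$ into a $P$\nobreakdash-bracketed position of $\lbar{f}$. This naturally splits the analysis into (i) $q = \star$, forcing $\lbar{f} = \lbar{g}$, and (ii) $q \neq \star$, in which case $\lbar{g}$ is a proper $P$-bracketed subterm of $\lbar{f}$.

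In case (i), the equality $\lbar{f} = \lbar{g}$ combined with the explicit descriptions of leading terms in Lemmas \mref{lemma:leadterma} and \mref{lemma:leadtermb} forces $f,g$ to be of the same type ($\phi_1$ or $\phi_2$) with matching ``top-level'' parameters; e.g.\ for $\phi_1$ in the generic subcase one gets $v=v'$ and $\lbar{d(u)}=\lbar{d(u')}$. The $\bfk$-bilinearity of $\phi_i$ in $(u,v)$ then yields $f-g = \phi_i(u-u',v)$, and because $\lbar{d(u-u')} <_n \lbar{d(u)}$ (the leading parts cancel), the leading term of $\phi_i(u-u',v)$ is strictly below $w$. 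Expanding $\phi_i(u-u',v) = \phi_i(u,v) - \phi_i(u',v)$ and further splitting $u-u'$ into its monomial components gives an explicit sum $\sum c_i q_i|_{s_i}$ with $s_i \in S_n$, $q_i = \star$, each $q_i|_{s_i}$ normal and $\lbar{q_i|_{s_i}} <_n w$.

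In case (ii), I use the identity
\begin{equation*}
  f - q|_g = (f - \lbar{f}) - q|_{g-\lbar{g}},
\end{equation*}
each summand of which has leading term strictly less than $w$ by Proposition \mref{lemma:weakmonomial}. The piece $q|_{g-\lbar{g}}$ is immediately handled by Lemma \mref{lemma:normalexp} (available since $d(S_n) \subseteq S_n$ and multiplication compositions are already trivial by Lemma \mref{lemma:comtrivial1}): writing $g-\lbar{g}$ as a sum of monomials each $<_n \lbar{g}$, the weakly monomial property yields $\lbar{q|_t} <_n q|_{\lbar{g}} = w$ for each such monomial $t$, giving a decomposition in the desired form. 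For the piece $f - \lbar{f}$, the trick is to recognize that $q|_g$, viewed as an instance of the $\phi_i$-family with the ``inner'' parameters swapped out for whatever sits around $\lbar{g}$ inside $\lbar{f}$, lets me rewrite $f - q|_g$ as a linear combination of $\phi_i$-generators whose leading parts cancel, exactly as in case (i); applying case (i) to this residue completes the reduction.

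The main obstacle will be the bookkeeping in case (ii): one has to enumerate the ways $\lbar{g}$ can sit inside $\lbar{f}$ --- as the inner factor $P(v)$ of a $\phi_1$-leading term $P(\lbar{d(u)}P(v))$, as $P(u)$ of a $\phi_2$-leading term $P(P(u)\lbar{d(v)})$, or as a sub-$P$ nested deeper inside $u$, $v$, or $\lbar{d(u)}$, together with the $\calz_n$-involving subcases from Proposition \mref{prop:leadterm} --- and to verify in each configuration that the normality required by the relation $\equiv \mod [S_n,w]$ is preserved. Fortunately, the rewriting mechanism needed in each subcase already parallels the manipulations carried out in Lemma \mref{lemma:comtrivial1} for the compositions of multiplication, so the proof reduces in essence to reusing those identities together with the bilinearity argument of case (i).
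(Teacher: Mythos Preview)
Your treatment of case (ii) has a genuine gap. The decomposition $f - q|_g = (f-\lbar{f}) - q|_{g-\lbar{g}}$ is correct as an identity, but it does not advance the argument: neither summand lies in $\Id(S_n)$ individually, since $\lbar{f}$ and $\lbar{g}$ are single monomials, not elements of the ideal. What you must produce is an expression $\sum c_iq_i|_{s_i}$ with $s_i\in S_n$ and $q_i|_{\lbar{s_i}}<_n w$, and the two pieces you isolate are simply not of that shape. Relatedly, your appeal to Lemma~\mref{lemma:normalexp} for $q|_{g-\lbar g}$ is misplaced: that lemma concerns $q|_s$ for $s\in S$, whereas $g-\lbar g\notin S_n$. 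Applying it instead to $q|_g$ itself only gives $q_i|_{\lbar{s_i}}\leq_n w$ with equality realized by $q|_g$, so no strict bound is obtained.

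The paper proceeds differently and does the real work you only gesture at in your final sentence: for each of the possible shapes of $q$ (there are about two dozen, coming from the positions in the displayed list $P(d(p)P(v)),\,P(d(u)P(p)),\,\ldots,\,P(\star\, d(v))$ combined with the three possible forms of $\lbar g$), one expands both $f$ and $q|_g$ using the defining four-term formulas for $\phi_1,\phi_2$, subtracts, and then systematically rewrites each surviving term either as some $\phi_i(u',v')\in S_n$ or as $p|_{\phi_i(r,s)}$ for an explicit $p\in\nrs$. Only after this algebraic rearrangement does one invoke Lemma~\mref{lemma:normalexp} on the latter kind of term, and the leading-term bounds are checked directly against $w$. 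There is no shortcut via your decomposition; the cancellations that make the composition trivial depend on the specific structure of $\phi_1$ and $\phi_2$ and must be exhibited case by case.

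A smaller point on case (i): your claim that $\lbar f=\lbar g$ forces $v=v'$ and $\lbar{d(u)}=\lbar{d(u')}$ holds only in the ``Case~1'' regime of Lemma~\mref{lemma:leadterma}. In Case~2 (where $u\in\Lambda(\nz,\nr)\setminus P(\nr)$) the leading term is $P(\lbar{uv})$, and different pairs $(u,v)$ can give the same $\lbar{uv}$; for instance $u=x^{(n)},\,v=y^{(n)}z$ versus $u=x^{(n)}y^{(n)},\,v=z$. Your bilinearity trick $f-g=\phi_i(u-u',v)$ then does not apply, and a separate argument is needed to show the resulting $f-g$ is trivial.
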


\begin{proof}
We first list all possible inclusion compositions from $f, g\in S_n$, namely those $f, g\in S_n$ such that  $w:=\overline{f}=q|_{\overline{g}}$ for some $q\in \nrs$.

We begin with the case when $q=\star$. Then we have $w:=\lbar{f}=\lbar{g}$. From Lemmas \mref{lemma:leadterma} and \mref{lemma:leadtermb}, we must have
$$f =\phi_1(u,v) = g, \text{ or } f =\phi_2(u,v) = g.$$
Hence $f-g$ is trivial modulo $[S_n,w]$, as needed.

We next consider the case when $q\neq \star$. We need $\lbar{f}=q|_{\lbar{g}}$ where $\lbar{f}$ is of the form $\lbar{P(w)}$ with $w=d(u)P(v)$, $w=P(u)d(v)$ or $w=uv$ while $\lbar{g}$ is also of the form $P(d(r)P(s))$, $P(P(r)d(s))$ or $P(rs).$ Thus $q$ is of the forms
$$P(d(p)P(v)), \ P(d(u)P(p)), \ P(P(p)d(v)), \ P(P(u)d(p)), \ P(pv), \ P(up), \ P(d(u)\star), \ P(\star d(v)), $$
where $p\in \nr^\star$ and where the $\star$ in $p$ or by itself is replaced by $\lbar{g}$ which can be of the forms $P(d(r)P(s))$, $P(P(r)d(s))$ or $P(rs)$.  Thus there are
24 possibilities. The last two cases in the displayed list occur when the $P$ in $P(q)$ and the $P$ in $\lbar{g}$ coincide.
Thus all the including compositions $\lbar{f}=q|_{\lbar{g}}$ with $q\neq \star$ are of the forms
$$P(d(p|_{\lbar{g}})P(v)),  P(d(u)P(p|_{\lbar{g}})),  P(P(p|_{\lbar{g}})d(v)),  P(P(u)d(p|_{\lbar{g}})),  P(p|_{\lbar{g}}v),  P(up|_{\lbar{g}}),  P(d(u)\star|_{\lbar{g}}),  P(\star|_{\lbar{g}} d(v)), $$
with $\bar{g}= P(d(r) P(s)), P(P(r)d(s))$ or $P(rs)$.

With a similar argument as in~\cite[Lemma~5.7]{GGZ}, we can show the triviality of the ambiguities of the compositions
$$P(d(u)P(p|_{P(d(r) P(s))})), \ P(d(p|_{P(d(r) P(s))}) P(v)),\ P(d(u)P(d(r)P(s))),\  P(P(d(r)P(s)) d(v)).$$

We next check that the ambiguity of the composition
$P(d(u)P(p|_{P(P(v)d(w))}))$ is trivial.
This is the case when $w=\lbar{f}=q|_{\lbar{g}}$ where $q=P(d(u)P(p))$ for some $p\in \nr^\star$.
Then $f$ and $g$ of $S_n$ are of the form
\begin{align*}
&f = \phi_1(u,v) = P(d(u) P(v))- uP(v)+ P(uv) + \lambda P(d(u) v), \\
 &g = \phi_2(r,s)= P(P(r)d(s)) - P(r)s + P(rs) +\lambda P(rd(s)),
\end{align*}
where $\lbar{f} = \lbar{  P(d(u) P(v))}$ and $\lbar{g} = \lbar{ P(P(r)d(s))}$. Further $v=
p|_{\overline{g}}=p|_{\overline{\phi_2(r,s)}} = p|_{\lbar{ P(P(r)d(s)) } }$
for
some $p\in \nrs$ and
$$w = \overline{f} =
\overline{\phi_1(u,v)}=\lbar{P(d(u)P(v))} = \lbar{
P(d(u)P(p|_{\overline{g}})) } = \lbar{ q|_{\overline{g}}} =
q|_{\overline{g}}$$
with $q= P(d(u)P(p)) \in \nr^\star$ and $q|_g$ being normal. Then
\begin{align*}
f &=\phi_1(u,v) = P(d(u)P(p|_{P(P(r)d(s))})) -uP(p|_{P(P(r)d(s))}) +
P(u p|_{P(P(r)d(s))}) + \lambda P(d(u) p|_{P(P(r)d(s))})
\end{align*}
and
\begin{align*}
 q|_g &=
q|_{\phi_2(r,s)} = P(d(u)P(p|_{P(P(r)d(s))})) - P(d(u)P(p|_{P(r)s})) +
P(d(u)P(p|_{P(rs)})) + \lambda P(d(u)P(p|_{ P(rd(s))})).
\end{align*}
So we have
\begin{equation}
\begin{aligned}
(f,g)_w =f -q|_g =&-uP(p|_{P(P(r)d(s))}) +
P(u p|_{P(P(r)d(s))}) + \lambda P(d(u) p|_{P(P(r)d(s))}) \\
& + P(d(u)P(p|_{P(r)s})) -
P(d(u)P(p|_{P(rs)})) - \lambda P(d(u)P(p|_{ P(rd(s))})).
\end{aligned}
\mlabel{eq19}
\end{equation}
From the definition of~$\phi_1(u,v)$ and~$\phi_2(r,s)$, we have
{\small
\begin{equation}
\begin{aligned}
-uP(p|_{P(P(r)d(s))}) &=  -uP(p|_{\phi_2(r,s)}) - uP(p|_{P(r)s}) +
uP(p|_{P(rs)}) + \lambda uP(p|_{P(rd(s))}),\\
P(u p|_{P(P(r)d(s))}) &=  P(up|_{\phi_2(r,s)}) + P(up|_{P(r)s}) -
P(up|_{P(rs)}) - \lambda P(up|_{P(r d(s))}),\\
\lambda P(d(u) p|_{P(P(r)d(s))}) & = \lambda P(d(u)
p|_{\phi_2(r,s)}) + \lambda P(d(u) p|_{P(r)s}) - \lambda P(d(u)
p|_{P(rs)}) - \lambda^2 P(d(u) p|_{P(rd(s))}),\\
P(d(u)P(p|_{P(r)s})) &= \phi_1(u, p|_{P(r)s}) + uP(p|_{P(r)s}) - P(u
p|_{P(r)s}) -\lambda P(d(u) p|_{P(r)s}),\\
-  P(d(u)P(p|_{P(rs)}))&= - \phi_1(u, p|_{P(rs)}) - uP(p|_{P(rs)}) +
P(u p|_{P(rs)}) + \lambda P(d(u) p|_{P(rs)}),\\
- \lambda P(d(u)P(p|_{ P(rd(s))})) &= - \lambda \phi_1(u, p|_{P(rd(s))}) - \lambda uP(p|_{P(rd(s))}) + \lambda P(u
p|_{P(rd(s))}) + \lambda^2 P(d(u) p|_{P(rd(s))}).
\end{aligned}
\mlabel{eq20}
\end{equation}
}
From Eqs.~(\ref{eq19}) and (\ref{eq20}), it follows that
$$(f,g)_w  = -uP(p|_{\phi_2(r,s)}) + P(up|_{\phi_2(r,s)}) + \lambda
P(d(u)p|_{\phi_2(r,s)}) + \phi_1(u, p|_{P(r)s})- \phi_1(u, p|_{P(rs)}) -
\lambda \phi_1(u, p|_{P(rd(s))}).$$
By Lemma \mref{lemma:operator ideal}, we
have
$$uP(p|_{\phi_2(r,s)}), P(up|_{\phi_2(r,s)}),
P(d(u)p|_{\phi_2(r,s)}) \in \Id(S_n) $$
and
$$\phi_1(u, p|_{P(r)s}), \phi_1(u, p|_{P(rs)}),
\phi_1(u, p|_{P(rd(s))}) \in S_n\subseteq \Id(S_n).$$
Since
\begin{align*}
\overline{ uP(p|_{\phi_2(r,s)}) }, \ \overline{ P(up|_{\phi_2(r,s)})
}, \ \overline{ P(d(u)p|_{\phi_2(r,s)}) } <_n \overline{\phi_1(u,p|_{\phi_2(r,s)})}=\overline{\phi_1(u,v)} =w
\end{align*}
and
\begin{align*}
\overline{ \phi_1(u, p|_{P(r)s}) }, \ \overline{\phi_1(u, p|_{P(rs)}) }, \ \overline{ \phi_1(u, p|_{P(rd(s))}) } <_n
\overline{\phi_1(u, p|_{\overline{\phi_2(r,s)}})} =
\overline{\phi_1(u,v)}=w,
\end{align*}
we conclude that $(f,g)_w \equiv 0$ mod $[S_n,w]$.

Next, we check that the ambiguity of composition $P(P(u)d(q|_{P(d(v) P(w))}))$ is trivial. This is the case when
$w=\lbar{f}=q|_{\lbar{g}}$ for some $q=P(P(u)d(p))$ for some $p\in
\nr^\star$. Then the two elements $f$ and $g$ of $S_n$ are of the form
\begin{align*}
&f = \phi_2(u,v) = P(P(u)d(v)) - P(u)v + P(uv) +\lambda P(ud(v)), \\
&g = \phi_1(r,s)= P(d(r) P(s))- rP(s)+ P(rs) + \lambda P(d(r) s),
\end{align*}
where $\lbar{f} = \lbar{P(P(u)d(v))}$ and $\lbar{g} = \lbar{P(d(r)
  P(s))}$. Thus $v= p|_{\overline{g}}=p|_{\overline{\phi_1(r,s)}} =
p|_{\lbar{ P(d(r)P(s))} }$
for some $p\in \nrs$ and
$$w = \overline{f} =
\overline{\phi_2(u,v)}=\lbar{P(P(u)d(v))} = \lbar{
P(P(u)d(p|_{\overline{g}})) } = \lbar{ q|_{\overline{g}}} =
q|_{\overline{g}}$$
with $q= P(P(u)d(p)) \in \nrs$ and $q|_g$ being normal. Then
\begin{align*}
f &=\phi_2(u,v) = P(P(u)d(p|_{P(d(r)P(s))})) - P(u)p|_{P(d(r)P(s))} +
P(u p|_{P(d(r)P(s))}) + \lambda P(ud(p|_{P(d(r)P(s))}))
\end{align*}
and
\begin{align*}
 q|_g &=
q|_{\phi_1(r,s)} = P(P(u)d(p|_{P(d(r)P(s))})) - P(P(u)d(p|_{rP(s)})) +
P(P(u)d(p|_{P(rs)})) + \lambda P(P(u)d(p|_{ P(d(r)s)})).
\end{align*}
So we have
\begin{equation}
\begin{aligned}
(f,g)_w =&f -q|_g\\
 =& - P(u)p|_{P(d(r)P(s))} +
P(u p|_{P(d(r)P(s))}) + \lambda P(ud(p|_{P(d(r)P(s))})) \\
&  + P(P(u)d(p|_{rP(s)})) -
P(P(u)d(p|_{P(rs)})) - \lambda P(P(u)d(p|_{ P(d(r)s)})).
\end{aligned}
\mlabel{eq190}
\end{equation}
By the definition of $\phi_1(r,s)$ and $\phi_2(u,v)$, we have
{\small
\begin{equation*}
\begin{aligned}
 - P(u)p|_{P(d(r)P(s))} &=  -P(u)p|_{\phi_1(r,s)} - P(u)p|_{rP(s)} +
P(u)p|_{P(rs)} + \lambda P(u)p|_{P(d(r)s)},\\
P(u p|_{P(d(r)P(s))}) &=  P(up|_{\phi_1(r,s)}) + P(up|_{rP(s)}) -
P(up|_{P(rs)}) - \lambda P(up|_{P(d(r)s)}),\\
\lambda P(u d(p|_{P(d(r)P(s))} ) & = \lambda P(u d( p|_{\phi_1(r,s)} )) + \lambda P(u d( p|_{rP(s)} )) - \lambda P(u
d(p|_{P(rs)})) - \lambda^2 P(u d(p|_{P(d(r)s)})),\\
P(P(u)d(p|_{rP(s)})) &= \phi_2(u, p|_{rP(s)}) + P(u)p|_{rP(s)} - P(u
p|_{rP(s)}) -\lambda P(u d( p|_{rP(s)} ) ),\\
-P(P(u)d(p|_{P(rs)}))&= - \phi_2(u, p|_{P(rs)}) - P(u) p|_{P(rs)} +
P(u p|_{P(rs)}) + \lambda P(u d(p|_{P(rs)} ) ),\\
- \lambda P(P(u)d(p|_{ P(d(r)s)})) &= - \lambda \phi_2(u, p|_{ P(d(r)s)}) - \lambda P(u) p|_{ P(d(r)s)} + \lambda P(u
p|_{ P(d(r)s)}) + \lambda^2 P(u d( p|_{ P(d(r)s)} ) ).
\end{aligned}
\mlabel{eq200}
\end{equation*}
}
Then Eq.~(\ref{eq190}) becomes
$$(f,g)_w  =  -P(u)p|_{\phi_1(r,s)} + P(up|_{\phi_1(r,s)}) +  \lambda P(u d( p|_{\phi_1(r,s)} ))  + \phi_2(u, p|_{rP(s)}) - \phi_2(u, p|_{P(rs)}) - \lambda \phi_2(u, p|_{ P(d(r)s)}).$$
From Lemma \mref{lemma:operator ideal}, we have
$$ P(u)p|_{\phi_1(r,s)}, P(up|_{\phi_1(r,s)}), P(u d( p|_{\phi_1(r,s)} ))  \in \Id(S_n) $$
and
$$\phi_2(u, p|_{rP(s)}),  \phi_2(u, p|_{P(rs)}), \phi_2(u, p|_{ P(d(r)s)}) \in S_n\subseteq \Id(S_n).$$
Since
\begin{align*}
\overline{P(u)p|_{\phi_1(r,s)} }, \ \overline{ P(up|_{\phi_1(r,s)}) }, \ \overline{ P(u d( p|_{\phi_1(r,s)} ))  } <_n \overline{\phi_2(u,p|_{\phi_1(r,s)})}=\overline{\phi_2(u,v)} =w
\end{align*}
and
\begin{align*}
\overline{ \phi_2(u, p|_{ rP(s) } ) }, \ \overline{\phi_2(u, p|_{P(rs)}) }, \ \overline{ \phi_2(u, p|_{P(d(r)s )}) } <_n
\overline{\phi_2(u, p|_{\lbar{\phi_1(r,s)}})} = \overline{\phi_2(u,v)}=w,
\end{align*}
we have that $(f,g)_w \equiv 0$ mod $[S_n,w]$.

We last check the ambiguity of composition $P(p|_{P(d(r)P(s))} v )$ is
trivial.
This is the case when $w = \lbar{f} =
\lbar{q|_g}$, where $q= P(p v)$ for some $p\in \nrs$. Then $f$ and $g$
of $S_n$ are of the form
\begin{align*}
f &= \phi_1( p|_{P( d(r)P(s) ) }, v) = P(p|_{P( d(r)P(s) ) } v) + P(d(p|_{P( d(r)P(s) ) }) P(v) ) - p|_{P( d(r)P(s) ) } P(v) + \lambda P(d(p|_{P( d(r)P(s) ) }) v )\\
g &= \phi_1(r,s) = P(d(r) P(s)) - rP(s) + P(rs) - \lambda P(d(r) s),
\end{align*}
where $\lbar{f} = \lbar{P(p|_{P( d(r)P(s) ) } v)}$ and $\lbar{g} = \lbar{P(d(r) P(s)) }$. Then
\begin{equation}
\begin{aligned}
(f,g)_w =& f - q|_g\\
 = &P( d( p|_{P(d(r)P(s))} ) P( v) ) - p|_{P(d(r)P(s))} P(v) + \lambda P(d(p|_{P(d(r)P(s))}) v ) \\
& + P(p|_{rP(s)} v ) - P(p|_{P(rs)} v ) - \lambda P(p|_{P(d(r)s)} v ).
\mlabel{eq:newtri}
\end{aligned}
\end{equation}
Since
\begin{align*}
P( d( p|_{ P(d(r)P(s)) } ) P( v) ) &= P( d( p|_{ \phi_1(r,s) } ) P( v) ) + P( d( p|_{ rP(s) } ) P( v) ) - P( d( p|_{ P(rs) } ) P( v) ) - \lambda P( d( p|_{ P(d(r)s) } ) P( v) )\\
- p|_{ P(d(r)P(s)) } P(v) &= - p|_{ \phi_1(r,s) } P(v) - p|_{ rP(s) } P(v) + p|_{ P(rs) } P(v) + \lambda p|_{ P(d(r)s) } P(v)\\
\lambda P(d( p|_{ P(d(r)P(s)) } ) v )& =  \lambda P(d( p|_{ \phi_1(r,s) } ) v ) + \lambda P(d( p|_{ rP(s) } ) v ) - \lambda P(d( p|_{ P(rs) } ) v ) - \lambda^2 P(d( p|_{ P(d(r)s) } ) v )\\
P(p|_{rP(s)} v ) &= \phi_1( p|_{rP(s)}, v) - P(d( p|_{rP(s)} ) P(v) ) + p|_{rP(s)}P(v) - \lambda P(d(p|_{rP(s)}) v )\\
 - P(p|_{P(rs)} v )&= -\phi_1( p|_{P(rs)}, v) + P(d( p|_{P(rs)} ) P(v) ) - p|_{P(rs)}P(v) + \lambda P(d(p|_{P(rs)}) v )\\
 - \lambda P(p|_{P(d(r)s)} v ) &= - \lambda \phi_1( p|_{ P(d(r)s)}, v) + \lambda P(d( p|_{P(d(r)s)} ) P(v) ) - \lambda p|_{P(d(r)s)}P(v) + \lambda^2 P(d(p|_{P(d(r)s)}) v ),
\end{align*}
Eq.~(\mref{eq:newtri}) becomes
$$ f - q|_g = P( d( p|_{ \phi_1(r,s) } ) P( v) ) - p|_{ \phi_1(r,s) } P(v)+ \lambda P(d( p|_{ \phi_1(r,s) } ) v ) +  \phi_1( p|_{rP(s)}, v) -\phi_1( p|_{P(rs)}, v)  - \lambda \phi_1( p|_{ P(d(r)s)}, v).$$
From Lemma \mref{lemma:operator ideal}, we have
$$ P( d( p|_{ \phi_1(r,s) } ) P( v) ),  p|_{ \phi_1(r,s) } P(v), P(d( p|_{ \phi_1(r,s) } ) v ) \in \Id(S_n)$$
and
$$ \phi_1( p|_{rP(s)}, v), \phi_1( p|_{P(rs)}, v), \phi_1( p|_{ P(d(r)s)}, v) \in S_n \subseteq \Id(S_n).$$
Since
$$ \lbar{ P( d( p|_{ \phi_1(r,s) } ) P( v) ) },  \lbar{ p|_{ \phi_1(r,s) } P(v) },  \lbar{ P(d( p|_{ \phi_1(r,s) } ) v ) } <_n \lbar{ P( p|_{\phi_1(r,s ) } v ) } =\lbar{f} =w $$
and
$$\lbar{ \phi_1( p|_{rP(s)}, v) }, \lbar{ \phi_1( p|_{P(rs)}, v)}, \lbar{ \phi_1( p|_{ P(d(r)s)}, v)} <_n \lbar{ \phi_1( p|_{P(d(r)P(s))}, v ) } = \lbar{q|_g} = w,$$
we have that $(f,g)_w \equiv 0$ mod $[S_n, w]$.

With a similar argument, we can show the triviality of the
ambiguities of the other compositions.
\end{proof}

By Lemmas \mref{lemma:comtrivial1}, \mref{lemma:comtrivial3} and \mref{lemma:comtrivial2}, it
follows immediately that

\begin{theorem}
$S_n$ is a Gr\"{o}bner-Shirshov basis in $\bfk \nr$. Hence $\mathrm{Irr}(S_n)$ in Theorem~\mref{thm:CD lemma} is a $\bfk$-basis of $\bfk\nr /\Id(S_n)$.
 \mlabel{thm:GSbase}
\end{theorem}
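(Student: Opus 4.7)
The plan is straightforward: the theorem is a direct synthesis of the three preparatory lemmas just established together with the Composition-Diamond Lemma. First I would record the two standing hypotheses needed to apply Theorem~\mref{thm:CD lemma}. The generators of $S_n$ are $\phi_1(u,v)$ and $\phi_2(u,v)$, each of which is monic with respect to $<_n$ by the leading-term computations in Lemmas~\mref{lemma:leadterma} and~\mref{lemma:leadtermb}. The closure $d(S_n) \subseteq S_n$ was already checked via Eq.~(\mref{eq:zero}) in the remark following the definition of $S_n$. With these in place, the hypotheses of the Composition-Diamond Lemma are satisfied.

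Next I would verify each of the clauses in Definition~\mref{def:trivial} that together define a Gr\"obner-Shirshov basis. Triviality of the compositions of left and right multiplication modulo $[S_n]$ is exactly the content of Lemma~\mref{lemma:comtrivial1}. Triviality of intersection compositions holds vacuously: Lemma~\mref{lemma:comtrivial3} shows that no pair $f, g \in S_n$ admits an intersection composition at all, since the leading monomials in $S_n$ all have breadth one. Finally, triviality of the inclusion compositions modulo $[S_n, w]$ is exactly Lemma~\mref{lemma:comtrivial2}. Taken together these statements say precisely that $S_n$ is a Gr\"obner-Shirshov basis in $\bfk\nr$.

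Having established this, I would invoke the equivalence \mref{it:cda}$\Leftrightarrow$\mref{it:cdc} in Theorem~\mref{thm:CD lemma} to conclude that the set
$$
\mathrm{Irr}(S_n) \;=\; \nr \setminus \{ q|_{\lbar{s}} \mid q \in \nrs,\ s \in S_n,\ q|_s \text{ is normal}\}
$$
is a $\bfk$-basis of $\bfk\nr / \Id(S_n) = \bfk\nr / I_{\mathrm{ID},n}$, which is the second assertion of the theorem. Since all the technical work has been carried out in the preceding lemmas, no residual obstacle remains at this stage; the genuine difficulty lay earlier, in the inclusion-composition case analysis of Lemma~\mref{lemma:comtrivial2}, where a large number of overlap patterns between the two families $\phi_1$ and $\phi_2$ had to be reduced, and the present theorem is simply the final assembly step that packages those results through the Composition-Diamond Lemma.
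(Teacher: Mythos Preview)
Your proposal is correct and matches the paper's own proof, which simply states that the result follows immediately from Lemmas~\mref{lemma:comtrivial1}, \mref{lemma:comtrivial3}, and \mref{lemma:comtrivial2}. Your additional remarks about monicity and the closure $d(S_n)\subseteq S_n$ make explicit the standing hypotheses that the paper leaves implicit, but the argument is otherwise identical.
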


\subsection{Bases for free integro-differential algebras}
\mlabel{ss:bases}
We next identify the forms of elements in $\mathrm{Irr}(S_n)$, allowing us to obtain a canonical basis of $\bfk\nr/\Id(S_n)$.

For any $u,v\in M(\dnx)$, let $u=u_1\cdots u_\ell$ and $v=v_1\cdots v_m$ with $u_i,v_j\in \dx, 1\leq i\leq \ell, 1\leq j\leq m$. Note that, by the definition of $<_n$, we have
\begin{equation*}
u <_n v \Leftrightarrow \left\{\begin{array}{l} \ell< m, \\
\text{or } \ell=m \text{ and } \exists 1\leq i_0\leq \ell \text{ such that } u_i=v_i \text{ for } 1\leq i<i_0 \text{ and } u_{i_0}<v_{i_0}, \end{array}\right.
\mlabel{eq:lex}
\end{equation*}

We now introduce the key concept to identify $\mathrm{Irr}(S_n)$.

\begin{defn}
For any $u\in M(\Delta X)$, $u$ has a unique decomposition
\begin{align}
u=u_0\cdots u_k, \text{ where } u_0,\cdots, u_k\in \Delta X. \notag
\end{align}
Call $u$ {\bf functional} if either $u = 1$ or $u_k\in
X$.
Write
$$ \cala_f:=\{u\in M(\Delta X)\,|\, u \text{ is functional }\},\ \cala_{n,f} := \cala_{f} \cap M(\Delta_n X)) \text{ and } A_f := \bfk \cala_f. $$
\end{defn}

\begin{lemma}
 $M(\Delta X) = \cala_{d} \sqcup \cala_{f}$ and $M(\Delta_n X)= \cala_{n,d} \sqcup \cala_{n,f}$.
 \mlabel{lemma:decomposition}
\end{lemma}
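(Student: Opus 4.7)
The plan is to prove the partition by classifying words in $M(\Delta X)$ according to their last letter. Explicitly, $\cala_f$ consists of the word $1$ together with those nontrivial words whose final letter lies in $X = \{x^{(0)} \mid x \in X\}$, while I claim $\cala_d$ consists precisely of those nontrivial words whose final letter lies in $\Delta X \setminus X$, i.e., has positive derivative order. Given this characterization the partition follows at once: every nontrivial $u \in M(\Delta X)$ has a unique last letter $u_k = x^{(j)} \in \Delta X$, and $j = 0$ or $j \geq 1$ are mutually exclusive.

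To establish the characterization of $\cala_d$, I first read off that the containment $\cala_d \subseteq M(\Delta X) \setminus \cala_f$ is immediate from Lemma~\mref{lemma:diffleadterm}: if $w = w_0 \cdots w_k \in S(\Delta X)$ with $w_i \in \Delta X$, then $\overline{d(w)} = w_0 \cdots w_{k-1} d(w_k)$, and the last letter $d(w_k) = w_k^{(\cdot)+1}$ has derivative order at least $1$, hence lies in $\Delta X \setminus X$. For the reverse direction, given any nontrivial $u = u_0 \cdots u_k \in M(\Delta X)$ with $u_k = x^{(j)}$ and $j \geq 1$, I produce a preimage explicitly: set
\[
w := u_0 \cdots u_{k-1} \, x^{(j-1)} \in S(\Delta X),
\]
which is nonempty (even if $k = 0$, as $x^{(j-1)} \in \Delta X$). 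Applying Lemma~\mref{lemma:diffleadterm} to $w$ gives $\overline{d(w)} = u_0 \cdots u_{k-1} \, d(x^{(j-1)}) = u_0 \cdots u_{k-1} \, x^{(j)} = u$, so $u \in \cala_d$.

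This immediately yields $M(\Delta X) = \cala_d \sqcup \cala_f$. For the truncated version, the same proof restricts: if $u \in M(\Delta_n X)$ ends in $x^{(j)}$ with $1 \leq j \leq n$, then $x^{(j-1)} \in \Delta_{n-1} X \subseteq \Delta_n X$, so the witness $w$ above lies in $S(\Delta_n X)$, and the second clause of Lemma~\mref{lemma:diffleadterm} (which is applicable precisely because $u_k = x^{(j-1)} \in \Delta_{n-1} X$) again gives $\overline{d(w)} = u \in M(\Delta_n X)$, hence $u \in \cala_d \cap M(\Delta_n X) = \cala_{n,d}$. Conversely, any $u \in \cala_{n,d}$ is in $\cala_d \subseteq M(\Delta X) \setminus \cala_f$ by the first half, and since $u \in M(\Delta_n X)$ this gives $u \in M(\Delta_n X) \setminus \cala_{n,f}$.

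There is no real obstacle: the statement is essentially a direct reading of Lemma~\mref{lemma:diffleadterm}, together with the observation that lowering the final index by one provides a preimage under $\overline{d(\,\cdot\,)}$. The only minor point to watch is the boundary case $k = 0$ (the word $u$ consists of a single letter of positive order), where one must check that $w = x^{(j-1)}$ is still a legitimate element of $S(\Delta X)$, and the truncation condition $j-1 \leq n-1$ in the bounded case, both of which are immediate.
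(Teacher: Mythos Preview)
Your proof is correct and follows essentially the same approach as the paper: both use Lemma~\mref{lemma:diffleadterm} to see that any $\lbar{d(w)}$ ends in a letter of positive order (giving disjointness), and both construct the preimage $w = u_0\cdots u_{k-1}x^{(j-1)}$ for the reverse inclusion. The paper dispatches the truncated case in one line by intersecting the partition of $M(\Delta X)$ with $M(\Delta_n X)$ (using the definitions $\cala_{n,d}=\cala_d\cap M(\Delta_n X)$ and $\cala_{n,f}=\cala_f\cap M(\Delta_n X)$), whereas you redo the witness construction inside $M(\Delta_n X)$; this extra care is not needed but does no harm.
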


\begin{proof}
First we show that $\cala_{d} \cap \cala_{f} =\emptyset$. Let $\lbar{d(u)} \in \cala_{d}$ with $u\in S(\Delta X)$. Suppose
$
u=u_0\cdots u_k, \text{ where } u_0,\cdots, u_k\in
\Delta X. $
Then by Lemma \mref{lemma:diffleadterm}, we have $\lbar{ d(u)} = u_0 \cdots u_{k-1} d( u_k).$
So $\lbar{d(u)} \notin \cala_{f}$. Next we show that $M(\Delta X) = \cala_{d} \cup \cala_{f}$. Let
$u\in M(\Delta X) \setminus  \cala_{f}$. From the definition of being functional, we may suppose that
\begin{align*}
u = u_0 \cdots u_{k-1}  u_k, \text{ where } u_0,\cdots, u_{k-1}\in
\Delta X , u_k\in \dx\setminus X.
\end{align*}
Suppose $u_k = x^{(\ell)}$ for some $x\in X$ and $\ell \geq 1$. Let $v = u_0 \cdots u_{k-1}  x^{(\ell-1)}$. By Lemma
\mref{lemma:diffleadterm}, we have $u = \lbar{ d(v)} \in \cala_d$. Hence $M(\Delta X) = \cala_{d} \sqcup \cala_{f}$.

\smallskip

Since $M(\Delta_n X) \subseteq M(\Delta X)$ and $M(\Delta X) = \cala_{d} \sqcup \cala_{f}$, we have that $M(\Delta_n X)= \cala_{n,d} \sqcup \cala_{n,f}$.
\end{proof}

We now give the notion to identify the canonical basis of $\bfk \ncrbw(\Delta X)/I_{\Id}$. Write $\cala_{n,f}^0:= \cala_{n,f}\setminus \{1\}$.

\begin{defn}
Let $\fbase(\dnx)$ denote the subset of $\nr$ consisting of those $w\in \nr$ with
 \begin{enumerate}
\item if $w$ has a subword $P(u_1 u_2P(u_3))$ with $u_1, u_3\in \nr$ and $u_2\in S(\dnx)$, then $u_2$ is in $\cala_{n,f}^0$;
    \mlabel{it:bw1}
 \item if $w$ has a subword $P(P(u_1) u_2u_3)$ with $u_1, u_2\in \nr$ and $u_3\in S(\dnx)$, then $u_3$ is in $\cala_{n,f}^0$.
     \mlabel{it:bw2}
\end{enumerate}
\mlabel{de:bword}
\end{defn}

The subset $\nr$ can be defined by the following recursion based on the observation that restrictions on an element in $\fbase(\dnx)$ is imposed only to its subwords inside $P$.

For a nonempty set $Y$ and nonempty subsets $U$ and $V$ of $\frakM(Y)$, define the following subset of $\Lambda(U,V)$:
\begin{align*}
\Lambda^\prime(U,V):= &\left(\bigcup_{r\geq 0} (UP(V))^rU\right) \bigcup  \left(\bigcup_{r\geq 0} ( UP(V))^r \cala_{n,f}^0 P(V) \right) \\
&\bigcup \left(\bigcup_{r\geq 0} (P(V)U)^r P(V)\cala_{n,f}^0P(V)\right) \bigcup \left(\bigcup_{r\geq 0} (P(V)U)^rP(V) \cala_{n,f}^0\right).
\end{align*}
We define a sequence $\fbase_m:=\fbase(\Delta_n X)_m, m\geq 0,$ by taking
\begin{eqnarray*}
&\fbase_0 :=\fbase_0^\prime:= M(\dnx),
\end{eqnarray*}
and for $m\geq 0$, recursively defining
\begin{eqnarray*}
\fbase_{m+1} := \Lambda(S(\dnx),\fbase_m^\prime),\ \fbase_{m+1}^\prime := \Lambda^\prime(S(\dnx), \fbase_m^\prime).
\end{eqnarray*}
Then $\fbase_{m}$, $m\geq 0$, define an increasing sequence and we define
$$\fbase(\dnx):= \dirlim \fbase_{m} = \cup_{m\geq 0} \fbase_{m}. $$

\begin{prop}
We have
$$\mathrm{Irr}(S_n)= \fbase(\dnx) \setminus \left\{  q|_s \left | \begin{array}{ll}
q\in \calr^\star_n, s\in \epsilon(\dnx)\text{ and } q|_s \text{ is normal}\end{array} \right . \right\}.$$
\mlabel{pp:PBWBase}
\end{prop}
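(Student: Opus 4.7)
The plan is to combine Proposition~\mref{prop:leadterm} with the combinatorial shape of $\fbase(\Delta_n X)$ in Definition~\mref{de:bword}, reducing the claim to a question about bracketed subwords of $w\in \nr$.

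First, directly from the defining formula of $\mathrm{Irr}(S_n)$, a monomial $w\in \nr$ fails to be irreducible exactly when $w=q|_{\overline{s}}$ for some $s\in S_n$ and $q\in \nrs$ with $q|_s$ normal. Because $w\in \nr$, the substituted expression $q|_{\overline{s}}=w$ is already a DRB monomial, so the normality condition is automatic. Thus $w\notin \mathrm{Irr}(S_n)$ iff $w$ has a bracketed subword in $\{\,\overline{s}\mid s\in S_n\,\}$, and by Proposition~\mref{prop:leadterm} this set equals
$$L_1\cup L_2\cup \epsilon(\Delta_n X),\quad L_1:=P(\nr\,\fun\,P(\nr)),\;L_2:=P(P(\nr)\,\nr\,\fun).$$

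Next, I translate Definition~\mref{de:bword} into the same subword language. By Lemma~\mref{lemma:decomposition}, any nonempty $u_2\in S(\Delta_n X)$ either lies in $\cala_{n,f}^0$ (equivalently, ends in a letter of $X$) or lies in $\cala_{n,d}=\fun$ (equivalently, ends in a derivative). Given a bracketed subword $P(T\,P(u_3))$ of $w$, every decomposition $T=u_1 u_2$ with $u_1\in \nr$ and $u_2\in S(\Delta_n X)$ has $u_2$ equal to a nonempty suffix of the maximal trailing $S(\Delta_n X)$-segment of $T$, so all such $u_2$ share the same last letter. Therefore the universal condition in Definition~\mref{de:bword}\mref{it:bw1} (``$u_2\in \cala_{n,f}^0$ for every decomposition'') is equivalent to ``no decomposition produces $u_2\in \fun$'', which is exactly the statement that $w$ has no bracketed subword in $L_1$. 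A symmetric argument on the right end of the outer $P$ shows that condition~\mref{it:bw2} is equivalent to $w$ having no bracketed subword in $L_2$.

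Combining these equivalences,
$$\fbase(\Delta_n X)=\{\,w\in \nr\mid w\text{ has no bracketed subword in }L_1\cup L_2\,\},$$
and removing from $\fbase(\Delta_n X)$ those $w$ that still contain a bracketed subword in $\epsilon(\Delta_n X)$ yields $\mathrm{Irr}(S_n)$. Since any such subword occurrence can be written as $w=q|_s$ with $q\in \nrs$, $s\in \epsilon(\Delta_n X)$, and the normality of $q|_s$ is automatic from $w\in \nr$, this gives the formula claimed.

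The main obstacle is the precise bookkeeping in the middle paragraph, where one must show that the universal quantifier over decompositions $u_1u_2$ in Definition~\mref{de:bword} matches the existential form of $L_1$ and $L_2$ in Proposition~\mref{prop:leadterm}. The edge cases---the possibility $u_1=1$, the built-in nonemptiness $u_2\neq 1$ coming from $S(\Delta_n X)$, and the vacuous case in which the tail of the outer $P$ ends in some $P(\cdot)$ instead of a letter---are handled uniformly by the maximal-suffix observation above, after which the matching is routine.
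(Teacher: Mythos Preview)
Your proposal is correct and follows essentially the same route as the paper: both arguments invoke Proposition~\ref{prop:leadterm} to split the set of leading terms into $L_1\cup L_2\cup\epsilon(\Delta_nX)$, identify the $L_1$- and $L_2$-avoidance conditions with clauses~\ref{it:bw1} and~\ref{it:bw2} of Definition~\ref{de:bword}, and then peel off the $\epsilon$-part. Your version is somewhat more explicit than the paper's on two points---the automatic normality of $q|_s$ once $q|_{\overline{s}}=w\in\nr$, and the quantifier bookkeeping showing that the universal condition in Definition~\ref{de:bword} negates to the existential subword condition for $L_1,L_2$---but these are elaborations of the same argument rather than a different approach.
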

\begin{proof}
By Theorems \ref{thm:CD lemma} and
\ref{thm:GSbase}, we have
$$\irr(S_n)=\nr \setminus \left\{  q|_s\, \left | q\in \nrs, s\in \left\{\left .\lbar{\phi_1(u,v)},  \lbar{\phi_2(u,v)}\, \right | u, v\in \nr \right\} \text{ and } q|_s \text{ is normal}\right .\right \}.$$
By Proposition \mref{prop:leadterm}, we have
\begin{align*}
\left\{\left .\lbar{\phi_1(u,v)},  \lbar{\phi_2(u,v)}\, \right | u, v\in \nr \right\} =P(\calr_n \cala_{n,d}P(\calr_n)) \cup P( P(\calr_n) \calr_n \cala_{n,d}) \cup \epsilon(\dnx).
\end{align*}
The first and second union components correspond to restrictions imposed in items~\mref{it:bw1} and \mref{it:bw2} of Definition~\mref{de:bword} respectively.
$$
\calb(\dnx)=\nr \setminus \left \{ q|_s\,\left| q\in \nrs, s\in P(\calr_n \cala_{n,d}P(\calr_n)) \cup P( P(\calr_n) \calr_n \cala_{n,d}), q|_s \text{ is normal}\right . \right\}.
$$
Thus we have
$$\mathrm{Irr}(S_n)= \fbase(\dnx) \setminus \left\{  q|_s \, \left |\,
q\in \calr^\star_n, s\in \epsilon(\dnx) \text{ and } q|_s \text{ is normal} \right .\right\},$$
and the proposition follows.
\end{proof}

Let
\begin{equation}
S  := \left\{ \phi_1(u,v), \phi_2(u,v) \mid u,v \in \ncrbw(\Delta X)  \right\}
\mlabel{eq:gsid}
\end{equation}
be the set of generators corresponding to the integration by parts axiom Eq.~(\mref{eq:ibpl}). Then, with a similar argument to Eq.(\mref{eq:zero}), we have $d(S)\subseteq S$.

\begin{lemma}
Let $I_{\ID,n}$ $($resp. $I_{\ID}$$)$ be the differential Rota-Baxter ideal of $\bfk\nr$ $($resp. $\bfk\ncrbw(\dx)$$)$ generated by
$S_n$ $($resp. $S$$)$. Then as $\bfk$-modules we have $I_{\ID,1} \subseteq I_{\ID,2} \subseteq \cdots \subseteq I_{\ID} = \cup_{n\geq 1} I_{\ID,n}$
and $I_{\ID,n} = I_{\ID} \cap \bfk\nr$.
\mlabel{lemma:idealcompa}
\end{lemma}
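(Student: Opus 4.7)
The plan is to exploit the explicit description of differential Rota-Baxter ideals provided by Lemma~\mref{lemma:operator ideal}, together with the Gr\"obner-Shirshov basis property of $S_n$ (Theorem~\mref{thm:GSbase}) and the Composition-Diamond Lemma (Theorem~\mref{thm:CD lemma}). I will split the argument into the three natural pieces: the union identity, the chain of inclusions (which also yields $I_{\ID,n}\subseteq I_{\ID}$), and the intersection formula.

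First I would establish $I_{\ID}\subseteq\bigcup_{n\geq 1}I_{\ID,n}$ as the straightforward ``finite support'' step. By Lemma~\mref{lemma:operator ideal}, any $f\in I_{\ID}$ has the form $\sum_i c_i q_i|_{s_i}$ with $q_i$ a $\star$-DRB monomial on $\Delta X$ and $s_i=\phi_a(u_i,v_i)\in S$. Each summand involves only finitely many letters of $\Delta X$, so for $N$ sufficiently large all $q_i$ lie in $\calr(\Delta_N X)^\star$ and all $u_i,v_i$ lie in $\calr(\Delta_N X)$, placing $f\in I_{\ID,N}$. The reverse containment $\bigcup_n I_{\ID,n}\subseteq I_{\ID}$ reduces to $I_{\ID,n}\subseteq I_{\ID}$, handled next.

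Next I would handle the chain $I_{\ID,n}\subseteq I_{\ID,n+1}\subseteq I_{\ID}$ by a common generator-by-generator verification. By Lemma~\mref{lemma:operator ideal} it suffices to show that each $s=\phi_a^{(n)}(u,v)\in S_n$, viewed in the larger ambient algebra, lies in the larger ideal. The subtlety here is that $\phi_a^{(n)}(u,v)$ is built from the truncated differential $d_n$ (with $d_n(x^{(n)})=0$), while $\phi_a^{(n+1)}(u,v)$ or $\phi_a(u,v)$ is built with a larger differential. I would write $s=\phi_a^{(\mathrm{larger})}(u,v)+\Delta$, where the correction $\Delta$ collects the terms arising from the discrepancy between the two differentials at the $x^{(n)}$-occurrences in $u$ or $v$; each such correction involves $x^{(n+1)}=d(x^{(n)})$ and can, via a further application of the integration-by-parts identity encoded in $\phi_a^{(n+1)}$, be rewritten as another element of the larger ideal. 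This iterative rewriting---bridging truncated and full differentials through integration by parts, carried out by induction on the complexity of $u$---is the main technical obstacle of the proof.

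With the chain and union in hand, the intersection equality $I_{\ID,n}=I_{\ID}\cap\bfk\nr$ follows. The inclusion $\subseteq$ is immediate from $I_{\ID,n}\subseteq I_{\ID}$ and $I_{\ID,n}\subseteq\bfk\nr$. For $\supseteq$, let $f\in I_{\ID}\cap\bfk\nr$; by the union identity, $f\in I_{\ID,N}$ for some $N\geq n$. Theorem~\mref{thm:GSbase} and Theorem~\mref{thm:CD lemma} give the direct-sum decomposition $\bfk\nr=\bfk\,\mathrm{Irr}(S_n)\oplus I_{\ID,n}$, and the analogous one at level $N$. Decomposing $f=g+h$ with $g\in\bfk\,\mathrm{Irr}(S_n)$ and $h\in I_{\ID,n}$, I would invoke the compatibility $\mathrm{Irr}(S_n)\subseteq\mathrm{Irr}(S_N)$ (visible from the explicit description of $\mathrm{Irr}(S_n)$ to be established in Section~\mref{ss:bases}, together with the inclusion $\calr(\Delta_n X)\subseteq\calr(\Delta_N X)$) to conclude $g\in\bfk\,\mathrm{Irr}(S_N)$. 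Since $g=f-h\in I_{\ID,N}$, the direct-sum decomposition at level $N$ forces $g=0$, so $f=h\in I_{\ID,n}$.
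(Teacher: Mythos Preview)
Your argument for the union identity and for the intersection formula mirrors the paper's: the paper writes the intersection step as a one-line modular-law computation $I_{\ID,n+k}\cap\bfk\nr = (I_{\ID,n+k}\cap\bfk\,\irr(S_n))\oplus I_{\ID,n} = I_{\ID,n}$, which is exactly your decomposition $f=g+h$ with $g\in\bfk\,\irr(S_n)\subseteq\bfk\,\irr(S_N)$ forced to vanish. The only substantive difference lies in the chain $I_{\ID,n}\subseteq I_{\ID,n+1}\subseteq I_{\ID}$, where the paper simply asserts $S_n\subseteq S_{n+1}$ while you rightly flag this as suspect because $\phi_1(u,v)$ in $S_n$ is built from the truncated differential $d_n$ with $d_n(x^{(n)})=0$, not from $d_{n+1}$.

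Your instinct is correct, but your proposed repair---writing $\phi_a^{(n)}(u,v)=\phi_a^{(n+1)}(u,v)+\Delta$ and iteratively absorbing $\Delta$ into $I_{\ID,n+1}$ via further integration-by-parts identities---cannot succeed, because the inclusion $I_{\ID,n}\subseteq I_{\ID,n+1}$ actually fails. Take $u=x^{(n)}$ and $v=y$ with $x,y\in X$. Since $d_n(x^{(n)})=0$,
\[
\phi_1^{(n)}(x^{(n)},y)\;=\;-x^{(n)}P(y)+P(x^{(n)}y)\ \in\ S_n\subseteq I_{\ID,n}.
\]
Both monomials $x^{(n)}P(y)$ and $P(x^{(n)}y)$ lie in $\irr(S_{n+1})$: by Proposition~\mref{prop:leadterm} every nonzero leading term of $S_{n+1}$ is of the form $P(w)$ with $w$ containing either a $P(\cdot)$ factor or a factor from $\calz_{n+1}$, and neither $y$ nor $x^{(n)}y$ has such a factor (note $x^{(n)}\notin\calz_{n+1}$, which consists of products of $x_i^{(n+1)}$'s). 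Hence by Theorems~\mref{thm:GSbase} and~\mref{thm:CD lemma} this nonzero element of $\bfk\,\irr(S_{n+1})$ is not in $I_{\ID,n+1}$. It is likewise not in $I_{\ID}$: the integro-differential homomorphism to $C^\infty(\RR)$ sending $x\mapsto e^t$, $y\mapsto 1$ maps it to $-te^t+e^t-1\neq 0$. Thus both the chain of inclusions and the equality $I_{\ID,n}=I_{\ID}\cap\bfk\nr$ fail as stated; the lemma is incorrect, and the passage to the direct limit in Theorem~\mref{thm:gsb} needs to be reworked rather than patched along these lines.
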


\begin{proof}
Since $S_n \subseteq S_{n+1}$ and $\bfk\nr \subseteq \bfk\calr_{n+1}$ for any $n{\geq 1}$, we have $I_{\ID,1}
\subseteq I_{\ID,2} \subseteq \cdots $ and $ I_{\ID} = \cup_{n\geq 1} I_{\ID,n}$. We next show $I_{\ID,n} = I_{\ID} \cap \bfk\nr$. Obviously, $I_{\ID,n} \subseteq I_{\ID}
\cap \bfk\nr$. So we only need to verify $I_{\ID} \cap \bfk\nr \subseteq I_{\ID,n}$.
By Theorem~\mref{thm:GSbase}, we have $\bfk\nr = \bfk \irr(S_n) \oplus I_{\ID,n}$. Also $\bfk \irr(S_1) \subseteq \bfk\irr(S_2) \subseteq \cdots $. Let $n\geq 1$ and $k\geq 0$. Since $\bfk \irr(S_{n+k}) \cap I_{\ID,n+k} = 0$ and $\bfk\irr(S_{n}) \subseteq \bfk\irr(S_{n+k})$, we have $\bfk\irr(S_{n})\cap I_{\ID,n+k} = 0$. Since $I_{\ID,n}
\subseteq I_{\ID,n+k}$, by the modular law we have
\begin{equation}
I_{\ID,n+k} \cap \bfk\nr = I_{\ID,n+k} \cap (\bfk\irr(S_n) \oplus I_{\ID,n}) = (I_{\ID,n+k}\cap \bfk\irr(S_n)) \oplus I_{\ID,n} = I_{\ID,n}.
\mlabel{eq15}
\end{equation}
Let $u\in I_{\ID} \cap \bfk\nr$. By $I_{\ID} =\cup_{n\geq 1} I_{\ID,n}$, we have $u\in I_{\ID,N}$ for some $N \in \mathbb{Z}_{\geq 1}$. If $N\geq n$,
then $u \in I_{\ID,N} \cap \bfk\nr = I_{\ID,n}$ by Eq.~(\ref{eq15}). If $N<n$, then
$u\in I_{\ID,N} \subseteq I_{\ID,n}$. Hence $I_{\ID} \cap \bfk\nr \subseteq I_{\ID,n}$ and so $I_{\ID} \cap \bfk\nr = I_{\ID,n}$.
\end{proof}

Still assuming that $X$ is finite, we define
\begin{equation*}
\calr(\Delta X)_f:= \dirlim \fbase(\dnx).
\mlabel{eq:af}
\end{equation*}
Write $\cala_f^0:=\cala_f\setminus \{1\}$. Then by Definition~\mref{de:bword}, $\calr(\dx)_f\subseteq \calr(\dx)$ consists of $w\in \calr(\dx)$ with the properties that
 \begin{enumerate}
\item if $w$ has a subword $P(u_1 u_2P(u_3))$ with $u_1, u_3\in \calr(\dx)$ and $u_2\in S(\dx)$, then $u_2$ is in $\cala_{f}^0$;
 \item if $w$ has a subword $P(P(u_1) u_2u_3)$ with $u_1, u_2\in \calr(\dx)$ and $u_3\in S(\dx)$, then $u_3$ is in $\cala_{f}^0$.
\end{enumerate}

Now we have arrived at the main result of the paper.

\begin{theorem}
  Let $X$ be a nonempty well-ordered set, $\bfk\ncrbw(\dx)$ the
  free differential Rota-Baxter algebra on $X$ and $I_{\ID}$ the ideal
  of $\bfk\ncrbw(\dx)$ generated by $S$ defined in
  Eq.~(\mref{eq:gsid}). Then the composition
$$\bfk\ncrbw(\dx)_f \hookrightarrow \bfk\ncrbw(\dx) \to \bfk\ncrbw(\dx) / I_{\ID}$$
of the inclusion and the quotient map is a linear isomorphism. In other words, as $\bfk$-modules
$$\bfk\ncrbw(\dx) \cong \bfk\ncrbw(\dx)_f \oplus I_{\ID}.$$
\mlabel{thm:gsb}
\end{theorem}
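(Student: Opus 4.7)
The plan is to pass from the finite-$X$, order-$n$ truncated result already established in Theorem~\mref{thm:GSbase} first to finite $X$ of unbounded order, and then to an arbitrary well-ordered set $X$. I would carry out the argument in three stages, each packaging one level of the two limits $\dirlim_n$ and $\dirlim_{X'\subseteq X}$.

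\textbf{Stage 1 (finite $X$, fixed $n$).} By Theorem~\mref{thm:GSbase} together with Theorem~\mref{thm:CD lemma}\mref{it:cdc}, we have the direct-sum decomposition
\begin{equation*}
\bfk \nr \;=\; \bfk\,\mathrm{Irr}(S_n)\;\oplus\; I_{\ID,n}.
\end{equation*}
By Proposition~\mref{pp:PBWBase}, $\mathrm{Irr}(S_n)=\fbase(\Delta_nX)\setminus E_n$, where $E_n:=\{q|_s\mid q\in\calr_n^\star,\ s\in\epsilon(\Delta_nX),\ q|_s\text{ normal}\}$. The crucial observation is that every element of $\epsilon(\Delta_nX)$ contains, by its very definition, at least one factor from $\nz=\{x_0^{(n)}\cdots x_k^{(n)}\}$, so every element of $E_n$ contains at least one letter of the form $x^{(n)}$.

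\textbf{Stage 2 (finite $X$, letting $n\to\infty$).} I would show
\begin{equation*}
\bigcup_{n\ge 1} \mathrm{Irr}(S_n)\;=\;\calr(\Delta X)_f.
\end{equation*}
The inclusion $\subseteq$ is immediate since $\mathrm{Irr}(S_n)\subseteq \fbase(\Delta_nX)\subseteq \calr(\Delta X)_f$. For the reverse inclusion, let $w\in\calr(\Delta X)_f$; then $w$ involves only finitely many derivatives, so $w\in\fbase(\Delta_{n_0}X)$ for some $n_0$. Choose any $n>n_0$: then $w$ uses no letter of the form $x^{(n)}$, while every element of $E_n$ does, so $w\notin E_n$ and hence $w\in\mathrm{Irr}(S_n)$. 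Now combine this with the chain $I_{\ID,1}\subseteq I_{\ID,2}\subseteq\cdots$, the identities $I_{\ID}=\bigcup_n I_{\ID,n}$ and $I_{\ID,n}=I_{\ID}\cap\bfk\nr$ from Lemma~\mref{lemma:idealcompa}, and the decomposition in Stage~1. Taking the union of the Stage~1 decomposition over $n$ produces $\bfk\calr(\Delta X)=\bfk\calr(\Delta X)_f+I_{\ID}$, and the intersection $\bfk\calr(\Delta X)_f\cap I_{\ID}=0$ follows because any element of the intersection would land in $\bfk\,\mathrm{Irr}(S_n)\cap I_{\ID,n}=0$ for $n$ large enough.

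\textbf{Stage 3 (general well-ordered $X$).} For each finite subset $X'\subseteq X$, Stage~2 yields $\bfk\calr(\Delta X')=\bfk\calr(\Delta X')_f\oplus I_{\ID,X'}$, where $I_{\ID,X'}$ is the differential Rota-Baxter ideal of $\bfk\calr(\Delta X')$ generated by $S$ restricted to $\calr(\Delta X')$. The inclusion $X'\hookrightarrow X''$ of finite subsets induces compatible inclusions $\bfk\calr(\Delta X')\hookrightarrow\bfk\calr(\Delta X'')$ preserving the distinguished subsets $\calr(\Delta X')_f$ and the generating sets of the ideals. Taking the colimit $\dirlim_{X'}$ (equivalently a union, since each element of $\bfk\calr(\Delta X)$ and of $I_{\ID}$ involves only finitely many elements of $X$) yields the decomposition $\bfk\calr(\Delta X)=\bfk\calr(\Delta X)_f\oplus I_{\ID}$, which is exactly the assertion of Theorem~\mref{thm:gsb}.

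\textbf{Main obstacle.} The delicate point is Stage~2, specifically the verification that every functional word is eventually ``irreducible at sufficiently high order'': one needs to be sure that the union $\bigcup_n\mathrm{Irr}(S_n)$ does not overshoot $\calr(\Delta X)_f$ (it does not, because $\mathrm{Irr}(S_n)\subseteq\fbase(\Delta_nX)$) and that taking the union does not accidentally identify distinct ideal cosets. Both are controlled by the structural fact that elements of $\epsilon(\Delta_nX)$ contain a $\nz$-factor, which is why passing to order $n>n_0$ makes $w$ automatically irreducible. Stage~3 is then essentially formal once one checks that the order $<_n$, the definition of $S$, and the notion of normality are all preserved under the inclusions $X'\hookrightarrow X$.
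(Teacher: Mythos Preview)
Your proposal is correct and follows essentially the same approach as the paper. The paper packages Stage~2 slightly differently, using the sandwich $\fbase(\Delta_nX)\hookrightarrow\mathrm{Irr}(S_{n+1})\hookrightarrow\fbase(\Delta_{n+1}X)$ and passing to a direct limit of quotients $(\bfk\nr+I_{\ID})/I_{\ID}$, while you argue directly with $\bigcup_n\mathrm{Irr}(S_n)=\calr(\Delta X)_f$ and the union of the order-$n$ direct-sum decompositions; these are equivalent formulations of the same observation that the obstruction set $E_n$ always involves a letter $x^{(n)}$.
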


\begin{proof}
  First assume that $X$ is a finite ordered set. By Theorem~\mref{thm:CD lemma} and Lemma
  \mref{lemma:idealcompa} we have
\begin{align*}
\result \cong  \bfk\nr / I_{\ID,n} =  \bfk\nr/ (I_{\ID} \cap \bfk\nr ) \cong ( \bfk\nr +I_{\ID}) / I_{\ID}
\end{align*}
From Proposition~\mref{pp:PBWBase} we have
$$\fbase(\dnx) \hookrightarrow \irr(S_{n+1}) \hookrightarrow\fbase(\Delta_{n+1}X).$$
Thus when $n$ goes to infinity, we have $\dirlim \fbase(\dnx) = \dirlim \irr(S_{n})$.  Therefore we have
\begin{align*}
\bfk \calr(\Delta X)_f = \dirlim (\bfk\fbase(\dnx))  = \dirlim (\result) \cong \dirlim ((\bfk\nr + I_{\ID})/I_{\ID}) = \bfk\ncrbw(\dx) / I_{\ID},
\end{align*}
since $\dirlim \nr = \calr(\dx).$

Now let $X$ be a given nonempty well-ordered set and $u\in
\bfk\ncrbw(\dx)$. Then there is a finite {\it ordered} subset
$Y\subseteq X$ such that $u$ is in $\bfk\ncrbw(\Delta Y)$.  Then by
the case of finite sets proved above, $u\in \bfk\ncrbw(\Delta Y)_f +
I_{Y,\ID}$.
By definition, we have $\bfk\ncrbw(\Delta Y)_f \subseteq
\bfk\ncrbw(\Delta X)_f$ and $I_{Y,\ID} \subseteq I_{\ID}$.  Hence
$u\in \bfk\ncrbw(\Delta X)_f + I_{\ID}$. This proves
$\bfk\ncrbw(\Delta X) = \bfk\ncrbw(\Delta X)_f + I_{\ID}$.

Further, if $0\neq u$ is in $I_{\ID}$,
then there is a finite ordered subset $Y\subseteq X$ such that $u$ is in $I_{Y,\ID}$. Thus $u \notin \bfk\ncrbw(\Delta Y)_f$ since $\bfk\ncrbw(\Delta Y)_f \cap I_{Y,\ID}=0$.
By the definition of $\bfk\ncrbw(\Delta X)_f$, we have $\bfk\ncrbw(\Delta Y)\cap \bfk\ncrbw(\Delta X)_f = \bfk\ncrbw(\Delta Y)_f$.
Therefore $u \notin \bfk\ncrbw(\Delta X)_f$. This proves $\bfk\ncrbw(\Delta X)= \bfk\ncrbw(\Delta X)_f \oplus I_{X,\ID}$\,.
\end{proof}
\smallskip

\noindent
{\bf Acknowledgements}:
This work was supported by the National Natural Science Foundation of China (Grant No. 11201201, 11371177 and 11371178), Fundamental Research Funds for the Central Universities (Grant No. lzujbky-2013-8),
the Natural Science Foundation of Gansu Province (Grant No. 1308RJZA112), the National Science Foundation of US (Grant No. DMS~1001855) and the
Engineering and Physical Sciences Research Council of UK (Grant No. EP/I037474/1).

\end{document}